\documentclass[a4paper,12pt]{article}
\usepackage[utf8]{inputenc}

\usepackage{amsmath}
\usepackage{amsthm}
\usepackage{amssymb}
\usepackage{amsfonts}
\usepackage{enumerate}
\usepackage{graphicx}
\usepackage{hyperref}

\theoremstyle{definition}
\newtheorem{teo}{Theorem}[subsection]
\newtheorem{lem}[teo]{Lemma}
\newtheorem{cor}[teo]{Corollary}
\newtheorem{oq}[teo]{Open question}
\theoremstyle{plain}
\newtheorem{defin}[teo]{Definition}
\theoremstyle{remark}

\newcommand{\sx}{\left}
\newcommand{\dx}{\right}
\newcommand{\R}{\mathbb{R}}

\newcommand{\hyp}{\textbf{H}}
\newcommand{\G}{\textbf{G}}
\newcommand{\F}{\mathbb{F}}

\newcommand{\N}{\mathbb{N}}
\renewcommand{\geq}{\geqslant}
\renewcommand{\leq}{\leqslant}
\renewcommand{\phi}{\varphi}
\newcommand{\eps}{\varepsilon}

\newcommand{\tonde}[1]{\left( #1 \right)}
\newcommand{\quadre}[1]{\left[ #1 \right]}
\newcommand{\graffe}[1]{\left\{ #1 \right\}}
\newcommand{\va}[1]{\left| #1 \right|}

\newcommand{\restr}[2]{\left. #1 \right|_{#2}}

\newcommand{\barr}[1]{\overline{#1}}
\newcommand{\wtde}[1]{\widetilde{#1}}
\newcommand{\CG}{\mathcal{C}}
\newcommand{\UF}{\mathcal{U}}

\DeclareMathOperator{\Imm}{Im}

%opening
\title{Quasi-Isometric Rigidity of Cusp-Decomposable Manifolds}
\author{Kirill Kuzmin}

\begin{document}

\maketitle

\begin{abstract}
In this paper we explore coarse properties of cusp-decomposable manifolds defined in \cite{NPcusp}. We describe the large scale geometry of the universal cover of a cusp-decomposable manifold and of quasi-isometries between two such universal covers. This description will provide us the tools to prove quasi-isometric rigidity for fundamental groups of cusp-decomposable manifolds.
\end{abstract}

\section*{Introduction}

This paper is concerned with cusp-decomposable manifolds and their quasi-isometric rigidity.

Let $n$ be an integer greater than or equal to $3$ and let $X$ be a connected, complete, non-compact and finite volume, locally symmetric, negatively curved $n$-manifold without boundary. We can perform the thin-thick decomposition on $X$; the thin part splits up in finitely many connected components called \emph{cusps}. For each cusp there is an $\tonde{n-1}$-manifold $N$ such that the cusp is diffeomorphic to $N \times \left[ 0, +\infty \right)$.

Now remove, for some constant $a>0$, each $N \times \left( a, +\infty \right)$ and call the resulting compact manifold with boundary $P$ a \emph{piece}; it is a compactification of $X$. Take any finite number of pieces, choose a pairing on a subset of boundary components such that there is an affine diffeomorphism between the paired boundary components and glue the paired boundary components along these diffeomorphisms; we will see later how to characterize affine diffeomorphisms in this setting. Two glued boundary components do not need to belong to different pieces. Suppose that the resulting manifold $M$ is connected. Then $M$ is a \emph{cusp-decomposable manifold}, according to the definition given in \cite{NPcusp}. We will always suppose there is at least one gluing; indeed, quasi-isometric rigidity results for single pieces are well known from \cite{SchwRank1}. It will turn out later that all the pieces are locally isometric to the same symmetric space.

The paper \cite{NPcusp} was concerned with smooth rigidity results for cusp-decomposable manifolds. Here we are going to explore these manifolds from the quasi-isometric point of view. To any cusp-decomposable manifold $M$ there is associated a tree constructed as follows. Let $\wtde{M}$ be the universal cover of $M$. The vertices correspond to connected components of the preimages of pieces of $M$ in $\wtde{M}$, called \emph{chambers}, and there is an edge between two vertices if the corresponding chambers intersect; we will use slightly different and more precise definitions later. The main results we will prove are the following:

\begin{teo}[Theorem \ref{chambersQuasiPreserved}, Corollary \ref{treeIso}]
Let $M_1$ and $M_2$ be two cusp-decomposable manifolds, and suppose there is a quasi-isometry $f$ between their universal covers $\wtde{M_1}$ and $\wtde{M_2}$. Then the image via $f$ of a chamber $C$ in $\wtde{M_1}$ is at a finite Hausdorff distance from a unique chamber in $\wtde{M_2}$ which is quasi-isometric to $C$. Moreover, the induced map between the vertices of the trees associated to $M_1$ and $M_2$ is the restriction of a tree isomorphism.
\end{teo}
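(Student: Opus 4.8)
The plan is to run the standard ``coarse topology of a tree of spaces'' argument. First I would establish the coarse structure of $\wtde{M_i}$ that the announced description of its large-scale geometry provides: $\wtde{M_i}$ is a tree of metric spaces over a tree $T_i$ whose vertex spaces are the chambers — each quasi-isometrically embedded in $\wtde{M_i}$, and each quasi-isometric to a neutered space of the common symmetric space — and whose edge spaces are the \emph{walls}, i.e.\ the connected components of the preimages in $\wtde{M_i}$ of the gluing hypersurfaces; each wall is isometric to $\wtde{N}$ for a cusp cross-section $N$, hence is a simply connected nilpotent Lie group, in particular amenable, of polynomial growth and, since $\dim N=n-1\ge 2$, one-ended. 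Alongside this I would record two separation facts: (a) a uniform metric neighbourhood of a wall $W$ separates $\wtde{M_i}$ into exactly two deep components, the two sides on which the chambers adjacent along $W$ lie; and (b) for every $R$ there is $R'$ such that two chambers at distance $>R'$ in $T_i$ lie at mutual distance $>R$ in $\wtde{M_i}$, so distinct chambers are never at finite Hausdorff distance. The wall--chamber incidence graph recovers $T_i$. The strategy is then: show $f$ coarsely preserves walls, deduce that it coarsely preserves chambers (this is Theorem \ref{chambersQuasiPreserved}), and read off the tree isomorphism (Corollary \ref{treeIso}).

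\emph{Walls.} Let $W$ be a wall of $\wtde{M_1}$. Then $f(W)$ is quasi-isometric to $W$, hence amenable, coarsely connected and of polynomial growth, and by fact (a) transported through $f$ a uniform neighbourhood of $f(W)$ separates $\wtde{M_2}$ into exactly two deep components. The key lemma I need — and the part I expect to be hard — is that such a subset of $\wtde{M_2}$ lies at finite Hausdorff distance from a single wall $W'$. Within one chamber this should follow from the quasi-isometric rigidity of the boundary horospheres of a neutered space, from \cite{SchwRank1}: an amenable coarse separator cannot be absorbed by the exponential divergence of a neutered space, so it is confined to a bounded neighbourhood of a boundary horosphere; the tree structure together with one-endedness of $f(W)$ should prevent $f(W)$ from spreading over several chambers or from crossing a wall transversally; and it is close to only one wall, rather than to a union of walls, because its complement has only two deep components. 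Applying the same to a quasi-inverse $\barr f$ of $f$, the assignment $W\mapsto W'$ is a bijection between the walls of $\wtde{M_1}$ and those of $\wtde{M_2}$, with all the Hausdorff distances involved bounded in terms of the quasi-isometry constants.

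\emph{Chambers.} Fix a chamber $C\subset\wtde{M_1}$. Since $M_1$ has at least one gluing and $\wtde{M_1}$ is connected, the set $\graffe{W_\alpha}$ of walls adjacent to $C$ is nonempty, and by the previous step each $f(W_\alpha)$ is Hausdorff-close to a wall $W'_\alpha$ of $\wtde{M_2}$. Transporting along $f$ the ``$C$-side'' of each $W_\alpha$ from fact (a), one sees that $f(C)$ lies in a uniform neighbourhood of the intersection of the corresponding sides of the $W'_\alpha$; by the tree structure of $T_2$ this intersection is nonempty — it contains $f(C)$ — and equals a uniform neighbourhood of a single chamber $C'$. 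Hence $d_{\mathrm{Haus}}(f(C),C')<\infty$, with a bound depending only on the quasi-isometry constants and the two manifolds. Uniqueness of $C'$ is fact (b): a set at finite Hausdorff distance from two distinct chambers would put those chambers at finite Hausdorff distance. Finally $f$ restricts to a quasi-isometry $C\to f(C)$, and $f(C)$ is at finite Hausdorff distance from $C'$, so $C$ is quasi-isometric to $C'$. This is Theorem \ref{chambersQuasiPreserved}.

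\emph{Tree isomorphism.} Let $\phi\colon V(T_1)\to V(T_2)$ send $C$ to the chamber $C'$ produced above, and let $\psi\colon V(T_2)\to V(T_1)$ be built in the same way from $\barr f$. Since $\barr f\circ f$ and $f\circ\barr f$ displace points a bounded amount, the uniqueness clause just proved forces $\psi\circ\phi=\mathrm{id}$ and $\phi\circ\psi=\mathrm{id}$, so $\phi$ is a bijection. If $C_1,C_2$ are adjacent in $T_1$, sharing a wall $W$, then the wall $W'$ Hausdorff-close to $f(W)$ belongs to the boundary-wall systems of both $\phi(C_1)$ and $\phi(C_2)$, so these are adjacent in $T_2$; the converse follows by applying the same argument to $\barr f$. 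Thus $\phi$ preserves adjacency in both directions, hence extends uniquely to an isomorphism of the trees $T_1$ and $T_2$ inducing the given vertex map, which is Corollary \ref{treeIso}. The decisive point, as noted, is the wall lemma of the second step: controlling an amenable coarse separator of $\wtde{M_2}$, ruling out that it spreads over infinitely many chambers or crosses a wall transversally, and pinning it to one wall — this is where the geometry of neutered spaces (through \cite{SchwRank1}) and a Papasoglu-type coarse-separation argument carry the real weight, the rest being bookkeeping with the tree-of-spaces structure.
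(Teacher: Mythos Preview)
Your overall architecture is reasonable and the tree-isomorphism endgame is fine, but both substantive steps take a different route from the paper, and the chamber step has a gap you are treating as bookkeeping.

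For the wall lemma the paper does not use coarse separation, amenability, or polynomial growth at all. It shows instead that $\pi_1(M)$ is hyperbolic relative to its cusp subgroups (via Dahmani's combination theorem), so every asymptotic cone of $\wtde{M}$ is tree-graded with respect to the family of asymptotic walls. Since $\tonde{\G(M),d_{CC}}$ is homeomorphic to $\R^{n-1}$ and hence has no cut-points, any bi-Lipschitz image of it in an asymptotic cone is forced into a single piece, and invariance of domain makes it an entire asymptotic wall. A short contradiction argument with rescaled basepoints then gives the uniform Hausdorff bound (Lemma~\ref{wallsGoNearWalls}). This sidesteps precisely the difficulty you flag: there is no need to argue that an amenable coarse separator in a neutered space is close to a horosphere, and \cite{SchwRank1} is not used at this stage. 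Your proposed route is in the spirit of \cite{FLSgraph} and \cite{KaLeeb3manQI} and could plausibly be pushed through, but the sentence ``an amenable coarse separator cannot be absorbed by the exponential divergence of a neutered space'' is not a statement Schwartz proves; it would need an independent argument.

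For chambers, your claim that ``by the tree structure of $T_2$ the intersection of the $C$-sides of the $W'_\alpha$ equals a uniform neighbourhood of a single chamber'' is where the real content lies, and it does not follow from tree combinatorics alone: an intersection of half-trees is a subtree, and it collapses to a single vertex only once you know that the $W'_\alpha$ are exactly the edges incident to that vertex --- which is the statement you are trying to prove. The paper handles this by first characterising the walls adjacent to a chamber as a maximal family in which no wall separates any other two (Lemma~\ref{wallsOfSameChamber}), and then proving directly that the wall bijection preserves non-separation (Lemma~\ref{wallsInSameChamberRemainInSameChamber}). That last step is genuinely geometric: for any two walls $W_1,W_2$ adjacent to $C$ and any third wall $W$, one builds a path from $W_1$ to $W_2$ that stays far from $W$, using a connectivity lemma for the complement of a horosphere-neighbourhood in a neutered space (Lemma~\ref{connComplement}). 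Your argument can be repaired --- once the wall bijection is in hand, preservation of coarse separation by quasi-isometries does give preservation of wall-separation, and then Lemma~\ref{wallsOfSameChamber} finishes --- but as written the ``sides'' paragraph assumes its conclusion.
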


\begin{teo}[Theorem \ref{HausGroupPieces}]
Let $M_1$ and $M_2$ be cusp-decomposable manifolds, and suppose there is a quasi-isometry $f$ between their fundamental groups $\pi_1 \tonde{M_1}$ and $\pi_1\tonde{M_2}$. Then the image via $f$ of the fundamental group of a piece $P_1$ in $M_1$ is at a finite Hausdorff distance from a conjugate of the fundamental group of a piece $P_2$ in $M_2$. Moreover, $\pi_1 \tonde{P_1}$ is quasi-isometric to $\pi_1 \tonde{P_2}$.
\end{teo}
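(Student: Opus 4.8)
The plan is to push the given quasi-isometry of groups to a quasi-isometry of the universal covers, apply the chamber-level statement already available from Theorem~\ref{chambersQuasiPreserved}, and then pull the conclusion back to the groups via the Milnor--\v{S}varc lemma. Since a cusp-decomposable manifold is closed, $\pi_1\tonde{M_i}$ acts properly discontinuously and cocompactly by isometries on the proper geodesic space $\wtde{M_i}$, so for a fixed basepoint $x_i \in \wtde{M_i}$ the orbit map $\iota_i \colon \pi_1\tonde{M_i} \to \wtde{M_i}$, $\gamma \mapsto \gamma \cdot x_i$, is a quasi-isometry with coarsely dense image. Fixing a coarse inverse $\barr{\iota}_1$ of $\iota_1$, the composition $\barr{f} := \iota_2 \circ f \circ \barr{\iota}_1 \colon \wtde{M_1} \to \wtde{M_2}$ is a quasi-isometry, and $\barr{f} \circ \iota_1$ stays at bounded distance from $\iota_2 \circ f$.

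Next I would fix a piece $P_1 \subseteq M_1$, a chamber $C_1 \subseteq \wtde{M_1}$ lying over $P_1$, and set $H_1 := \Stab_{\pi_1\tonde{M_1}}\tonde{C_1}$. By the gluing construction, together with the $\pi_1$-injectivity of the pieces recalled earlier, $H_1$ is a conjugate of the image of $\pi_1\tonde{P_1}$ in $\pi_1\tonde{M_1}$ and acts properly discontinuously and cocompactly by isometries on $C_1$ (equipped with its induced length metric, which is quasi-isometric to the subspace metric since chambers are coarsely convex in $\wtde{M_1}$). In particular the Milnor--\v{S}varc lemma gives a quasi-isometry $\pi_1\tonde{P_1} \cong H_1 \to C_1$, and the orbit $\iota_1\tonde{H_1} = H_1 \cdot x_1$ lies at finite Hausdorff distance from $C_1$ in $\wtde{M_1}$. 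Applying Theorem~\ref{chambersQuasiPreserved} to $\barr f$, there is a unique chamber $C_2 \subseteq \wtde{M_2}$ at finite Hausdorff distance from $\barr f\tonde{C_1}$, and $\barr f$ restricts, after coarsening, to a quasi-isometry $C_1 \to C_2$; let $P_2$ be the piece of $M_2$ over which $C_2$ lies and $H_2 := \Stab_{\pi_1\tonde{M_2}}\tonde{C_2}$ the corresponding conjugate of $\pi_1\tonde{P_2}$, with $\iota_2\tonde{H_2} = H_2 \cdot x_2$ at finite Hausdorff distance from $C_2$.

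Then I would chain the Hausdorff-distance estimates. Since $\barr f$ is a quasi-isometry it changes Hausdorff distances by a bounded multiplicative-plus-additive amount, so $\barr f\tonde{\iota_1\tonde{H_1}}$ is at finite Hausdorff distance from $\barr f\tonde{C_1}$, hence from $C_2$, hence from $\iota_2\tonde{H_2}$. Because $\barr f \circ \iota_1$ is at bounded distance from $\iota_2 \circ f$, this gives that $\iota_2\tonde{f\tonde{H_1}}$ is at finite Hausdorff distance from $\iota_2\tonde{H_2}$ in $\wtde{M_2}$; and since $\iota_2$ is a quasi-isometric embedding, pulling back yields that $f\tonde{H_1}$ is at finite Hausdorff distance from $H_2$ in $\pi_1\tonde{M_2}$. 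As $H_1$ is a conjugate of $\pi_1\tonde{P_1}$ and $H_2$ a conjugate of $\pi_1\tonde{P_2}$, this is the first assertion. For the ``moreover'' part, compose the quasi-isometries $\pi_1\tonde{P_1} \cong H_1 \to C_1 \to C_2 \to H_2 \cong \pi_1\tonde{P_2}$, where the middle arrow is the one supplied by Theorem~\ref{chambersQuasiPreserved} and the outer two are the Milnor--\v{S}varc orbit maps.

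The only points needing care are, first, the structural fact that $H_1 = \Stab\tonde{C_1}$ really is a conjugate of $\pi_1\tonde{P_1}$ acting geometrically on $C_1$ --- which follows from the description of the gluing together with the $\pi_1$-injectivity of the pieces --- and, second, the bookkeeping that translates ``finite Hausdorff distance of orbits in $\wtde{M_i}$'' into ``finite Hausdorff distance of subsets of $\pi_1\tonde{M_i}$'', which is routine given that the orbit maps are quasi-isometries and that the finite-Hausdorff-distance relation is preserved, in both directions, under quasi-isometries. The substantive geometric input --- that chambers are coarsely preserved and pairwise quasi-isometric --- has already been extracted in Theorem~\ref{chambersQuasiPreserved}, so no further hard work is needed here.
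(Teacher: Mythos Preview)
Your proposal is correct and follows essentially the same route as the paper: pass to the universal covers via Milnor--\v{S}varc, invoke Theorem~\ref{chambersQuasiPreserved} to match chambers, and translate back to cosets/conjugates in the groups. Two small remarks: a cusp-decomposable manifold need not be closed (unglued cusps give boundary components), but it is compact, which is all Milnor--\v{S}varc requires; and since you conclude for the stabilizer $H_1$, which is only a conjugate of $\pi_1\tonde{P_1}$, you should either choose $C_1$ so that $H_1=\pi_1\tonde{P_1}$ (always possible) or remark that the embedded image of $\pi_1\tonde{P_1}$ in $\pi_1\tonde{M_1}$ is itself only defined up to conjugacy --- the paper handles this by picking the basepoint inside $P$ and then passing from the coset $g\pi_1\tonde{P_i}$ to the conjugate $g\pi_1\tonde{P_i}g^{-1}$ via the bounded-right-shift observation.
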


\begin{cor}
Commensurability classes of pieces in $M_1$ are the same as commensurability classes for pieces in $M_2$.
\end{cor}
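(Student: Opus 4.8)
The plan is to deduce this from Theorem \ref{HausGroupPieces} together with Schwartz's quasi-isometric rigidity of non-uniform rank-one lattices \cite{SchwRank1}. Recall that every piece $P$ is a compactification of a complete, finite volume, negatively curved, locally symmetric $n$-manifold $X$ with $n \geq 3$, so $\pi_1(P)$ is isomorphic to $\pi_1(X)$, a non-uniform lattice in the rank-one semisimple Lie group $\mathrm{Isom}(\wtde{X})$, where $\wtde{X}$ is a symmetric space of dimension at least $3$; in particular $\mathrm{Isom}(\wtde{X})$ is not locally isomorphic to $\mathrm{Isom}\tonde{\mathbb{H}^2}$, which is the hypothesis required for Schwartz's theorem. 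Moreover $X$, and hence $P$, is aspherical, so $\pi_1(P)$ is torsion-free; and two pieces are commensurable precisely when their fundamental groups are abstractly commensurable, so I identify the commensurability class of a piece with that of $\pi_1(P)$.

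First I fix a quasi-isometry $f \colon \pi_1(M_1) \to \pi_1(M_2)$ and a piece $P_1$ of $M_1$. By Theorem \ref{HausGroupPieces} there is a piece $P_2$ of $M_2$ such that $\pi_1(P_1)$ is quasi-isometric to $\pi_1(P_2)$. Let $G_1 = \mathrm{Isom}(\wtde{X_1})$ be the ambient rank-one group of the lattice $\pi_1(P_1)$, and apply \cite{SchwRank1} with $\pi_1(P_1) < G_1$ as model non-uniform lattice and $\pi_1(P_2)$ as the quasi-isometric finitely generated group: the conclusion is that, modulo a finite normal subgroup, $\pi_1(P_2)$ is a non-uniform lattice in $G_1$ commensurable with $\pi_1(P_1)$. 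Since $\pi_1(P_2)$ is torsion-free it has no nontrivial finite normal subgroup, so $\pi_1(P_2)$ is itself such a lattice; in particular $\pi_1(P_1)$ and $\pi_1(P_2)$ share a finite-index subgroup and so are abstractly commensurable. Thus the commensurability class of $P_1$ appears among the commensurability classes of the pieces of $M_2$.

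Finally, a quasi-inverse $\barr{f}$ of $f$ is a quasi-isometry $\pi_1(M_2) \to \pi_1(M_1)$, so running the same argument with $\barr{f}$ shows that the commensurability class of every piece of $M_2$ appears among those of the pieces of $M_1$. The two inclusions give the claimed equality of the two sets of commensurability classes. Beyond correctly quoting Theorem \ref{HausGroupPieces} and \cite{SchwRank1}, the only point requiring genuine care is the passage from Schwartz's weak-commensurability conclusion to honest abstract commensurability of the two lattices; this is exactly where torsion-freeness of the fundamental group of a manifold is used, and it is the main (though mild) obstacle.
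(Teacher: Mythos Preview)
Your argument is correct and is exactly the route the paper intends: the corollary is stated in the introduction as an immediate consequence of Theorem \ref{HausGroupPieces}, and the paper explicitly points to Schwartz's rigidity \cite{SchwRank1} as the bridge from quasi-isometry of pieces to virtual isomorphism (hence commensurability). Your handling of the finite normal kernel via torsion-freeness of manifold fundamental groups is the right way to upgrade Schwartz's conclusion to honest abstract commensurability, so there is nothing to add.
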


\begin{teo}[Theorem \ref{main}]
Let $\Gamma$ be a group quasi-isometric to the fundamental group of a cusp-decomposable manifold $M$. Then $\Gamma$ is virtually isomorphic to the fundamental group of a ``cusp-decomposable orbifold'', an object constructed similarly to a cusp-decomposable manifold with the difference that pieces are obtained from finite volume locally symmetric orbifolds.
\end{teo}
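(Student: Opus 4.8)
The plan is to transport the tree-of-chambers structure of $\wtde{M}$ onto $\Gamma$, rigidify the resulting graph of groups piece by piece, and then realize it geometrically. Since $\Gamma$ is quasi-isometric to $\pi_1(M)$, which acts geometrically on $\wtde{M}$, fix a quasi-isometry $f\colon \Gamma \to \wtde{M}$; conjugating the left-translation action of $\Gamma$ on itself by $f$ yields a cobounded, metrically proper quasi-action of $\Gamma$ on $\wtde{M}$ by quasi-isometries with uniform constants. By Corollary~\ref{treeIso} each such self-quasi-isometry permutes chambers up to finite Hausdorff distance and induces an automorphism of the associated tree $T$, and uniqueness of the nearby chamber makes $\gamma \mapsto (\text{induced automorphism})$ a homomorphism $\rho\colon \Gamma \to \Aut(T)$. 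Coboundedness gives that $\rho(\Gamma)$ has finitely many orbits of vertices and of edges; a short argument shows $\ker\rho$ is finite, since an element of the kernel has a quasi-action fixing a dense set of points at infinity of every chamber together with all of that chamber's boundary walls, which forces it to be uniformly close to the identity, whereupon metric properness bounds the kernel. Bass--Serre theory (after subdividing $T$ to remove inversions) then expresses $\Gamma$ as the fundamental group of a finite graph of groups $\mathcal{G}$ over $\Gamma\backslash T$, with vertex groups $\Gamma_v = \Stab_\Gamma(v)$ and edge groups $\Gamma_e = \Stab_\Gamma(e)$, all containing the finite group $\ker\rho$.

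Next I would identify the vertex and edge groups. A vertex group $\Gamma_v$ quasi-acts coboundedly and properly on the chamber $C_v$, hence is quasi-isometric to $C_v$; by Theorem~\ref{chambersQuasiPreserved}, $C_v$ is quasi-isometric to $\pi_1(P_v)$ for a piece $P_v$, and truncating cusps does not change $\pi_1$, so $\pi_1(P_v) = \pi_1(X_v)$ is a non-uniform lattice $\Lambda_v$ in the rank-one simple Lie group $G_v = \mathrm{Isom}(\wtde{X_v})$ (with $n\geq 3$ we are never in the excluded case $\mathrm{PSL}_2(\mathbb{R})$). By Schwartz's quasi-isometric rigidity of non-uniform rank-one lattices \cite{SchwRank1}, $\Gamma_v$ is virtually isomorphic to $\Lambda_v$, hence virtually the fundamental group of a finite-volume, negatively curved, locally symmetric orbifold $\mathcal{O}_v$; moreover, since the edges at $v$ correspond to a $\Gamma_v$-invariant family of boundary walls of $C_v$, each coarsely a horosphere of $\wtde{X_v}$, Schwartz's description of the quasi-isometry identifies this family with the cusps of $\mathcal{O}_v$, so the virtual isomorphism carries the collection $\{\Gamma_e : e \ni v\}$, up to finite index and conjugacy, onto the peripheral (parabolic) subgroups of $\pi_1(\mathcal{O}_v)$. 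In particular each $\Gamma_e$ is virtually nilpotent, quasi-isometric to the cusp cross-section $N_e$ it shadows, and its two edge-to-vertex inclusions $\Gamma_e \hookrightarrow \Gamma_{v_\pm(e)}$ land virtually inside parabolic subgroups on either side.

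Then comes the reconstruction. Passing to a finite-index subgroup $\Gamma' \leq \Gamma$ (permissible, since the conclusion is only up to virtual isomorphism) and quotienting by the finite kernel, one arranges that: every vertex group of the induced decomposition is honestly $\pi_1(\mathcal{O}_v)$ for a finite-volume locally symmetric orbifold with its cusps truncated; every edge group is honestly $\pi_1$ of a cusp cross-section common to $\mathcal{O}_{v_+(e)}$ and $\mathcal{O}_{v_-(e)}$; and each edge monomorphism is an isomorphism onto a full peripheral subgroup on each side. An isomorphism between the peripheral subgroups of two truncated cusps of locally symmetric orbifolds is, after a further uniform finite-index passage matching orientations and killing the finite normalizer ambiguity, induced by an affine diffeomorphism of the corresponding infra-nilmanifolds---precisely the class of gluing maps allowed in the definition of a cusp-decomposable orbifold. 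Gluing the orbifolds $\{\mathcal{O}_v\}$ along $\Gamma'\backslash T$ by these affine diffeomorphisms produces a cusp-decomposable orbifold $\mathcal{O}$ whose fundamental group, computed as the fundamental group of the induced graph of groups, is isomorphic to $\Gamma'$, so $\Gamma$ is virtually isomorphic to $\pi_1(\mathcal{O})$.

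The main obstacle is concentrated in the reconstruction step, for two linked reasons. First, quasi-isometric rigidity for nilpotent groups is weak---a group quasi-isometric to a nilpotent group need only be virtually nilpotent with the same real Malcev completion---so the precise isomorphism type of an edge group $\Gamma_e$ cannot be pinned down in isolation, but only through the constraint that it is simultaneously commensurable with a parabolic subgroup of each of the two adjacent rigidified vertex groups; verifying that this doubly-peripheral data is consistent and can be made exact by a single finite-index passage performed coherently over all vertices and edges is the technical heart of the argument. Second, realizing each edge identification by an affine diffeomorphism, compatibly with the graph structure and with self-gluings (edges whose two endpoints coincide), requires careful control of the normalizers of lattices in the relevant nilpotent Lie groups and of orientation issues. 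By contrast, the construction of the quasi-action, the action on $T$, the finiteness of $\ker\rho$, the graph-of-groups decomposition, and the rank-one rigidity of the vertex groups are comparatively routine given Corollary~\ref{treeIso}, Theorem~\ref{chambersQuasiPreserved}, and \cite{SchwRank1}.
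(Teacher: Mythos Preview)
Your overall architecture matches the paper's: quasi-action on $\wtde{M}$, induced action on the Bass--Serre tree $T$, finite quotient graph, Schwartz rigidity on vertex stabilizers, then realize the graph of groups as a cusp-decomposable orbifold. The divergence is precisely at the step you flag as the main obstacle, and the paper handles it differently in a way that dissolves the difficulty rather than confronting it.

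You extract from Schwartz only the \emph{virtual isomorphism} $\Gamma_{\wtde v}\sim\Lambda_{\wtde v}$ and then try to pass to a further finite-index subgroup $\Gamma'\leq\Gamma$ to make all vertex and edge groups honest orbifold groups simultaneously; you correctly note that the coherence of this passage over the whole graph is delicate and not obviously possible. The paper instead uses the full strength of \cite[\S10.4]{SchwRank1}: the quasi-action of $\Gamma_{\wtde v}$ on the chamber $C_{\wtde v}$ is at bounded distance from an \emph{actual isometric action} of $\Gamma_{\wtde v}/F_{\wtde v}$ on a neutered space $\mathcal N_{\wtde v}$, where $F_{\wtde v}$ is the unique maximal finite normal subgroup of $\Gamma_{\wtde v}$. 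The crucial observation, which replaces your coherence argument entirely, is that for an edge $\wtde e$ with endpoints $\wtde v,\wtde v'$ the quotient $\Gamma_{\wtde e}/F_{\wtde v}$ acts faithfully, properly discontinuously and cocompactly on a horosphere, hence is almost-crystallographic and by \cite[Theorem~4.2]{Dekimpe} has no nontrivial finite normal subgroup; since $F_{\wtde v'}\subseteq\Gamma_{\wtde e}$ this forces $F_{\wtde v}=F_{\wtde v'}$. Connectedness of $T$ then gives a single finite normal $F\trianglelefteq\Gamma$ equal to the kernel of the action on $T$, and $\Gamma/F$ (no further finite-index passage) is already the fundamental group of the desired graph of groups with vertex groups acting isometrically on neutered spaces. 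The gluings come for free from Lemma~\ref{isoFromAff}: any isomorphism of almost-crystallographic groups is induced by an affine homeomorphism of the covering nilpotent Lie groups, so your worry about realizing edge identifications affinely, and about weak QI-rigidity of nilpotent groups, never arises.

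In short: do not discard the isometric action that Schwartz hands you, and use the absence of finite normal subgroups in almost-crystallographic groups to propagate the finite kernel across $T$. Your separate early argument that $\ker\rho$ is finite is fine but becomes redundant.
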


\begin{teo}[Theorem \ref{corMain}]
The class of orbifold fundamental groups of cusp-decomposable orbifolds is quasi-isometrically rigid, i.e. every group quasi-isometric to an orbifold fundamental group of a cusp-decomposable orbifold is actually virtually isomorphic to the fundamental group of a, maybe different, cusp-decomposable orbifold.
\end{teo}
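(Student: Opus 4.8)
The plan is to deduce Theorem \ref{corMain} from Theorem \ref{main} by showing that the orbifold fundamental group of any cusp-decomposable orbifold is virtually the fundamental group of an honest cusp-decomposable \emph{manifold}. So let $\mathcal{O}$ be a cusp-decomposable orbifold and let $\Gamma$ be quasi-isometric to $\pi_1^{orb}(\mathcal{O})$. The key step is:

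\textbf{Claim.} $\mathcal{O}$ admits a finite orbifold cover $M \to \mathcal{O}$ with $M$ a cusp-decomposable manifold.

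Once the claim is proved, $\pi_1(M)$ sits in $\pi_1^{orb}(\mathcal{O})$ with finite index, hence is quasi-isometric to it; therefore $\Gamma$ is quasi-isometric to $\pi_1(M)$, and Theorem \ref{main} then gives that $\Gamma$ is virtually isomorphic to $\pi_1^{orb}(\mathcal{O}')$ for some cusp-decomposable orbifold $\mathcal{O}'$ — which is precisely the content of Theorem \ref{corMain}.

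To prove the claim I would use the graph-of-groups structure of $\pi_1^{orb}(\mathcal{O})$: the vertex groups are (truncations of) the lattices $\Lambda_i$ with $\mathcal{O}$'s pieces equal to $V_i/\Lambda_i$, and the edge groups are the cusp cross-section subgroups, which are virtually nilpotent. Each $\Lambda_i$ is virtually torsion-free by Selberg's lemma, and, arguing exactly as for cusp-decomposable manifolds in \cite{NPcusp} (residual finiteness of such graphs of groups, via separability of the virtually nilpotent edge groups inside the rank-one lattices), $\pi_1^{orb}(\mathcal{O})$ is residually finite; since by Bass--Serre theory its torsion elements lie in finitely many conjugacy classes of subgroups of the $\Lambda_i$, it admits a torsion-free finite-index normal subgroup $\Gamma_0$. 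The cover $M$ of $\mathcal{O}$ corresponding to $\Gamma_0$ is then a manifold, and the induced splitting of $\Gamma_0$ exhibits $M$ as cusp-decomposable: every vertex group of $\Gamma_0$ is a torsion-free finite-index subgroup of some $\Lambda_i$, hence the fundamental group of a finite-volume, negatively curved, locally symmetric manifold — so its piece is of the required type; the edge groups remain cusp cross-sections; the gluing diffeomorphisms stay affine, since affineness passes to covers; and at least one gluing survives because $\mathcal{O}$ had one.

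The main obstacle is the claim, and specifically the passage from ``each piece is virtually a manifold'' to ``the whole orbifold is virtually a cusp-decomposable manifold'': one must produce a single finite-index subgroup that is torsion-free globally, not merely on each piece, and whose associated cover still satisfies the cusp-decomposable axioms — in particular the affine-gluing condition. This is a standard but delicate residual-finiteness argument of the type already carried out in \cite{NPcusp} for the smooth-rigidity results, adapted to orbifold pieces; modulo it, Theorem \ref{corMain} is a formal consequence of Theorem \ref{main}.
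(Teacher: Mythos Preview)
Your strategy is \emph{not} the paper's, and it rests on a step that the paper explicitly flags as unresolved. The paper's proof of Theorem \ref{corMain} is the one-line ``One can easily adapt the arguments of this work'': since the orbifold universal cover of a cusp-decomposable orbifold is built from neutered spaces glued along horospheres exactly as in the manifold case, and since Schwartz's theorem in \cite{SchwRank1} is already stated for lattices (not just torsion-free ones), every argument in Sections 2 and 3 goes through verbatim with ``orbifold fundamental group'' in place of ``fundamental group''. No reduction to the manifold case is needed.

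Your reduction, by contrast, hinges on the Claim that every cusp-decomposable orbifold has a finite cover which is a cusp-decomposable manifold. This is precisely the third Open Question stated in the Introduction, so you cannot invoke it as ``standard but delicate''. The sketch you give --- Selberg on each vertex group, then a global residual-finiteness/separability argument to assemble a single torsion-free finite-index subgroup --- is plausible in outline, but the separability of cusp subgroups inside arbitrary non-uniform rank-one lattices, and the verification that the induced cover is again cusp-decomposable with \emph{affine} gluings, are exactly the points the author declines to assert. Until that open question is settled, your route does not yield a proof; the paper's route of rerunning the whole argument for orbifolds does.
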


Similar results were already proven for different classes of manifolds. In \cite{KaLeeb3manQI}, Kapovich and Leeb proved that quasi-isometries detect the JSJ decomposition of Haken $3$-manifolds, and showed that the class of fundamental groups of Haken $3$-manifolds is quasi-isometrically rigid. In \cite{FLSgraph}, Frigerio, Lafont and Sisto considered generalized graph manifolds. As in our case, these manifolds are built up from pieces glued along affine diffeomorphisms of their boundary components. Pieces are products of finite volume non-compact hyperbolic manifolds by tori. To prove results on quasi-isometries of universal covers of two generalized graph-manifolds an additional geometric hypothesis is needed, called ``irreducibility'' by the authors. In both papers a result of the type ``chambers go near to chambers'' was proven, that allowed to characterize groups quasi-isometric to the universal covers of the manifolds considered. Many of the ideas of this paper can be tracked back to these two works. We refer the reader to \cite{FrigSurvey} for a survey on quasi-isometric rigidity for fundamental groups of manifolds that decompose in geometric pieces.

The already cited work by Schwartz, which is heavily used in the proof of the main theorem, proves a stronger result for pieces: a group quasi-isometric to the orbifold fundamental group of a complete non-compact finite volume negatively curved locally symmetric orbifold is actually virtually isomorphic to it. So a natural question is the following:

\begin{oq}
Is a group quasi-isometric to the fundamental group of a cusp-decomposable manifold, or the orbifold fundamental group of a cusp-decomposable orbifold, actually virtually isomorphic to it?
\end{oq}

Other questions are related to how much the class of orbifold fundamental groups of cusp-decomposable orbifolds is larger than the class of fundamental groups of cusp-decomposable manifolds.

\begin{oq}
Consider the orbifold fundamental group of a cusp-decomposable orbifold. Is it necessarily virtually isomorphic, or at least quasi-isometric, to the fundamental group of a cusp-decomposable manifold?
\end{oq}

An affirmative answer would follow from the affirmative answer to the following question, more geometric in nature.

\begin{oq}
Does a cusp-decomposable orbifold necessarily have a finite cover which is a cusp-decomposable manifold?
\end{oq}

\section*{Aknowledgements}

The content of this paper is part of the upcoming PhD thesis I am writing under the supervision of Roberto Frigerio. I would like to thank him for the many inspiring conversations on the topics discussed here.

\section{Preliminaries}

We recall here some useful tools and set the notation.

\subsection{Notation}

The letter $e$ will usually denote the identity of a group $G$. If $g$ is an element of a group, we will denote by $g$ also the left multiplication by it, which may have additional structure, e.g. of a diffeomorphism for a Lie group. If $G$ acts on a set, the action of $g \in G$ will be denoted by $g \cdot$.

The letter $d$ will always denote the distance function in a metric space. By this letter we will also denote the distance $d \tonde{a,A}$ of a point $a$ from a subset $A$ in a metric space, and also the infimum $d \tonde{A_1,A_2}$ of distances between points $p_1 \in A_1$ and $p_2 \in A_2$, not to be confused with the Gromov-Hausdorff distance between $A_1$ and $A_2$. The $R$-neighbourhood of a subset $A$, for some non-negative real number $R$, is the set of all points at a distance at most $R$ from $A$.

The length of a piecewise smooth curve with respect to a given Riemannian metric will be denoted by $\ell$. Sequences indicized by $\N$ will be often denoted by $\tonde{a_i}$ rather than by $\tonde{a_i}_{i \in \N}$.

\subsection{Large scale geometry}

Let $E$, $F$ be metric spaces. A function $f \colon E \to F$ is a \emph{$\tonde{K,c}$-quasi-isometric embedding} for some costants $K \geq 1$ and $c \geq 0$ if the following inequality
\[ K^{-1}d\tonde{p,q} - c \leq d \tonde{f \tonde{p}, f \tonde{q}} \leq Kd\tonde{p,q} + c \]
holds for every $p$, $q$ in $E$.

A quasi-isometric embedding is said to be a \emph{quasi-isometry} if one of the following two equivalent conditions holds:

\begin{itemize}
\item \emph{Existence of a quasi-inverse}: there exists a quasi-isometric embedding $g \colon F \to E$ such that the distances $d \tonde{x, g \circ f \tonde{x}}$ and $d \tonde{y, f \circ g \tonde{y}}$ are uniformly bounded;
\item \emph{Quasi-surjectivity}: There exists a constant $D$ such that any point in $F$ is at a distance at most $D$ from some point in $f \tonde{E}$. In this case we say that $f \tonde{E}$ is $D$-dense in $F$.
\end{itemize}

Let $\Gamma$ be a group generated by a finite set $S$; we will assume for simplicity that $x$ belongs to $S$ if and only if $x^{-1}$ does. There is a standard way to put a meaningful distance function on $\Gamma$.

\begin{defin}
The \emph{Cayley graph} $\CG \tonde{\Gamma,S}$ of $\Gamma$ with respect to the generating set $S$ is a graph vith vertex set $\Gamma$ and an arc between $g$ and $h$ if and only if $g^{-1}h$ is in $S$.

A distance function on the Cayley graph is obtained by setting the length of any edge equal to $1$ and by taking the induced path distance; the restriction of this distance to $\Gamma$ is called \emph{word metric}, in fact $d\tonde{g,h}$ is the minimum length of a word of elements of $S$ representing $g^{-1}h$.
\end{defin}

The precise distance obviously depends on the generating set, but all Cayley graphs for a fixed group, and hence all word metrics, are quasi-isometric.

Quasi-isometries do not distinguish a finitely generated group from a finitely generated subgroup of finite index. Also they do not distinguish a finitely generated group from any of its quotients by finite normal subgroups. We sum up these facts in the following

\begin{defin}
Two finitely generated groups $G$ and $H$ are \emph{virtually isomorphic} if there is a finite sequence $G=G_0$, $G_1$, \dots, $G_m=H$ of finitely generated groups such that for any $i=1$, \dots, $m$ the groups $G_{i-1}$ and $G_i$ are obtained one from the other either by taking a finite index subgroup or by extension by a finite group.
\end{defin}

This means that virtually isomorphic groups cannot be distinguished by quasi-isometries.

Groups can act on metric spaces by quasi-isometries.

\begin{defin}
Fix non-negative real constants $K \geq 1$, $c$ and $D$. A \emph{$\tonde{K,c,D}$-quasi-action by quasi-isometries} of a group $\Gamma$ on a metric space $E$ is a map $\rho$ from $\Gamma$ to the set of quasi-isometries of $E$ such that:
\begin{itemize}
\item Every quasi-isometry in the image of $\Gamma$ is a $\tonde{K,c}$-quasi-isometry with $D$-dense image;
\item If $e$ is the identity element of $H$, the distances $d \tonde{x, \rho \tonde{e} \tonde{x}}$ are uniformly bounded;
\item For any $g$, $h$ in $\Gamma$, the distances $d \tonde{\rho\tonde{gh}\tonde{x},\rho \tonde{g}\tonde{\rho\tonde{h}\tonde{x}}}$ are uniformly bounded.
\end{itemize}
The quasi-action is said to be cobounded if, equivalently, the orbit of some point is $D'$-dense for some $D'$ or the orbit of \emph{any} point is $D''$-dense for some $D''$.
\end{defin}

An easy example of a cobounded quasi-action is the following (see \cite[Lemma 5.60]{DKlecGGT}):

\begin{lem}\label{QItoQIaction}
Suppose $\Gamma$ is a finitely generated group, $E$ is a metric space, $f \colon \Gamma \to E$ is a quasi-isometry and $\phi \colon E \to \Gamma$ is a quasi-inverse of $f$. Then the map that sends $g$ to the quasi-isometry $f \tonde{g\phi\tonde{\cdot}}$ is a cobounded quasi-isometric quasi-action.
\end{lem}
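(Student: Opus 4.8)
The plan is to unwind the definitions: set $\rho\tonde{g}\tonde{x} = f\tonde{g\,\phi\tonde{x}}$ as in the statement (using the paper's convention that $g$ also denotes left multiplication by $g$, which is an isometry of $\Gamma$ for the word metric), and then verify the three axioms of a quasi-action together with coboundedness, all by chasing quasi-isometry constants. First I would fix notation. Let $f$ be a $\tonde{K,c}$-quasi-isometric embedding; recall that, by definition, a quasi-inverse is itself a quasi-isometric embedding, so let $\phi$ be a $\tonde{K',c'}$-one, and let $D \geq 0$ be a constant with $d\tonde{g,\phi\tonde{f\tonde{g}}} \leq D$ for all $g \in \Gamma$ and $d\tonde{y,f\tonde{\phi\tonde{y}}} \leq D$ for all $y \in E$.

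Next I would check that each $\rho\tonde{g}$ is a quasi-isometry of $E$, with constants not depending on $g$. Being a composition of the quasi-isometric embeddings $\phi$ and $f$ with the isometry $x \mapsto g x$ in between, $\rho\tonde{g}$ is a $\tonde{KK', Kc'+c}$-quasi-isometric embedding. To see that its image is uniformly dense, given $y \in E$ I would test the point $x = f\tonde{g^{-1}\phi\tonde{y}}$: then $\phi\tonde{x}$ is $D$-close to $g^{-1}\phi\tonde{y}$, hence $g\,\phi\tonde{x}$ is $D$-close to $\phi\tonde{y}$, hence $\rho\tonde{g}\tonde{x} = f\tonde{g\,\phi\tonde{x}}$ is $\tonde{KD+c}$-close to $f\tonde{\phi\tonde{y}}$, which is in turn $D$-close to $y$; so the image of $\rho\tonde{g}$ is $\tonde{KD+c+D}$-dense in $E$. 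Thus $\rho$ indeed takes values in the quasi-isometries of $E$, all of them $\tonde{KK',Kc'+c}$-quasi-isometries with $\tonde{KD+c+D}$-dense image.

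Then I would dispatch the two algebraic conditions. For the identity, $\rho\tonde{e}\tonde{x} = f\tonde{\phi\tonde{x}}$ is $D$-close to $x$, uniformly in $x$. For products, $\phi\tonde{f\tonde{h\,\phi\tonde{x}}}$ is $D$-close to $h\,\phi\tonde{x}$, so $g\,\phi\tonde{f\tonde{h\,\phi\tonde{x}}}$ is $D$-close to $gh\,\phi\tonde{x}$ because left multiplication by $g$ is an isometry, and applying $f$ yields $d\tonde{\rho\tonde{g}\tonde{\rho\tonde{h}\tonde{x}}, \rho\tonde{gh}\tonde{x}} \leq KD + c$, uniformly in $x$, $g$, $h$. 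Hence $\rho$ is a $\tonde{KK', Kc'+c, KD+c+D}$-quasi-action by quasi-isometries. Finally, coboundedness is immediate: for any basepoint $y_0 \in E$ the orbit $\graffe{\rho\tonde{g}\tonde{y_0} : g \in \Gamma}$ is exactly $f\tonde{\Gamma}$, since $g \mapsto g\,\phi\tonde{y_0}$ ranges over all of $\Gamma$, and $f\tonde{\Gamma}$ is $D$-dense in $E$ by the inequality $d\tonde{y, f\tonde{\phi\tonde{y}}} \leq D$.

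I do not expect a genuine obstacle here: the whole argument is bookkeeping of quasi-isometry constants, and the only inputs beyond the definitions are that left translations are isometries of $\Gamma$, that compositions of quasi-isometric embeddings are quasi-isometric embeddings with controlled constants, and that the quasi-inverse $\phi$ is automatically a quasi-isometric embedding — the latter being part of the very definition of quasi-inverse used here, so no auxiliary lemma is required.
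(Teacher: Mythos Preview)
Your proof is correct and is exactly the standard verification one would expect; the paper itself does not prove this lemma but simply cites \cite[Lemma 5.60]{DKlecGGT}, so there is nothing further to compare.
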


A useful example of quasi-isometry is provided by the following result which we will not state in full generality.

\begin{teo}[Milnor-\v{S}varc Lemma]
Let $M$ be a smooth compact manifold, possibly with boundary. Fix a Riemannian metric on it and then lift it to the universal cover $\wtde{M}$, and let $d$ be the associated distance function. Fix also a point $x_0$ in $\wtde{M}$. Then $\pi_1 \tonde{M}$ is finitely generated and the map from $G$ to $\wtde{M}$ sending $g$ to $g \cdot x_0$, where the action is by deck transformations, is a quasi-isometry between $G$ with any word metric and $\tonde{\wtde{M},d}$.
\end{teo}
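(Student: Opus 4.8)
The plan is to deduce this as an instance of the general Milnor--\v{S}varc lemma for geometric actions: first set up the geometric input, then run the standard orbit-comparison argument.

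\emph{Setup.} I would first note that the chosen Riemannian metric lifts to a length metric $d$ on $\wtde M$, and that $\tonde{\wtde M, d}$ is a proper geodesic space: it is locally compact (being a manifold, possibly with boundary), and it is complete because $\tonde{M,d}$ is complete (it is compact) and completeness passes to covering spaces equipped with the induced length metric; Hopf--Rinow for length spaces then yields properness and the existence of shortest paths. Write $G := \pi_1\tonde M$; it acts on $\wtde M$ by deck transformations, which preserve $d$, so the action is by isometries, it is properly discontinuous (as always for a universal covering), and it is cocompact since $\wtde M / G = M$ is compact. Because $d$ induces the manifold topology and $M$ is compact, I fix $R > 0$ with $\wtde M = \bigcup_{g \in G} g \cdot \barr B \tonde{x_0, R}$.

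\emph{Finiteness and the two estimates.} Put $S := \graffe{g \in G : d \tonde{x_0, g \cdot x_0} \leq 2R + 1}$. Since $\barr B \tonde{x_0, 2R+1}$ is compact (properness) and the orbit $G \cdot x_0$ is discrete with $g \mapsto g \cdot x_0$ injective (the deck action is free and properly discontinuous), $S$ is finite. If $g = s_1 \cdots s_n$ with $s_i \in S$, telescoping along the orbit and using that each $s_i$ acts by an isometry gives $d\tonde{x_0, g \cdot x_0} \leq \sum_{i=1}^n d\tonde{x_0, s_i \cdot x_0} \leq \tonde{2R+1}\,\va{g}_S$. Conversely, given $g$, I choose a path from $x_0$ to $g \cdot x_0$ of length at most $d\tonde{x_0, g\cdot x_0} + 1$, subdivide it into $k \leq d\tonde{x_0, g \cdot x_0} + 2$ consecutive subpaths each of length $\leq 1$ with endpoints $x_0 = p_0, \dots, p_k = g\cdot x_0$, pick $h_j \in G$ with $d\tonde{p_j, h_j \cdot x_0} \leq R$ and $h_0 = e$, $h_k = g$, and observe that $h_{j-1}^{-1}h_j \in S$ because $d\tonde{x_0, h_{j-1}^{-1}h_j \cdot x_0} = d\tonde{h_{j-1}\cdot x_0, h_j \cdot x_0} \leq R + 1 + R$. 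Then $g = \tonde{h_0^{-1}h_1}\cdots\tonde{h_{k-1}^{-1}h_k}$, so $G = \langle S \rangle$ --- in particular $\pi_1\tonde M$ is finitely generated --- and $\va{g}_S \leq d\tonde{x_0, g\cdot x_0} + 2$.

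\emph{Conclusion and main difficulty.} Using $d\tonde{g\cdot x_0, h \cdot x_0} = d\tonde{x_0, g^{-1}h \cdot x_0}$ and $d_S\tonde{g,h} = \va{g^{-1}h}_S$, the two displayed inequalities say exactly that $g \mapsto g \cdot x_0$ is a $\tonde{2R+1,2}$-quasi-isometric embedding of $\tonde{G,d_S}$ into $\tonde{\wtde M, d}$; its image is $R$-dense by the choice of $R$, so by the quasi-surjectivity criterion it is a quasi-isometry, and since all word metrics on $G$ are quasi-isometric to one another (as noted above) this holds for every word metric. The step I expect to require the most care, precisely because $M$ is allowed to have boundary, is verifying that $\tonde{\wtde M, d}$ is a proper geodesic space --- i.e. that the lifted path metric is genuinely complete and that small metric balls are evenly covered --- so that Hopf--Rinow and proper discontinuity both apply; the rest is the routine orbit-comparison argument above, and in fact the subdivision step uses only the length-space property (paths almost realising distances), so actual geodesics can be avoided.
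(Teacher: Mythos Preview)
The paper does not prove this statement: the Milnor--\v{S}varc Lemma is quoted as a well-known result (introduced with ``we will not state in full generality'') and no proof is supplied. Your argument is the standard orbit-comparison proof and is correct; there is nothing to compare against in the paper itself.
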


This means that up to quasi-isometry, we can consider universal covers of compact manifolds as equivalent to their fundamental groups. This in particular is true for locally symmetric pieces as defined above, and for cusp-decomposable manifolds too.

\subsection{Ultrafilters and asymptotic cones}

We now provide some tools from mathematical logic, which will be used later to define asymptotic cones.

\begin{defin}
A family $\mathcal{F}$ of subsets of an index set $I$ is called a \emph{filter} if
\begin{itemize}
\item The set $I$ belongs to $\mathcal{F}$ and the empty set does not;
\item $\mathcal{F}$ is closed under supersets and finite intersections.
\end{itemize}

A filter $\UF$ is called an \emph{ultrafilter} if for every pair $A$, $B$ of subsets of $I$ such that $A \cup B$ belongs to $\UF$, at least one of them belongs to $\UF$ too.

An ultrafilter is said to be \emph{non-principal} if it contains all the subsets of $I$ with finite complement. 
\end{defin}

\begin{defin}
Let $\UF$ be an ultrafilter on a set $I$, let $A$ be a set and $\tonde{a_i}_{i \in I}$ a sequence of elements of $A$. We say that a property of elements of $A$ holds $\UF$-\emph{almost everywhere} on $\tonde{a_i}$, shortly $\UF$-a.e., if the subset of indices $j$ such that the property is true for $a_j$ is in $\UF$.
\end{defin}

We give the notion of limit with respect to an ultrafilter.

\begin{defin}
Let $\UF$ be an ultrafilter on $\N$. We say that a sequence $\tonde{a_i}_{i \in \N}$ of real numbers is \emph{$\UF$-a.e. bounded} if there exists a non-negative real number $B$ such that $\va{a_i} \leq B$ $\UF$-a.e. We say that the \emph{$\UF$-limit} of such a sequence is $L$, shortly $\UF-\lim a_i = L$, if for any positive $\eps$ the inequality $\va{a_i - L}<\eps$ holds $\UF$-a.e.
\end{defin}

The following lemma shows that $\UF$-limits behave better than classical limits (see \cite[Lemma 7.23]{DKlecGGT}).

\begin{lem}
Every $\UF$-bounded sequence of real numbers has a unique $\UF$-limit. If $\UF$ is non-principal and the sequence converges in the classical sense, the $\UF$-limit is equal to the classical limit. $\UF$-limits behave well with respect to algebraic operations, composition with continuous functions, etc., as classical limits do.
\end{lem}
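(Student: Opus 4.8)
The plan is to prove the four assertions in turn; only the existence of a $\UF$-limit carries genuine content, the rest being routine transcriptions of the classical arguments with ``$\UF$-a.e.'' in place of ``for all sufficiently large $i$''. For existence, let $\tonde{a_i}$ be $\UF$-bounded, say $\va{a_i} \leq B$ $\UF$-a.e., and I would produce the limit directly from the ultrafilter by setting $L := \inf\graffe{t \in \R : \graffe{i : a_i \leq t} \in \UF}$. First I would check that the threshold set is a nonempty subset of $\R$ bounded below, so that $L$ makes sense: $B$ belongs to it because $\graffe{i : a_i \leq B}$ is a superset of $\graffe{i : \va{a_i} \leq B} \in \UF$ and $\UF$ is closed under supersets, while no $t < -B$ belongs to it because then $\graffe{i : a_i \leq t}$ is disjoint from $\graffe{i : \va{a_i} \leq B} \in \UF$ and an ultrafilter contains no two disjoint sets; hence $L \in [-B,B]$. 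To verify $\UF-\lim a_i = L$, I would fix $\eps > 0$: by definition of the infimum some $t < L + \eps$ lies in the threshold set, so $\graffe{i : a_i < L + \eps}$, being a superset of $\graffe{i : a_i \leq t} \in \UF$, lies in $\UF$; on the other hand $L - \eps$ does not lie in the threshold set, so $\graffe{i : a_i \leq L - \eps} \notin \UF$, and since this set together with its complement $\graffe{i : a_i > L - \eps}$ has union the whole index set, which is in $\UF$, the defining property of an ultrafilter forces $\graffe{i : a_i > L - \eps} \in \UF$; intersecting the two members of $\UF$ just obtained---again a member of $\UF$---yields $\va{a_i - L} < \eps$ $\UF$-a.e.

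For uniqueness I would suppose $L_1 \neq L_2$ were both $\UF$-limits and take $\eps = \va{L_1 - L_2}/3$: the sets $\graffe{i : \va{a_i - L_1} < \eps}$ and $\graffe{i : \va{a_i - L_2} < \eps}$ both lie in $\UF$, hence so does their intersection, which is in particular nonempty since $\UF$ does not contain the empty set; any index $j$ in it satisfies $\va{L_1 - L_2} \leq \va{a_j - L_1} + \va{a_j - L_2} < \tfrac{2}{3}\va{L_1 - L_2}$, a contradiction. For the comparison with the classical limit, I would first note that a classically convergent sequence is bounded, hence $\UF$-bounded, so its $\UF$-limit exists by the above; and if $\UF$ is non-principal and $a_i \to L$ classically, then for each $\eps > 0$ the set $\graffe{i : \va{a_i - L} < \eps}$ contains a cofinite subset of the index set and therefore lies in $\UF$, whence $\UF-\lim a_i = L$.

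Finally, for stability under algebraic operations and continuous maps I would copy the classical $\eps$--$\delta$ proofs word for word, using only that a finite intersection of members of $\UF$ is again a member of $\UF$: for instance, if $F$ is continuous at $\tonde{L_1, \dots, L_k}$ and $\UF-\lim a_i^{(j)} = L_j$ for $j = 1, \dots, k$, then given $\eps > 0$ I would pick $\delta > 0$ from continuity, observe that the set of indices $i$ for which each $a_i^{(j)}$ lies within $\delta/k$ of $L_j$ is a finite intersection of members of $\UF$ and hence in $\UF$, and note that on this set $F\tonde{a_i^{(1)}, \dots, a_i^{(k)}}$ lies within $\eps$ of $F\tonde{L_1, \dots, L_k}$; sums, products, and quotients with non-vanishing denominator-limit are special cases. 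The one place requiring care---the mild ``main obstacle''---is the existence statement, where one must extract a single real number from the combinatorics of $\UF$; defining $L$ as an infimum of thresholds and invoking the ultrafilter dichotomy ``$A \in \UF$ or its complement is'' is exactly what does this, and is morally a repackaging of the completeness of $\R$.
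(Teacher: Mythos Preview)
Your proof is correct. The paper does not actually prove this lemma; it merely cites \cite[Lemma 7.23]{DKlecGGT} and states the result, so there is no ``paper's own proof'' to compare against. Your argument---defining $L$ as the infimum of the threshold set $\graffe{t : \graffe{i : a_i \leq t} \in \UF}$ and using the ultrafilter dichotomy to verify the $\UF$-limit condition---is one of the standard routes and is carried out cleanly; the remaining parts (uniqueness, agreement with classical limits, stability under continuous operations) are indeed routine as you say.
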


We can now define the asymptotic cone of a metric space.

\begin{defin}
Let $E$ be a metric space. If $\lambda$ is a real number, we denote by $\lambda E$ the metric space with the same support as $E$ and with all distances multiplied by $\lambda$. Let furthermore $\UF$ be a non-principal ultrafilter on $\N$, let $\tonde{\lambda_i}_{i \in \N}$ be a sequence of positive real numbers which is not $\UF$-a.e. bounded and $\tonde{x_i}_{i \in \N}$ be a sequence of points in $E$. The asymptotic cone $\CG \tonde{E, \tonde{x_i}, \tonde{\lambda_i}, \UF}$, or simply $\CG E$ if the construction data are clear or inessential, is the quotient of the set of sequences $\tonde{y_i}$ of points in $E$ such that $\lambda_i^{-1} d \tonde{x_i,y_i}$ is $\UF$-a.e. bounded by the relation that identifies two sequences $\tonde{y_i}$ and $\tonde{z_i}$ if $\UF-\lim \lambda_i^{-1} d \tonde{y_i,z_i} = 0$. The set $\CG E$ is endowed with a metric space structure, the distance function being given by $d \tonde{\quadre{y_i},\quadre{z_i}} = \UF-\lim \lambda_i^{-1} d \tonde{\tonde{y_i},\tonde{z_i}}$.
\end{defin}

In order to study the structure of a metric space via asymptotic cones it might be useful to see how the decomposition of the space in certain subsets induces a decomposition of its asymptotic cones in related subsets. To this aim we introduce the following

\begin{defin} \label{asFamily}
Let $E$ be a metric space. Consider a non-principal ultrafilter $\UF$ on $\N$, a sequence $\tonde{x_i}$ of points in $E$ and a sequence $\tonde{\lambda_i}$ of positive real numbers which is not $\UF$-a.e. bounded. Use this data to construct an asymptotic cone $\CG E$ of $E$.

Let $\mathcal{A}$ be a family of subsets of $E$, and let $\tonde{A_i}_{i \in \N}$ be a sequence of sets in $\mathcal{A}$ such that $\lambda_i^{-1} \tonde{d \tonde{x_i,A_i}}$ is $\UF$-a.e bounded. Consider the set of all classes of sequences $\quadre{y_i}$ in $\CG E$ such that $y_i$ belongs to $A_i$. They form a subset of $\CG E$. Consider the family of all subsets of $\CG E$ obtained this way by varying the sequence $\tonde{A_i}$. We call it the \emph{asymptotic family} of $\mathcal{A}$ and denote it by $\CG \mathcal{A}$.
\end{defin}

Note that if every point of $E$ is at a bounded distance from the union of $\mathcal{A}$ then $\CG E$ is the union of the sets in $\CG \mathcal{A}$.

Asymptotic cones turn useful in the study of quasi-isometries thanks to the following result (see \cite[Lemma 7.69]{DKlecGGT}).

\begin{lem} \label{QItoBL}
Let $E_1$ and $E_2$ be metric spaces. Fix a non-principal ultrafilter $\UF$ on $\N$, a sequence of positive real numbers $\tonde{\lambda_i}$ which is not $\UF$-a.e. bounded and a sequence $\tonde{x_i}$ of points in $E_1$. Suppose that for every $i \in \N$ there is a $\tonde{K,c}$-quasi-isometric embedding $f_i \colon E_1 \to E_2$. Then the function
\[ \CG f \colon \CG \tonde{E_1, \tonde{x_i}, \tonde{\lambda_i}, \UF} \to \CG \tonde{E_2, \tonde{f \tonde{x_i}}, \tonde{\lambda_i}, \UF} \]
that sends the class of a sequence $\tonde{y_i}$ to the class of the sequence $\tonde{f_i \tonde{y_i}}$ is well defined and is a $K$-bi-Lipschitz embedding. %If every $f_i$'s is a quasi-isometry and there exists $D \geq 0$ such that the distances between points in $E_2$ and $f_i \tonde{E_1}$ are bounded by $D$ for every $i$, then $\CG f$ is a $K$-bi-Lipschitz homeomorphism.
\end{lem}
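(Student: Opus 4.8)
The plan is to check three things in turn: that $\CG f$ sends admissible sequences to admissible sequences, that it passes to the quotient, and that it satisfies the two bi-Lipschitz inequalities. Each of these is a direct manipulation of $\UF$-limits based on the two defining inequalities of a $\tonde{K,c}$-quasi-isometric embedding, together with one preliminary observation: since $\tonde{\lambda_i}$ is not $\UF$-a.e. bounded, for every bound $B$ the set $\graffe{i : \lambda_i > B}$ lies in $\UF$ (the ultrafilter property promotes ``not bounded $\UF$-a.e.'' to ``$\UF$-a.e. bigger than any bound''); consequently $\lambda_i^{-1}$ is $\UF$-a.e. bounded and $\UF-\lim \lambda_i^{-1} c = 0$ for any constant $c$. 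This is what will make the additive constants disappear in the limit.

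First I would verify that $\CG f$ is defined on admissible sequences. If $\tonde{y_i}$ is a sequence in $E_1$ with $\lambda_i^{-1} d\tonde{x_i,y_i}$ being $\UF$-a.e. bounded, the upper inequality $d\tonde{f_i\tonde{x_i}, f_i\tonde{y_i}} \leq K d\tonde{x_i,y_i} + c$ gives $\lambda_i^{-1} d\tonde{f_i\tonde{x_i}, f_i\tonde{y_i}} \leq K \lambda_i^{-1} d\tonde{x_i,y_i} + \lambda_i^{-1} c$, whose right-hand side is $\UF$-a.e. bounded; hence $\tonde{f_i\tonde{y_i}}$ represents a point of $\CG\tonde{E_2, \tonde{f_i\tonde{x_i}}, \tonde{\lambda_i}, \UF}$. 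Next I would check that the map descends to the quotient: if $\UF-\lim \lambda_i^{-1} d\tonde{y_i,z_i} = 0$, the same inequality applied to the pair $y_i, z_i$ gives $\lambda_i^{-1} d\tonde{f_i\tonde{y_i}, f_i\tonde{z_i}} \leq K \lambda_i^{-1} d\tonde{y_i,z_i} + \lambda_i^{-1} c$, and taking $\UF$-limits the right-hand side tends to $0$; so equivalent sequences go to equivalent sequences and $\CG f$ is a well-defined map of cones.

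For the metric estimate, given classes $\quadre{y_i}, \quadre{z_i}$ in $\CG E_1$ the distance of their images is by definition $\UF-\lim \lambda_i^{-1} d\tonde{f_i\tonde{y_i}, f_i\tonde{z_i}}$, which is a legitimate $\UF$-limit by the bound just obtained. Applying the upper quasi-isometry inequality and passing to the $\UF$-limit yields the bound $\leq K\, \UF-\lim \lambda_i^{-1} d\tonde{y_i,z_i} = K\, d\tonde{\quadre{y_i},\quadre{z_i}}$; applying the lower inequality $d\tonde{f_i\tonde{y_i}, f_i\tonde{z_i}} \geq K^{-1} d\tonde{y_i,z_i} - c$ and passing to the $\UF$-limit yields $\geq K^{-1} d\tonde{\quadre{y_i},\quadre{z_i}}$. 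Hence $\CG f$ is a $K$-bi-Lipschitz embedding. I do not expect a genuine obstacle here; the only care needed is the bookkeeping of $\UF$-limits — using that they respect the order relation and commute with sums and scalar multiples as recalled above — and the observation on $\lambda_i^{-1}c$, so the ``hard part'' is really just not dropping a constant or a hypothesis along the way.
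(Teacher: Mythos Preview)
Your argument is correct and is exactly the standard verification; the paper does not give its own proof of this lemma but merely cites it from \cite[Lemma 7.69]{DKlecGGT}, so there is nothing to compare against.
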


\subsection{Geodesic metric spaces}

Recall that a geodesic is an isometric embedding of an interval of the real line into a metric space.

\begin{defin}
A metric space is called \emph{geodesic} if for any pair $p$ and $q$ of its points there is a geodesic $\gamma$ that joins $p$ and $q$.
\end{defin}

By the Hopf-Rinow Theorem, all complete Riemannian manifolds are geodesic. We will often confuse for brevity a geodesic with its image.

The property of being geodesic passes to asymptotic cones (see \cite[Proposition 7.54]{DKlecGGT}).

\begin{lem}
Any asymptotic cone of a cusp-decomposable manifold is geodesic.
\end{lem}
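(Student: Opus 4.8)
The plan is to deduce the statement from the two facts already recorded above: that complete Riemannian manifolds are geodesic (Hopf--Rinow), and the cited fact that being a geodesic metric space is inherited by asymptotic cones. First, though, I would fix a convention: since a cusp-decomposable manifold $M$ is compact (it is glued out of finitely many compact pieces), an asymptotic cone of $M$ itself is a single point, so the meaningful object is an asymptotic cone of its universal cover $\wtde{M}$ equipped with the lifted Riemannian metric and the associated distance $d$ --- which is also, by the Milnor--\v{S}varc Lemma, quasi-isometric to $\pi_1(M)$. So the claim to be proved is that every $\CG\tonde{\wtde{M}, \tonde{x_i}, \tonde{\lambda_i}, \UF}$ is geodesic.

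Next I would observe that $\tonde{\wtde{M}, d}$ is itself a geodesic metric space. Indeed $M$ is compact, hence complete as a Riemannian manifold; the pullback to a covering space of a complete Riemannian metric is again complete, so $\wtde{M}$ is a complete Riemannian manifold, and the Hopf--Rinow Theorem quoted above applies to give that it is geodesic.

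It then suffices to invoke \cite[Proposition 7.54]{DKlecGGT}: the asymptotic cone of a geodesic metric space, taken with respect to any basepoint sequence, any sequence of scaling factors which is not $\UF$-a.e. bounded, and any non-principal ultrafilter, is again geodesic. Unwinding that statement, the content is the following explicit construction, which I would sketch if a self-contained argument is preferred. Given $\quadre{y_i}$ and $\quadre{z_i}$ in $\CG \wtde{M}$, choose for each $i$ a geodesic $\gamma_i \colon \quadre{0,1} \to \wtde{M}$ from $y_i$ to $z_i$ parametrized at constant speed $d\tonde{y_i, z_i}$ (possible since $\wtde{M}$ is geodesic), and set $\gamma(t) = \quadre{\gamma_i(t)}$. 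One checks that each sequence $\tonde{\gamma_i(t)}$ stays at $\UF$-a.e. bounded $\lambda_i^{-1}$-scaled distance from $x_i$, since $d\tonde{x_i, \gamma_i(t)} \leq d\tonde{x_i,y_i} + d\tonde{y_i,z_i} \leq 2 d\tonde{x_i,y_i} + d\tonde{x_i,z_i}$, so that $\gamma(t)$ is a well-defined point of $\CG \wtde{M}$; and that $d\tonde{\gamma(s),\gamma(t)} = \UF-\lim \lambda_i^{-1}\va{s-t}\, d\tonde{y_i,z_i} = \va{s-t}\, d\tonde{\quadre{y_i},\quadre{z_i}}$, so that $\gamma$ is, after rescaling the parameter interval to the length $d\tonde{\quadre{y_i},\quadre{z_i}}$, a geodesic in $\CG \wtde{M}$ joining $\quadre{y_i}$ and $\quadre{z_i}$.

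There is essentially no obstacle here: the lemma is a direct consequence of results already available. The only points needing a word of care are the harmless convention about what ``asymptotic cone of the manifold'' means and the remark that the lifted metric on $\wtde{M}$ is complete so that Hopf--Rinow applies; and, if one carries out the construction above rather than merely citing it, the single thing to verify is that the ultralimit curve stays inside the cone and is traversed at constant speed.
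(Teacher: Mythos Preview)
Your proof is correct and follows exactly the paper's own route: the lemma in the paper is stated without proof, merely as an instance of \cite[Proposition 7.54]{DKlecGGT} together with the Hopf--Rinow remark that complete Riemannian manifolds are geodesic. Your additional clarification that one should read ``asymptotic cone of $\wtde{M}$'' rather than of the compact $M$ is a sensible gloss, and the explicit ultralimit-of-geodesics construction you sketch is precisely the content of the cited proposition.
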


Geodesics allow to define convexity.

\begin{defin}
A subset of a geodesic metric space is said to be \emph{convex} if any geodesic between any two of its points is contained in the subset.
\end{defin}

We now introduce the class of tree-graded geodesic spaces, that often arises when studying asymptotic cones. We first define geodesic triangles.

\begin{defin}
A \emph{geodesic triangle} of vertices $p$, $q$, $r$ in a geodesic metric space is the union of a geodesic between $p$ and $q$, a geodesic between $q$ and $r$ and a geodesic between $r$ and $p$. These three geodesics are called \emph{sides} of the triangle.

A geodesic triangle is said to be \emph{simple} if two distinct sides intersect only at endpoints.
\end{defin}

We can now recall the following definition from \cite[Definition 1.10]{DrutuSapir}.

\begin{defin} \label{topTreeGraded}
A geodesic metric space is \emph{tree-graded} with respect to a family of closed convex subspaces if:
\begin{enumerate}[(1)]
\item The space is the union of the subspaces of the family;
\item The intersection of two subspaces in the family contains at most one point;
\item Every simple geodesic triangle is contained in one subspace of the family.
\end{enumerate}
\end{defin}

We will often use the following lemma (see \cite[Lemma 2.13]{DrutuSapir}).

\begin{lem}
Let $E$ be a geodesic metric space tree-graded with respect to a family $\mathcal{A}$ of subspaces. Suppose that subspaces in $\mathcal{A}$ do not have global cut-points. Then the image of any topological embedding of a path-connected space without cut-points is contained in one subspace of the family.
\end{lem}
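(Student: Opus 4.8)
Proof proposal. The plan is to assume $\phi(Y)$ is not contained in any single subspace of the family $\mathcal A$ and to derive a contradiction by producing a point of $E$ whose removal separates two well‑chosen points $p,q\in\phi(Y)$, hence either disconnects $\phi(Y)$ or is a cut‑point of $\phi(Y)$. Write the embedding as $\phi\colon Y\to E$. Since $\phi$ is a topological embedding, $\phi(Y)$ with the induced topology is homeomorphic to $Y$, so $\phi(Y)$ is connected and has no global cut‑point; thus it suffices to show $\phi(Y)$ lies in one piece, and we may assume $\phi(Y)$ has at least two points (a single point lies in some piece by part~(1) of Definition~\ref{topTreeGraded}). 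Besides parts (1)--(3) of that definition and the fact that $E$ is geodesic, I will use two standard facts about tree‑graded spaces, both from \cite{DrutuSapir}: \textbf{(A)} any geodesic between two points of a piece $A$ is contained in $A$ (this uses (2) and (3)); and \textbf{(B)} if $p\ne q$ lie in no common piece, then every geodesic from $p$ to $q$ contains a point $r$, distinct from $p$ and $q$, which is a global cut‑point of $E$ separating $p$ from $q$, i.e. $E\setminus\graffe r=U\sqcup V$ with $U,V$ open, $p\in U$, $q\in V$.

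Assuming $\phi(Y)$ lies in no piece, the first step is to find $p\ne q$ in $\phi(Y)$ lying in no common piece. I argue by contraposition: suppose every two points of $\phi(Y)$ lie in a common piece; I claim $\phi(Y)$ then lies in one piece. Fix $o\ne x$ in $\phi(Y)$ contained in a common piece $A$, and suppose some $y\in\phi(Y)$ is not in $A$. Choose pieces $B\ni o,y$ and $C\ni x,y$; since $y\notin A$ we have $B\ne A$ and $C\ne A$, so by part~(2) $A\cap B=\graffe o$ and $A\cap C=\graffe x$; also $o\notin C$, for $o\in C$ would give $o\in A\cap C=\graffe x$, i.e. $o=x$; hence $B\ne C$ and $B\cap C=\graffe y$. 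The points $o,x,y$ are pairwise distinct, and by (A) the geodesic triangle on them has sides $[o,x]\subseteq A$, $[o,y]\subseteq B$, $[x,y]\subseteq C$; any two distinct sides meet only in a common vertex, since their intersection lies in the intersection of two distinct pieces. So the triangle is simple, hence by part~(3) it is contained in a single piece, which must equal $A$ (it contains $o\ne x$, and so does $A$); but then $y\in A$, a contradiction. Therefore $\phi(Y)\subseteq A$.

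Now let $p\ne q$ in $\phi(Y)$ lie in no common piece, take a geodesic $[p,q]$ in $E$ — not contained in any piece, by (A) — and let $r$ be as in (B), so $E\setminus\graffe r=U\sqcup V$ with $U,V$ open, $p\in U$, $q\in V$, and $r\notin\graffe{p,q}$. If $r\notin\phi(Y)$, then $\phi(Y)=(\phi(Y)\cap U)\sqcup(\phi(Y)\cap V)$ is a partition of $\phi(Y)$ into two relatively open sets, both nonempty (they contain $p$ and $q$), contradicting connectedness of $\phi(Y)\cong Y$. If $r\in\phi(Y)$, then $\phi(Y)\setminus\graffe r=(\phi(Y)\cap U)\sqcup(\phi(Y)\cap V)$ is such a partition of $\phi(Y)\setminus\graffe r$, so $r$ is a global cut‑point of $\phi(Y)$, and hence $Y$ has a global cut‑point — again a contradiction. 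Thus $\phi(Y)$ is contained in a single subspace of $\mathcal A$.

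The routine part is the point‑set bookkeeping in the last two steps; the substantive input is (A) and, above all, (B), whose proof in \cite{DrutuSapir} rests on the fine structure of geodesics and cut‑points in tree‑graded spaces, and this is where I expect the real difficulty to lie — in particular in the $\R$‑tree‑like situation where no piece meets $[p,q]$ in more than one point, so that the separating point $r$ has to be located "inside a geodesic" rather than produced as a projection onto a piece. I also note that the hypothesis that the subspaces in $\mathcal A$ have no global cut‑point does not seem to be needed for the argument above; its role is to make the conclusion sharp — a piece possessing a global cut‑point could not itself be the image of such a $Y$ — and it is what is required for the usual companion statements on uniqueness of the piece and canonicity of the family $\mathcal A$.
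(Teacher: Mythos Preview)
The paper does not prove this lemma at all; it merely quotes it as \cite[Lemma 2.13]{DrutuSapir}. Your proposal therefore goes well beyond what the paper does, and it is correct. The first step --- showing that if any two points of $\phi(Y)$ share a piece then all of $\phi(Y)$ lies in a single piece --- is a neat application of axioms (2) and (3) via a simple triangle with sides forced into three distinct pieces; the second step is the standard cut-point dichotomy. As you yourself flag, the real content is fact~(B), which is indeed available in Dru\c{t}u--Sapir's analysis of projections onto pieces and the transversal-tree structure of a tree-graded space (their Lemmas 2.5--2.7 and Corollary 2.10 suffice to produce the separating point). Your closing remark is also accurate: the hypothesis that pieces have no global cut-points is not used in your argument, and in Dru\c{t}u--Sapir it serves to make the tree-graded decomposition canonical (pieces are then exactly the maximal subsets without cut-points) rather than to make this particular lemma hold.
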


% \begin{proof}
% Geodesics are isometric embeddings of intervals of the real line in a metric space. Lemma \ref{QItoBL} allows us to construct a geodesic in the asymptotic cone of a metric space $E$ from sequences of geodesics in $E$.
% \end{proof}

We now describe an important class of geodesic metric spaces (see \cite[Definition 3.19]{DrutuSapir}.

\begin{defin}
A geodesic metric space $E$ is asymptotically tree-graded with respect to a family $\mathcal{A}$ of subsets if any its asymptotic cone $\CG E$ is tree-graded with respect to the asymptotic family $\CG \mathcal{A}$ defined in \ref{asFamily}.
% \begin{itemize}
% \item Any two distinct subsets in $\mathcal{A}$ have infinite Hausdorff distance;
% \item Any asymptotic cone $\CG E$ of $E$ is tree graded with respect to the asymptotic family $\CG \mathcal{A}$ defined in \ref{asFamily}.
% \end{itemize}
\end{defin}

When applied to groups, the previous definition becomes one of the many equivalent definitions of a relatively hyperbolic group \cite[Theorem 8.5]{DrutuSapir}.

\begin{defin}
A finitely generated group $G$ is hyperbolic relatively to a family of finitely generated subgroups $\graffe{H_1,\dots,H_m}$ if the Cayley graph of $G$ is asymptotically tree-graded with respect to the family of left cosets of the $H_i$'s.
\end{defin}

\subsection{Locally symmetric spaces}

Let $X$ be a connected, complete, non-compact and finite volume, locally symmetric, negatively curved $n$-manifold, with $n$ greater than or equal to $3$. Its universal cover $\wtde{X}$ is isometric to one of the following symmetric spaces (see \cite[\S 2.3]{SchwRank1}):

\begin{itemize}
 \item The real hyperbolic space $\R \hyp^n$;
 \item The complex hyperbolic space $\mathbb C \hyp^{\frac{n}{2}}$; this case can occur only if $n$ is even;
 \item The quaternionic hyperbolic space $\mathbb H \hyp^{\frac{n}{4}}$; this case can occur only if $n$ is a multiple of $4$;
 \item The Cayley or octonionic hyperbolic plane $\mathbb O \hyp^2$; this case can occur only if $n$ is equal to $16$.
\end{itemize}

We will concisely refer to the geometry of the universal cover by saying that $X$ is $n$-dimensonal $\F$-hyperbolic, with $\F$ equal to, respectively, $\R$, $\mathbb{C}$, $\mathbb{H}$ or $\mathbb{O}$.

We recall now some definitions in the non-positively curved setting. See e.g. \cite{BGS}, \cite{BrHa} for more details.

\begin{defin}
Let $Y$ be a simply connected complete Riemannian $m$-manifold of non-positive curvature. The \emph{boundary at infinity} of $Y$, denoted by $\partial Y$, is the quotient of the set of geodesic rays $\gamma \colon \sx[ 0, +\infty \dx) \to Y$ by the relation that identifies two of them if their images lie at finite Hausdorff distance from each other. The points of $\partial Y$ are called \emph{points at infinity} of $Y$.

The union of $Y$ and $\partial Y$ is denoted by $\barr{Y}$ and has a topology which coincides with the usual one on $Y$ and makes $\barr{Y}$ homeomorphic to a closed $m$-dimensional ball.
\end{defin}

We will need the following result (see \cite[\S 3.2]{BGS}, \cite[\S 4.13]{BGS}).

\begin{lem}
For any pair of points $p$ in $Y$ and $\omega$ in $\partial Y$ there is a unique geodesic ray $\gamma \colon \sx[ 0, +\infty \dx) \to Y$ with $\gamma \tonde{0} = p$ which is in the class of $\omega$.

If furthermore $Y$ has sectional curvatures bounded above by some negative real number, then for any pair of points $\omega_1$ and $\omega_2$ in $\partial Y$ there is a unique, up to reparametrization via translations, geodesic line $\gamma \colon \R \to Y$ such that the ray defined on the positive half-line of $\R$ is in the class of $\omega_1$ and the ray defined on the negative half-line is in the class of $\omega_2$.
\end{lem}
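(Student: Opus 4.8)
The plan is to derive everything from two standard features of a Hadamard manifold $Y$ --- a simply connected complete manifold of non-positive curvature: convexity of the distance function, so that for any two unit-speed geodesics $\gamma,\gamma'$ the map $s\mapsto d(\gamma(s),\gamma'(s))$ is convex, and, once the sectional curvature is bounded above by $-a^2<0$, the visibility property. First I would handle the ray. Fix $p\in Y$ and a representative ray $\rho\colon[0,\infty)\to Y$ of $\omega$, put $D=d(p,\rho(0))$, and let $\gamma_n$ be the unit-speed geodesic from $p$ to $\rho(n)$. The initial vectors $\gamma_n'(0)$ lie in the compact unit sphere of $T_pY$, so along a subsequence they converge to some $v$; set $\gamma(t)=\exp_p(tv)$, a geodesic ray since $Y$ is complete, and note $\gamma_n(t)\to\gamma(t)$ for each fixed $t$. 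The convex function $s\mapsto d(\gamma_n(s),\rho(s))$ equals $D$ at $s=0$ and is at most $|n-d(p,\rho(n))|\le D$ at $s=d(p,\rho(n))$, hence stays $\le D$; letting $n\to\infty$ gives $d(\gamma(s),\rho(s))\le D$ for all $s$, so $\gamma$ represents $\omega$. For uniqueness, if $\gamma_1,\gamma_2$ are rays from $p$ both representing $\omega$, then $h\colon s\mapsto d(\gamma_1(s),\gamma_2(s))$ is convex and bounded --- finite Hausdorff distance between the images, together with the common basepoint, controls the reparametrization --- with $h(0)=0$; a bounded convex function on $[0,\infty)$ is non-increasing, so $h$ is the zero function and $\gamma_1=\gamma_2$.

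For the line, assume the curvature is bounded above by $-a^2<0$, fix a basepoint $o$, and let $\rho_1,\rho_2$ be the rays from $o$ representing $\omega_1,\omega_2$ given by the first part. Since $\omega_1\ne\omega_2$ their initial vectors differ, so $\alpha:=\angle(\rho_1'(0),\rho_2'(0))>0$. Let $\sigma_n$ be the unit-speed geodesic segment from $\rho_1(n)$ to $\rho_2(n)$. Because $\rho_1|_{[0,n]}$ and $\rho_2|_{[0,n]}$ are the geodesics from $o$ to the endpoints of $\sigma_n$, the angle $\sigma_n$ subtends at $o$ is exactly $\alpha$ for every $n$; the visibility property then forces $\sigma_n$ to meet a fixed ball $B(o,R)$ with $R=R(o,\alpha/2)$ (otherwise it would subtend angle $<\alpha/2$ at $o$). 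Choose $m_n\in\Imm \sigma_n$ with $d(o,m_n)<R$ and reparametrize $\sigma_n$ so that $\sigma_n(0)=m_n$; then $\sigma_n$ is defined on $[-s_n,t_n]$ with $s_n,t_n>n-R\to\infty$. The pairs $(m_n,\sigma_n'(0))$ lie in the unit tangent bundle over $\overline{B(o,R)}$, which is compact, so a subsequence converges to some $(m,v)$; set $\gamma(t)=\exp_m(tv)\colon\R\to Y$ and note $\sigma_n(t)\to\gamma(t)$ for each fixed $t$. A convexity estimate as before --- comparing $\sigma_n$ with $\rho_2$ on $[0,t_n]$ and with the reversal of $\rho_1$ on $[-s_n,0]$, the endpoint distances being $<R$ in each case --- shows that $\gamma|_{[0,\infty)}$ stays within distance $R$ of $\rho_2$ and $t\mapsto\gamma(-t)$ within $R$ of $\rho_1$, so $\gamma$ joins $\omega_1$ to $\omega_2$.

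For uniqueness of the line, suppose $\gamma_1,\gamma_2\colon\R\to Y$ both join $\omega_1$ to $\omega_2$. Their positive rays are asymptotic and their negative rays are asymptotic, so $\Imm \gamma_1$ and $\Imm \gamma_2$ lie at finite Hausdorff distance; by the Flat Strip Theorem this distance is the width of a Euclidean strip $\R\times[0,c]$ embedded isometrically in $Y$, and since $Y$ contains no flat strip of positive width the distance is $0$, i.e.\ $\Imm \gamma_1=\Imm \gamma_2$. Two parametrizations of a single geodesic line that assign the same ends to $\omega_1$ and $\omega_2$ differ by a translation of the parameter, which is the claimed uniqueness. The one genuinely substantial ingredient is the visibility property --- that from the curvature bound $\le-a^2$ one extracts, for each $o$ and $\epsilon>0$, a radius $R(o,\epsilon)$ such that every geodesic segment whose image misses $B(o,R(o,\epsilon))$ subtends angle less than $\epsilon$ at $o$. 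This is proved by Rauch--Toponogov comparison with the model space of constant curvature $-a^2$, where the estimate is computed directly, and it is exactly the step that uses $\omega_1\ne\omega_2$; it is classical and contained in the cited references \cite{BGS}, so the remainder of the argument is only a matter of combining compactness with convexity of the distance function.
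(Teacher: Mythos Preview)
The paper does not actually prove this lemma: it is stated as a standard fact with a reference to \cite[\S 3.2, \S 4.13]{BGS}, and no argument is given. Your proposal is a correct sketch of the classical proof that one finds in those references --- convexity of the distance function for the existence and uniqueness of rays, the visibility property (from a strict negative upper curvature bound) to trap the approximating segments near a fixed point and extract a line, and the Flat Strip Theorem for uniqueness of the line. So there is nothing to compare: you have supplied the argument the paper omits, and your approach is the standard one.

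One small remark: in the uniqueness-of-ray step you invoke boundedness of $h(s)=d(\gamma_1(s),\gamma_2(s))$ from the finite-Hausdorff-distance definition of $\partial Y$ used in the paper; it is worth making explicit (as you essentially do) that for unit-speed rays from a common basepoint, finite Hausdorff distance of the images forces $h$ itself to be bounded, since that is not literally the definition. Otherwise the proposal is fine.
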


\begin{defin}
Let $Y$ be a simply connected complete Riemannian manifold of non-positive curvature. Choose a geodesic ray $\gamma$ in $Y$. The \emph{Busemann function} relative to $\gamma$ is the real valued function $b \colon Y \to \R$ defined by $b \tonde{p} = \lim_{t \rightarrow +\infty} d\tonde{p,\gamma\tonde{t}}-t$.
\end{defin}

Busemann functions relative to equivalent rays differ by a constant. It is thus possible to talk about Busemann functions relative to a point in the boundary at infinity. When the sectional curvature is bounded above by a negative real number, the limit of a Busemann function relative to $\omega$ at a point at infinity in $\barr{Y}$ is $-\infty$ at $\omega$ and $+\infty$ at all the other points. Furthermore, Busemann functions are convex and their gradient has norm equal to $1$.

\begin{defin}
A \emph{horoball} is a sublevel set of a Busemann function. A horosphere is the boundary of a horoball.
\end{defin}

When the sectional curvature is bounded above by a negative real number, every horoball and every horosphere has a unique point at infinity in its closure in $\partial Y$, which is the point where the defining Busemann function tends to $-\infty$.

Now we return to our finite volume locally symmetric manifold $X$. If we required ``constant curvature'' instead of locally symmetric, we would have gotten a real hyperbolic manifold, so our treatment is a generalization of the purely hyperbolic case treated in \cite{FLSgraph}. As in real hyperbolic case, it is possible to perform the thin-thick decomposition on $X$, and this allows us to detect a finite number of cusps of the form $N \times \left[ 0, +\infty \right)$.

If we now remove from $X$ the internal part of all the cusps we get a compact manifold with boundary $P$, such that for any boundary component $N$ the induced map $\pi_1 \tonde{N} \to \pi_1 \tonde{P}$ is injective. We will call such a manifold a \emph{piece}, and its universal cover a \emph{neutered space}. The neutered space that covers $P$ will be denoted with $\wtde{P}$. It is obtained by removing from $\wtde{X}$ a collection of disjoint horoballs which is invariant under the action of $\pi_1 \tonde{P}$. With a slight abuse of notation, we will continue to say that $P$ is $n$-dimensional $\F$-hyperbolic when $X$ is.

Every boundary component $N$ is covered by a $\pi_1 \tonde{P}$-invariant collection of horospheres, and the metric tensor of $X$ induces on any such horosphere $\wtde{N}$ a Riemannian structure isometric to a nilpotent Lie group with a left invariant Riemannian metric. A boundary component $N$ is thus isometric to the quotient of a nilpotent Lie group by a discrete group of isometries generated by a finite number of Lie group automorphisms and left translations; such manifolds are called infranil.

The precise nilpotent group which covers $N$ depends on the geometry of $\wtde{X}$. For the real hyperbolic case, it is simply $\R^{n-1}$. If instead $\F$ is equal to $\mathbb C$, $\mathbb H$ or $\mathbb O$, let $k$ be the dimension of $\F$ as a real vector space, and let $\Imm \F$ be its imaginary part, which is a $\tonde{k-1}$-dimensional real vector space. Then the nilpotent Lie group that covers $N$ is isomorphic to the group $\G \tonde{\frac{n}{k}-1,\F}$, which is defined as follows (see \cite[\S 2.5]{SchwRank1}).
\begin{defin}
Let $m$ be a positive integer and $\F$ one of $\mathbb C$, $\mathbb{H}$ or $\mathbb{O}$. The simply connected nilpotent Lie group $\G \tonde{m,\F}$ is $\F^m \times \Imm \F$ with the following group operation:
\[ \tonde{z_1, \dots, z_m,s}*\tonde{w_1, \dots, w_m,t} = \]
\[ = \tonde{z_1 + w_1, \dots, z_m + w_m,s+t+\sum_{i=1}^m \Imm \barr{z_i}w_i}. \]
\end{defin}

It is easy to see that the inverse of $\tonde{z_1, \dots, z_m,s}$ is $\tonde{-z_1, \dots, -z_m,-s}$. The commutator subgroup and the center of $\G \tonde{m,\F}$ are both equal to $\Imm \F$, and thus the group is step $2$ nilpotent. For brevity, the Lie group which covers the boundary components of $P \subseteq X$ will be denoted by $\G \tonde{X}$.

Note that the group of isometries of $X$ acts freely and properly discontinuously on the covering symmetric space. We will need a more general definition in the case where the action is not necessarily free.

\begin{defin} \label{lsOrb}
Let $Y$ be a negatively curved symmetric space. Suppose that a group of isometries acts on $Y$ properly discontinuously, but not necessarily freely, and the quotient $X'$ by the action is not compact and has finite volume. Then $X'$ is a \emph{complete non-compact finite volume locally symmetric negatively curved orbifold}.
\end{defin}

All of the previous considerations apply to finite volume negatively curved locally symmetric orbifolds as well with slight modifications. In particular,

\begin{itemize}
\item It is possible to perform the thin-thick decomposition on $X'$ to get a finite number of cusps of the form $N' \times \left[ 0, +\infty \right)$;
\item By removing the internal part of all the cusps we get an orbifold with boundary $P'$. Every boundary component $N'$ is an orbifold, and the induced map $\pi_{1,orb} \tonde{N'} \to \pi_{1,orb} \tonde{P'}$ between \emph{orbifold} fundamental groups is injective;
\item The orbifold universal cover of an orbifold piece is still a neutered space, which is obtained by removing from $Y$ a collection of disjoint open horoballs which is invariant under the action of $\pi_{1,orb} \tonde{P'}$;
\item Boundary components $N'$ of $P'$ are covered by horospheres in $Y$, so their orbifold universal cover is the same as in the manifold case.
\end{itemize}

\subsection{Nilpotent groups}

Here we recall some facts on nilpotent groups. %We will focus mainly on the Lie and the finitely generated case, and on the relations between them.
When talking about a Lie group, we will endow it with a left-invariant Riemannian metric; we will also introduce another useful metric on nilpotent Lie groups.

\begin{defin}
%A closed manifold is said to be a \emph{nilmanifold} if it is the quotient of a simply connected nilpotent Lie group by an action via left multiplications of a discrete subgroup.
A closed manifold is said to be \emph{infranil} if it is the quotient of a simply connected nilpotent Lie group by any group of isometries acting freely and properly discontinuously.
\end{defin}

\begin{defin}
A group acting by isometries properly discontinuously and cocompactly, but not necessarily freely, on a simply connected nilpotent Lie group, is said to be \emph{almost-crystallographic}.
\end{defin}

A boundary component $N$ of a piece is an infranil manifold; its fundamental group $\pi_1 \tonde{N}$ is thus isomorphic to a discrete subgroup of the group of isometries of a suitable simply connected nilpotent Lie group $\G$. %The full group of isometries of $\G$ is given by $\G \rtimes \Aut_m \G$, where we denote by $\Aut_m \G$ those automorphisms of $\G$ which are isometries. The simplest properly discontinuous isometric actions are given by the left multiplications by the elements of a discrete subgroup.
In the more general case of orbifold pieces any boundary component is a quotient of a simply connected nilpotent Lie group $\G$ by the action of an almost-crystallographic group, which is thus the orbifold fundamental group of that boundary component. It follows by Milnor-\v{S}varc Lemma that this almost-crystallographic group is quasi-isometric to $\G$.

\begin{defin}
A map between simply connected nilpotent Lie groups is said to be \emph{affine} if it is the composition of a Lie groups homomorphism with a left multiplication by an element of the target group. A map between orbifolds, in particular manifolds, whose universal covers are simply connected nilpotent Lie groups is affine if any its lift to universal covers is affine.
\end{defin}

We will need the following result (see \cite[Theorem 3.7]{Dekimpe}).

\begin{lem} \label{isoFromAff}
Let $N_1$ and $N_2$ be quotients of simply connected nilpotent Lie groups $\G_1$ and $\G_2$ respectively by an isometric action of almost-crystallographic groups $\Gamma_1$ and $\Gamma_2$ respectively. Suppose that there is an isomorphism $\phi \colon \Gamma_1 \to \Gamma_2$. Then $\phi$ is induced by an affine homeomorphism between $N_1$ and $N_2$. More precisely, there is an affine isomorphism $\barr{\phi} \colon \G_1 \to \G_2$ such that for any $x$ in $\G_2$ and for any $g$ in $\Gamma_1$ the equality $\phi \tonde{g} \cdot x = \barr{\phi} \tonde{g \cdot {\barr{\phi}}^{-1} \tonde{x}}$ holds.
\end{lem}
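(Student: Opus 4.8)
The plan is to deduce the statement from Mal'cev rigidity of lattices in nilpotent Lie groups together with the Bieberbach-type structure theory of almost-crystallographic groups. Recall that the isometries of $\G_i$ occurring here are affine, so $\Gamma_i$ acts through the affine group $\mathrm{Aff}\tonde{\G_i}=\G_i\rtimes\Aut\tonde{\G_i}$, each element acting as an automorphism (its \emph{linear part}) followed by a left translation. Inside $\Gamma_i$ sits the \emph{translation subgroup} $\Lambda_i:=\Gamma_i\cap\G_i$, i.e.\ the kernel of the ``linear part'' homomorphism $\Gamma_i\to\Aut\tonde{\G_i}$; it is normal, and of finite index because the quotient embeds in $\Aut\tonde{\G_i}$ as the finite holonomy group, so $\Lambda_i$ is a cocompact lattice in $\G_i$ and hence a finitely generated torsion-free nilpotent group. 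The crucial structural input is that $\Lambda_i$ is the Fitting subgroup of $\Gamma_i$ (part of the structure theory of almost-crystallographic groups, see \cite{Dekimpe}), hence characteristic; therefore $\phi$ restricts to an isomorphism $\restr{\phi}{\Lambda_1}\colon\Lambda_1\to\Lambda_2$.

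By Mal'cev rigidity a cocompact lattice in a simply connected nilpotent Lie group is Zariski dense, and every isomorphism between two such lattices extends uniquely to an isomorphism of the ambient Lie groups. Applying this to $\restr{\phi}{\Lambda_1}$ yields a Lie group isomorphism $\psi\colon\G_1\to\G_2$ with $\restr{\psi}{\Lambda_1}=\restr{\phi}{\Lambda_1}$; since $\psi$ is a Lie isomorphism, conjugation by it carries left translation by $\lambda\in\Lambda_1$ to left translation by $\psi\tonde{\lambda}$, so $\psi$ already conjugates the $\Lambda_1$-action to the $\Lambda_2$-action. It remains to correct $\psi$ so that it conjugates the whole $\Gamma_1$-action.

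To that end, for $g\in\Gamma_1$ put $\theta\tonde{g}:=\psi^{-1}\circ\tonde{\phi\tonde{g}\cdot}\circ\psi\in\mathrm{Aff}\tonde{\G_1}$; by the previous paragraph and $\phi\tonde{\Lambda_1}=\Lambda_2$ this affine action of $\Gamma_1$ on $\G_1$ agrees with the given one on $\Lambda_1$. Using that $\Lambda_1$ is normal in $\Gamma_1$ and Zariski dense in $\G_1$, a short computation in $\G_1\rtimes\Aut\tonde{\G_1}$ shows that $\tonde{g\cdot}^{-1}\circ\theta\tonde{g}$ commutes with every left translation of $\G_1$, hence is a right translation $R_{c_g}$; moreover $c_g$ is trivial for $g\in\Lambda_1$ and $g\mapsto c_g$ descends to a cocycle of the finite holonomy group of $\Gamma_1$ with values in $\G_1$. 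One then shows this cocycle is a coboundary by induction on the nilpotency class of $\G_1$: along a central series each successive quotient is a real vector space acted on by the finite holonomy group, so the relevant (abelian) first cohomology vanishes by averaging, and a standard inductive argument forces the full non-abelian class to vanish as well. Writing $c_g=s^{-1}A_g^{-1}\tonde{s}$ for the resulting $s\in\G_1$, where $A_g$ is the linear part of $g\cdot$, one checks that $\barr{\phi}:=\psi\circ R_s$ — still an affine isomorphism $\G_1\to\G_2$ in the sense of the definition above — satisfies $\tonde{\phi\tonde{g}\cdot}\circ\barr{\phi}=\barr{\phi}\circ\tonde{g\cdot}$ for every $g\in\Gamma_1$; evaluating at $\barr{\phi}^{-1}\tonde{x}$ gives precisely $\phi\tonde{g}\cdot x=\barr{\phi}\tonde{g\cdot\barr{\phi}^{-1}\tonde{x}}$. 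Finally this identity shows $\barr{\phi}$ takes $\Gamma_1$-orbits to $\Gamma_2$-orbits, so it descends to a homeomorphism $N_1\to N_2$, which is affine by construction and induces $\phi$ on (orbifold) fundamental groups.

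The main obstacle is this last step, i.e.\ trivializing the holonomy cocycle $\tonde{c_g}$ by a translation: it is the almost-crystallographic analogue of Bieberbach's third theorem, and it is the point at which cocompactness (finiteness of the holonomy group) is genuinely used, beyond the Mal'cev rigidity of $\Lambda_1$. A secondary point needing care is the identification of $\Lambda_i$ with the Fitting subgroup, which is what makes $\phi$ respect the translation subgroups in the first place; without it one would only know $\phi\tonde{\Lambda_1}$ and $\Lambda_2$ are commensurable and would have to work to pass from a common finite-index sublattice back to $\Gamma_1$ and $\Gamma_2$.
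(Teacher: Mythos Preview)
The paper does not give its own proof of this lemma: it simply quotes it as \cite[Theorem~3.7]{Dekimpe}. Your sketch is essentially a compressed version of the argument behind that theorem, and the strategy is sound: identify the translation subgroups $\Lambda_i$ as the Fitting subgroups (hence characteristic), extend $\restr{\phi}{\Lambda_1}$ to a Lie isomorphism $\psi$ via Mal'cev rigidity, and then absorb the discrepancy between the two affine $\Gamma_1$-actions into a $1$-cocycle of the finite holonomy group with values in $\G_1$, which one trivializes by induction along the central series. This is precisely the almost-crystallographic analogue of Bieberbach's second theorem, and is the core of what Dekimpe proves.

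Two small points of care. First, in the non-abelian setting the explicit coboundary formula $c_g=s^{-1}A_g^{-1}\tonde{s}$ and the corresponding correction $\barr{\phi}=\psi\circ R_s$ need a bit more bookkeeping than in the Euclidean case: the cocycle identity you derive is a twisted product in $\G_1$, and the inductive splitting along the lower central series has to be set up so that the successive abelian cohomology vanishings actually assemble to kill the full non-abelian class; you correctly flag this as the main obstacle, but ``one checks'' is hiding real work here. Second, your claim that $\barr{\phi}=\psi\circ R_s$ is affine in the paper's sense (left translation composed with a Lie homomorphism) is true, but only after rewriting $\psi\tonde{ys}=\psi\tonde{s}\cdot\tonde{c_{\psi\tonde{s}^{-1}}\circ\psi}\tonde{y}$; it is worth saying this explicitly. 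With those caveats, your outline matches the standard proof the paper is invoking.
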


% \begin{defin}
% A discrete subgroup $\Gamma$ of a Lie group $G$ is a \emph{lattice} if the volume of $\Gamma \backslash G$ is finite. It is a \emph{uniform lattice} if $\Gamma \backslash G$ is compact.
% \end{defin}
% 
% If $G$ is nilpotent, all lattices are uniform, so from now on we will simply talk about lattices in nilpotent Lie groups. It follows immediately from Milnor-\v{S}varc Lemma that uniform lattices are finitely generated groups quasi-isometric to their ambient Lie groups.
% 
% The following applies in particular to cusp subgroups of a cusp-decomposable manifold.
% 
% \begin{lem}
% Let $\G$ be a nilpotent Lie group and $\Gamma$ a discrete cocompact group of its isometries. Then $\Gamma \cap \G$ has finite index in $\Gamma$, and is a lattice in $\G$.
% \end{lem}
% 
% \begin{proof}
% This result was proven by Auslander in \cite{AusNilp}, and is a generalization of Bieberbach's theorem for $\R^n$.
% \end{proof}

%We can then introduce
%the following
%\begin{defin}
%Let $G$ be a simply connected nilpotent Lie group and $\Gamma$ a discrete group of isometries of $G$ acting freely with compact quotient, as above. We call $\Gamma$ an \emph{almost Bieberbach} group.
%\end{defin}

The Carnot-Caratheodory metric is a way to put a distance on a manifold in the subriemannian setting. Here we recall the definition specifically for the Lie groups $\G$ used in this paper.

\begin{defin}
Let $e \in \G = \G \tonde{m,\F}$ be the identity element. Fix a positive definite scalar product $b \tonde{\cdot,\cdot}$ on $V=T_e \F^m \subseteq T_e \G$. Consider the distribution of subspaces on $\G$ obtained via left translations of $\tonde{V,b}$. With an abuse of notation we will still call $V$ the distribution and $b$ the scalar product on it. Then for any pair of points $g_1$ and $g_2$ of $\G$ there is a piecewise smooth curve joining them everywhere tangent to the distribution. The function
\[ d_{CC} \tonde{g_1, g_2} = \inf \graffe{\int \sqrt{b \tonde{\gamma' \tonde{t}, \gamma' \tonde{t}}} dt }, \]
where the infimum is taken over all piecewise smooth curves $\gamma$ with endpoints $g_1$ and $g_2$ and everywhere tangent to $V$, is a \emph{Carnot-Caratheodory} distance function on $\G$, invariant by the action of $\G$ itself via left translations.
\end{defin}

Introducing this new distance has, however, no effect from the topological and the quasi-isometry point of view \cite[\S 2.6]{SchwRank1}.

\begin{lem} \label{dCCsameTop}
The $d_{CC}$ distance induces the usual topology $\G$. The identity of a nilpotent Lie group as above is a quasi-isometry between its Riemannian and Carnot-Caratheodory distances.
\end{lem}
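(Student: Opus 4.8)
The plan is to deduce both assertions from a single quantitative input in sub-Riemannian geometry. Write $d$ for a left-invariant Riemannian distance on $\G=\G\tonde{m,\F}$ and $d_{CC}$ for the Carnot-Caratheodory distance; the statement is insensitive to the choice of left-invariant metric, since any two of them are comparable at $e$ and hence, by left-invariance, comparable everywhere, which changes $d$ only up to a global bi-Lipschitz factor. First, I would record that $d_{CC}$ is a finite-valued metric that induces the manifold topology. The distribution $V$ is the left translate of $V_e=T_e\F^m$; since the center and the commutator subgroup of $\G$ both equal $\Imm\F$, on the Lie algebra $\mathfrak g=V_e\oplus\Imm\F$ one has $[\mathfrak g,\mathfrak g]=[V_e,V_e]=\Imm\F$ (brackets with the central factor $\Imm\F$ vanish), so $V_e+[V_e,V_e]=\mathfrak g$ and the distribution is bracket-generating of step $2$. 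By the Chow--Rashevskii theorem any two points of $\G$ are joined by a piecewise-smooth horizontal curve, so $d_{CC}$ is finite; and a sub-Riemannian distance associated to a bracket-generating distribution induces the ambient topology, the substantive direction being the upper ball--box estimate. In particular $p\mapsto d_{CC}\tonde{e,p}$ is continuous on $\G$, which is the first assertion.

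Next I would establish the lower half of the quasi-isometry inequality, namely $d_{CC}\geq c\,d$ for some constant $c>0$. Comparing $b$ with the Riemannian metric restricted to $V_e$ --- two inner products on the finite-dimensional space $V_e$ --- and propagating by left-invariance of $b$, of $V$ and of the metric, one obtains a constant $c>0$ with $\int\sqrt{b\tonde{\gamma',\gamma'}}\,dt\geq c\,\ell\tonde{\gamma}$ for every horizontal curve $\gamma$, while $\ell\tonde{\gamma}\geq d$ of its endpoints. Taking the infimum over horizontal curves yields $d_{CC}\tonde{g_1,g_2}\geq c\,d\tonde{g_1,g_2}$ for all $g_1,g_2\in\G$.

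The main point is the upper bound $d_{CC}\tonde{g_1,g_2}\leq C_0\,d\tonde{g_1,g_2}+C_0$, which I would get by chaining short steps. A left-invariant Riemannian metric makes $\G$ homogeneous, hence complete, so by Hopf--Rinow the closed ball $\barr B=\barr B_d\tonde{e,1}$ is compact; by the continuity and finiteness of $d_{CC}\tonde{e,\cdot}$ there is a constant $C_0$ with $d_{CC}\tonde{e,h}\leq C_0$ for every $h\in\barr B$. Given $g_1,g_2$, left-invariance of both distances allows me to take $g_1=e$; if $g_2=e$ there is nothing to prove, so set $L=d\tonde{e,g_2}>0$, choose a unit-speed minimizing geodesic from $e$ to $g_2$ (Hopf--Rinow again), and subdivide it into $n=\lceil L\rceil$ equal arcs with endpoints $e=p_0,p_1,\dots,p_n=g_2$, so that $d\tonde{p_{j-1},p_j}=L/n\leq1$. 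Then $d\tonde{e,p_{j-1}^{-1}p_j}\leq1$, whence $d_{CC}\tonde{p_{j-1},p_j}=d_{CC}\tonde{e,p_{j-1}^{-1}p_j}\leq C_0$, and by the triangle inequality for $d_{CC}$,
\[ d_{CC}\tonde{e,g_2}\leq\sum_{j=1}^{n}d_{CC}\tonde{p_{j-1},p_j}\leq nC_0\leq\tonde{L+1}C_0=C_0\,d\tonde{e,g_2}+C_0. \]
Together with the previous paragraph, the identity map $\tonde{\G,d}\to\tonde{\G,d_{CC}}$ is a quasi-isometric embedding; being onto, it is a quasi-isometry.

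The only step beyond routine bookkeeping is the topology-and-finiteness claim of the first paragraph: the Chow--Rashevskii theorem together with the continuity of a bracket-generating sub-Riemannian distance in the manifold topology. This is precisely where the step-$2$ structure of $\G\tonde{m,\F}$ enters; everything afterwards is soft, and in particular no sharp asymptotics of $d_{CC}$ (homogeneous norms, two-sided ball--box bounds) are needed --- only that $d_{CC}$ stays bounded on one Riemannian ball around $e$. These are exactly the facts recorded in \cite[\S 2.6]{SchwRank1}.
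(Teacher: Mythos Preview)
Your argument is correct. The paper itself does not supply a proof of this lemma; it simply records the statement with a citation to \cite[\S 2.6]{SchwRank1}, so there is no in-paper approach to compare against. Your write-up fills in precisely the content the paper defers to that reference: the bracket-generating check for $\G\tonde{m,\F}$, Chow--Rashevskii for finiteness and the topology claim, the trivial direction $d_{CC}\geq c\,d$ from comparability of inner products on $V_e$, and the chaining bound $d_{CC}\leq C_0\,d+C_0$ via compactness of the unit Riemannian ball and left-invariance. Nothing is missing.
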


It is known from \cite{GroNilp} that finitely generated nilpotent groups have locally compact asymptotic cones. We now want to study asymptotic cones of $\pi_1 \tonde{N}$, where $N$ is the boundary of a piece modelled on a locally symmetric space $X$, which by Milnor-\v{S}varc Lemma are the same as asymptotic cones of $\G \tonde{X}$. The work of Pansu \cite{PansuCroissance} gives an explicit description for any asymptotic cone of a simply connected nilpotent Lie group $G$; it is a suitable nilpotent Lie group $G_{\infty}$ constructed from $G$ and endowed with a Carnot-Caratheodory metric. In our case, this description reduces to the following result (see \cite[(B), \S 49]{PansuCroissance}).

\begin{lem} \label{asConesNilp}
Asymptotic cones of a nilpotent Lie group $\G \tonde{m,\F}$ are all isometric to the group itself with a left-invariant Carnot-Caratheodory metric.
\end{lem}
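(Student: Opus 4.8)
The plan is to deduce Lemma \ref{asConesNilp} from Pansu's general theorem together with the special structure of $\G\tonde{m,\F}$. Pansu's theorem says that every asymptotic cone of a simply connected nilpotent Lie group $G$ is isometric to a certain graded nilpotent Lie group $G_\infty$ — the \emph{associated Carnot group} — equipped with a Carnot-Caratheodory metric built from the leading term of the word metric (equivalently, from a left-invariant Riemannian metric). So the whole task reduces to identifying $G_\infty$ when $G = \G\tonde{m,\F}$ and checking that the limit Carnot-Caratheodory metric is again a left-invariant Carnot-Caratheodory metric \emph{on the same group}. First I would recall how $G_\infty$ is constructed: one takes the lower central series $G = C^1 \supseteq C^2 \supseteq \cdots$, forms the associated graded Lie algebra $\mathrm{gr}(\mathfrak{g}) = \bigoplus_i C^i/C^{i+1}$, and $G_\infty$ is the simply connected Lie group with this graded Lie algebra.

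Next I would carry out this identification for $\G\tonde{m,\F}$. From the discussion already in the excerpt, $\G\tonde{m,\F}$ is $2$-step nilpotent with commutator subgroup and center both equal to $\Imm\F \cong \{0\}^m \times \Imm\F$. Hence $C^1 = \G\tonde{m,\F}$, $C^2 = \Imm\F$, $C^3 = \{e\}$. The associated graded Lie algebra is therefore $V_1 \oplus V_2$ with $V_1 \cong \mathfrak{g}/C^2 \cong \F^m$ (with $\F$ viewed as a real vector space) and $V_2 \cong C^2 \cong \Imm\F$, and the bracket $V_1 \times V_1 \to V_2$ is precisely the bilinear map induced by the group commutator, i.e. $(z,w)\mapsto \Imm\sum_i(\barr{z_i}w_i - \barr{w_i}z_i)$ up to the usual factor. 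But this is \emph{already} the Lie algebra of $\G\tonde{m,\F}$ itself: the defining group operation shows $\mathfrak{g}$ is graded in exactly this way, so $\G\tonde{m,\F}$ is its own associated Carnot group, $G_\infty \cong \G\tonde{m,\F}$. Consequently Pansu's limit metric is a left-invariant Carnot-Caratheodory metric on $\G\tonde{m,\F}$, and it only remains to invoke Lemma \ref{dCCsameTop} to note that this Carnot-Caratheodory metric is quasi-isometric — indeed, for the asymptotic-cone statement, the precise limit Carnot-Caratheodory metric is the one that appears — so all asymptotic cones are isometric to $\G\tonde{m,\F}$ with such a metric.

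The \textbf{main obstacle} is bookkeeping rather than conceptual: one must be careful that Pansu's theorem is stated for the word metric of a lattice (or for the Riemannian metric, via Milnor-\v{S}varc, as arranged in the excerpt), and that the Carnot-Caratheodory metric produced in the limit is the one associated to the subspace $V_1$ generating the distribution — which is exactly the $V$ of the Carnot-Caratheodory definition given just above in the excerpt. I would therefore take care to match the horizontal distribution $V_1 = \F^m$ in the graded algebra with the distribution $V = T_e\F^m$ used to define $d_{CC}$, so that the two Carnot-Caratheodory structures coincide. A secondary point is that different choices of ultrafilter, basepoint, and scaling sequence all give \emph{isometric} cones here: this follows because $\G\tonde{m,\F}$ acts transitively by isometries (killing basepoint dependence) and the graded group admits dilations (killing scaling dependence), so the cone is canonical. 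Since these are exactly the features Pansu's construction guarantees for nilpotent groups, no further work is needed, and the lemma follows.
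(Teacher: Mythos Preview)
Your proposal is correct and follows exactly the line the paper intends: the paper does not give its own proof of this lemma but simply cites Pansu \cite[(B), \S 49]{PansuCroissance}, noting that his general description of the asymptotic cone of a simply connected nilpotent Lie group as the associated Carnot group $G_\infty$ ``reduces'' in this case to $\G\tonde{m,\F}$ itself. Your write-up supplies the verification the paper omits, namely that $\G\tonde{m,\F}$ is already graded (its Lie algebra splits as $\F^m \oplus \Imm\F$ compatibly with the lower central series), so $G_\infty \cong \G\tonde{m,\F}$; this is precisely the content behind the paper's one-line citation. One small clean-up: the appeal to Lemma~\ref{dCCsameTop} at the end is not needed for the isometry statement---Pansu's theorem already produces the Carnot--Caratheodory metric on $G_\infty$ directly---though it is relevant if you want to conclude that the asymptotic cones of the \emph{Riemannian} metric on $\G\tonde{m,\F}$ agree with those of $d_{CC}$.
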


The following lemma will be useful in distinguishing cusp-decomposable manifolds.

\begin{lem} \label{DistNilpGroups}
Consider two nilpotent Lie groups $\G_1$ and $\G_2$, each being either some $\R^m$ or some $\G \tonde{m,\F}$ and endowed with a left-invariant Carnot-Caratheodory metric. Suppose that there is an open subset of $\G_1$ bi-Lipschitz homeomorphic to an open subset $\G_2$. Then $\G_1$ and $\G_2$ are isomorphic as Lie groups.
\end{lem}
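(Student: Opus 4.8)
The plan is to deduce the statement from Pansu's differentiation theorem for Lipschitz maps between Carnot groups. Note first that both $\G_1$ and $\G_2$ are Carnot (stratified nilpotent) groups for which $d_{CC}$ is the associated homogeneous metric: $\R^m$ is the abelian step-one group, whose horizontal distribution is all of $T_e\R^m$, so that $d_{CC}$ is the Euclidean metric; and for $\G\tonde{m,\F}$ the discussion in the preliminaries exhibits the two-step stratification with first layer $T_e\F^m$ (the distribution $V$ in the definition of $d_{CC}$) and second layer $T_e\Imm\F$, which is the centre. Consequently each $\G_i$ coincides with its own Pansu tangent cone at every point, the one-parameter family of dilations acting as graded Lie automorphisms.

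Now let $f\colon U_1\to U_2$ be an $L$-bi-Lipschitz homeomorphism between open subsets $U_i\subseteq\G_i$, taken with the restrictions of the ambient $d_{CC}$ metrics (which agree locally with the intrinsic length metrics of the $U_i$, so that Pansu's theorem applies). By the Pansu-Rademacher theorem, $f$ is Pansu-differentiable at Haar-almost every $p\in U_1$, and the Pansu differential $Df_p\colon\G_1\to\G_2$ is a homogeneous group homomorphism; since the $L$-bi-Lipschitz estimate passes to $Df_p$, the map $Df_p$ is injective. Being bi-Lipschitz, $f$ has the Luzin $N$ property in both directions, hence carries the null set where $f^{-1}$ fails to be Pansu-differentiable onto a null subset of $U_2$; so one can choose a single point $p$ at which both $Df_p$ and $D(f^{-1})_{f(p)}$ exist, and the chain rule for Pansu differentials then gives $D(f^{-1})_{f(p)}\circ Df_p=\mathrm{id}_{\G_1}$ together with the symmetric identity. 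Thus $Df_p$ is a bijective homogeneous homomorphism, i.e. an isomorphism of graded Lie groups, and in particular $\G_1\cong\G_2$ as Lie groups.

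The step I expect to require the most care is the measure-theoretic bookkeeping: making a single point $p$ simultaneously good for $f$ and $f^{-1}$ and checking that the Pansu chain rule is valid there — routine once the Luzin $N$ property for bi-Lipschitz maps is in hand, but it deserves an explicit line — together with the elementary remark that an invertible homogeneous homomorphism between Carnot groups is automatically a smooth graded Lie isomorphism, since it conjugates the two families of dilations. As a self-contained alternative I would at least mention the dimension argument: a bi-Lipschitz homeomorphism preserves both the topological dimension (by invariance of domain) and the Hausdorff dimension, and for a Carnot group with layers $V_i$ the Hausdorff dimension with respect to $d_{CC}$ equals $\sum_i i\dim V_i$; this is $m$, equal to the topological dimension, for $\R^m$, whereas for $\G\tonde{m,\F}$ with $k=\dim_\R\F$ it equals $mk+2k-2$ against a topological dimension of $mk+k-1$, and these differ because $k\in\graffe{2,4,8}$. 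Hence the two invariants determine the pair $\tonde{m,k}$, therefore $\F$ and $\G\tonde{m,\F}$ up to isomorphism, and also force the real and non-real cases to be distinguished.
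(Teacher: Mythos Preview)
Your proposal is correct, and you actually give two proofs. The paper's proof is precisely your ``self-contained alternative'' at the end: it observes that bi-Lipschitz homeomorphisms preserve both topological and Hausdorff dimension, computes that for $\R^m$ both equal $m$ while for $\G\tonde{m,\F}$ with $k=\dim_\R\F\in\graffe{2,4,8}$ they are $km+k-1$ and $km+2\tonde{k-1}$ respectively (citing Mitchell for the Hausdorff dimension of a CC metric), and notes that this pair of numbers determines $\tonde{m,k}$ and hence the group. That is the whole argument; there is no Pansu differentiation in the paper.

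Your primary route via the Pansu--Rademacher theorem is a genuinely different and more general argument: it would distinguish any two non-isomorphic Carnot groups, whereas the dimension count only works because the specific list $\R^m$, $\G\tonde{m,\mathbb{C}}$, $\G\tonde{m,\mathbb{H}}$, $\G\tonde{m,\mathbb{O}}$ happens to be separated by the pair of invariants. The cost is the extra machinery you flag yourself (Luzin $N$ property, finding a common point of Pansu differentiability for $f$ and $f^{-1}$, the chain rule there). For the purpose at hand the dimension argument is cleaner and entirely sufficient, so if you are aiming to match the paper you should promote your alternative to the main proof and drop the Pansu argument, or at most mention it as a remark.
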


\begin{proof}
Bi-Lipschitz homeomorphisms preserve topological and Hausdorff dimension. These dimensions are both $m$ for open subsets of $\R^m$. Take now $\F$ to be one of $\mathbb{C}$, $\mathbb{H}$ or $\mathbb{O}$ and let $k$ be the dimension of $\F$ as a real vector space. Then the topological dimension of an open subset of $\G \tonde{m,\F}$ is $km+k-1$ and the Hausdorff dimension is $km+2\tonde{k-1}$ \cite{MitchellCC}. The lemma easily follows.
\end{proof}

% \begin{lem}
% Let $G_1$ and $G_2$ be two simply connected nilpotent Lie groups and let $\Gamma_i$ be almost Bieberbach groups of isometries of $G_i$, respectively. Suppose that $\Gamma_1$ and $\Gamma_2$ are isomorphic, then $G_1$ and $G_2$ are Lie isomorphic too. SERVE?
% \end{lem}

\section{Metric structure of a cusp-decomposable manifold}

We now study the structure of a cusp-decomposable manifold $M$ as a metric space.

\subsection{The gluings}

Remember that a cusp-decomposable manifold $M$ is the result of gluing a finite number of pieces along some pairs of their infranil boundary components identified via affine diffeomorphisms. Lemma  \ref{isoFromAff} easily leads to the following

\begin{lem}
In a cusp-decomposable manifold all the pieces are locally isometric to the same symmetric geometry.
\end{lem}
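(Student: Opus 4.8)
The plan is to exploit the affine diffeomorphisms along which pieces are glued, together with the structure of the boundary components as infranil manifolds, to transfer the local geometry from one piece to an adjacent one, and then to propagate this across the whole (connected) manifold $M$.

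First I would recall that each boundary component $N$ of a piece $P$ is an infranil manifold modelled on a nilpotent Lie group $\G(X)$ determined by the $\F$-hyperbolic geometry of $P$: it is $\R^{n-1}$ in the real hyperbolic case, and $\G(\frac{n}{k}-1,\F)$ otherwise. By Lemma~\ref{asConesNilp}, the asymptotic cones of $\pi_1(N)$ (equivalently of $\G(X)$, by Milnor--\v{S}varc) are isometric to $\G(X)$ itself with a left-invariant Carnot--Caratheodory metric. Now suppose $P_1$ and $P_2$ are two pieces that share a glued boundary component, i.e.\ there is an affine diffeomorphism $\psi$ between a boundary component $N_1$ of $P_1$ and a boundary component $N_2$ of $P_2$. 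Lifting $\psi$ to universal covers gives an affine homeomorphism between $\G(X_1)$ and $\G(X_2)$ (using Lemma~\ref{isoFromAff} if one prefers to phrase this in terms of the induced isomorphism of almost-crystallographic groups); in particular it is bi-Lipschitz with respect to the Riemannian metrics, hence, by Lemma~\ref{dCCsameTop}, also with respect to the Carnot--Caratheodory metrics. Passing to asymptotic cones, this yields a bi-Lipschitz homeomorphism between $\G(X_1)$ and $\G(X_2)$ with their Carnot--Caratheodory metrics — indeed an honest bi-Lipschitz homeomorphism of the whole groups, so certainly one between open subsets. Lemma~\ref{DistNilpGroups} then forces $\G(X_1)\cong\G(X_2)$ as Lie groups.

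It remains to deduce that $X_1$ and $X_2$ have the same symmetric geometry from $\G(X_1)\cong\G(X_2)$. This is immediate from the classification: the four possibilities for $\G(X)$ are $\R^{n_1-1}$ (real hyperbolic), $\G(\frac{n}{2}-1,\mathbb{C})$, $\G(\frac{n}{4}-1,\mathbb{H})$ and $\G(\frac{15}{8},\mathbb{O})=\G(1,\mathbb{O})$ (where $n=16$), and these are pairwise non-isomorphic: the abelian one is distinguished by being abelian, and the three non-abelian ones are distinguished, say, by the dimension of the center $\Imm\F$, which is $1$, $3$, $7$ respectively. Moreover within the real hyperbolic case $\R^{n_1-1}\cong\R^{n_2-1}$ forces $n_1=n_2$, and similarly in the other cases the isomorphism type of $\G(m,\F)$ determines both $m$ and $\F$, hence $n$; so the two pieces are locally isometric to the same symmetric space, not merely to the same family. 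Finally, since $M$ is connected, any two pieces of $M$ are joined by a finite chain of pieces each consecutive pair of which shares a glued boundary component (one glued pair of boundary components per edge of the "dual graph" of the decomposition), so applying the above to each link in the chain and transitivity of "$\cong$ as Lie groups" gives the conclusion for all pieces of $M$ simultaneously.

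The only genuinely delicate point is making sure the affine gluing diffeomorphism really does induce a bi-Lipschitz map of the appropriate nilpotent groups at the level of asymptotic cones: one must check that an affine homeomorphism of simply connected nilpotent Lie groups (composition of a Lie group isomorphism with a left translation) is bi-Lipschitz for the left-invariant Riemannian metrics — which is clear since left translations are isometries and a Lie group isomorphism is a diffeomorphism commuting with the respective lattice/almost-crystallographic actions, hence descends to a map of compact manifolds and is therefore bi-Lipschitz on the covers — and then invoke Lemma~\ref{dCCsameTop} to pass to the Carnot--Caratheodory metrics and Lemma~\ref{asConesNilp} to pass to cones. Everything else is bookkeeping with the classification of the groups $\G(m,\F)$ and the connectedness of $M$.
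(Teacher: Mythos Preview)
Your argument is correct, but it takes a detour that the paper avoids. You yourself observe, in the last paragraph, that the lift of the affine gluing diffeomorphism is by definition the composition of a Lie group \emph{isomorphism} with a left translation. That observation \emph{already} yields $\G(X_1)\cong\G(X_2)$ as Lie groups, so the subsequent passage through bi-Lipschitz maps, Carnot--Caratheodory metrics, asymptotic cones, and Lemma~\ref{DistNilpGroups} is redundant: you are using heavy machinery to recover an isomorphism you had in hand from the start. The paper's proof is accordingly just the one-line remark that the lemma follows from Lemma~\ref{isoFromAff} (equivalently, from the definition of an affine map between infranil manifolds): the gluing diffeomorphism lifts to an affine isomorphism of the covering nilpotent Lie groups, and then the classification of the groups $\G(m,\F)$ and connectedness of $M$, exactly as you spell them out, finish the job.

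A minor imprecision worth noting: Lemma~\ref{dCCsameTop} only asserts that the identity is a \emph{quasi-isometry} between $d_R$ and $d_{CC}$, not a bi-Lipschitz equivalence, so your phrase ``hence \dots\ also with respect to the Carnot--Caratheodory metrics'' is not justified as written. This does not actually harm the argument, since a quasi-isometry becomes bi-Lipschitz after passing to asymptotic cones, which is all you need for Lemma~\ref{DistNilpGroups}; but the cleanest fix is simply to delete the detour altogether.
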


We can then denote by $\G \tonde{M}$ the nilpotent Lie group covering the boundary components of pieces of $M$.

We now want to put a Riemannian metric on a cusp-decomposable manifold. By the Milnor-\v{S}varc Lemma two such metrics, when lifted to the universal cover, will lead to distance functions quasi-isometric to the fundamental group, and thus quasi-isometric to each other. However, we want to choose a specific metric which is more confortable for some of the proofs below.

Remember that cusps of a piece $P$ are diffeomorphic to a product $N \times \sx[ 0, +\infty \dx)$, where $N$ is an infranil manifold. We can choose this identification in such a way that for each $a \geq 0$ the layer $N \times \graffe{a}$ is covered by horospheres in $\wtde{P}$. Now let $N_1 \times \sx[ 0, +\infty \dx)$ and $N_2 \times \sx[ 0, +\infty \dx)$ be two cusps which we wish to glue in $M$ along a diffeomorphism $\phi \colon N_1 \to N_2$; let $g_1$ and $g_2$ be the respective metric tensors on the two cusps. We remove $N_i \times \sx[ 3, +\infty \dx)$ from $N_i \times \sx[ 0, +\infty \dx)$ for $i=1$, $2$ and then identify $\tonde{n,a}$ with $\tonde{\phi \tonde{n},3-a}$ for each $n$  in $N_1$ and each $a$ strictly between $0$ and $3$. If a cusp $N' \times \sx[ 0, +\infty \dx)$ is not glued to another, we only remove $N' \times \sx( 3, +\infty \dx)$. The result of all these operations is obviously diffeomorphic to $M$.

Let $\psi \colon \tonde{0,3} \to \R$ be a smooth function constantly equal to $1$ on $\sx( 0, 1 \dx]$, strictly decreasing in $\quadre{1,2}$ and constantly equal to $0$ on $\sx[ 2,3 \dx)$. On the complement of identified portions of pieces we put the Riemannian metric of the piece. On a point $\tonde{p,a} \sim \tonde{\phi \tonde{p},3-a}$ of the identified portions we set (with some abuse of notation) the Riemannian metric to be equal to $\psi\tonde{a}\restr{g_1}{\tonde{p,a}} + \psi\tonde{3-a}\restr{g_2}{\tonde{\phi\tonde{p},3-a}}$.

\subsection{The geometry of the universal cover}

\label{ch-W-TW}

Let $M$ be a cusp-decomposable manifold and let $P$ be a piece of $M$. Denote by $\wtde{M}$ the universal cover of $M$. The piece $P$ is obtained from a finite volume non-compact locally symmetric manifold $X$, whose cusps are of the form $N \times \sx[ 0, +\infty \dx)$. The part $N \times \sx( 3, +\infty \dx)$ of a cusp is removed during the construction of $M$, and the collar $N \times \sx[ 0, 3 \dx]$ of the resulting boundary component might or might not be identified with another such collar. In the second case we get a boundary component $N \times \sx\{ 3 \dx\}$ of $M$.

We call a connected component of the preimage in $\wtde M$ of $X$ without the union of its cusps of the form $N \times \sx( 0, +\infty \dx)$ a \emph{chamber}, and a connected component of the preimage of a collar $N \times \sx[ 0, 3 \dx]$ a \emph{wall} adjacent to the chamber. A chamber intersects a wall adjacent to it in one or two connected components of the preimage of $N \times \sx\{ 0 \dx\}$. One such component will be called \emph{thin wall}. With the Riemannian metric restricted to the chamber, a thin wall is a horosphere in the chamber itself, and is thus metrically convex. A wall covering the collar of a boundary component in $M$ is a collar of a boundary component in $\wtde{M}$, and will be called \emph{boundary wall}.

Consider the metric tensor on $M$ defined at the end of the previous subsection. It lifts to the universal cover $\wtde M$ via pull-back. If not explicitly stated otherwise, from now on we will consider on $\wtde M$ the path distance induced by this metric tensor, and on submanifolds, such as chambers, walls and thin walls, the path distance induced by the restriction of this Riemannian metric. This distance is greater than or equal to the restriction of the distance of $\wtde{M}$.

\begin{lem} \label{finIsoClasses}
In the universal cover $\wtde{M}$ of a cusp-decomposable manifold $M$ there is only a finite number of isometry classes of chambers, walls and thin walls.
\end{lem}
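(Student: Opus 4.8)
The plan is to derive the statement from two facts. The first is that a cusp-decomposable manifold $M$ is assembled from only a \emph{finite} number of pieces, and hence has only finitely many cusps, collars $N\times[0,3]$ and boundary horospheres $N\times\{0\}$. The second is that $\pi_1(M)$, acting on $\wtde M$ by deck transformations, acts \emph{by isometries} and \emph{permutes transitively the connected components of the preimage of any connected subset of $M$}; the transitivity is the standard observation that if $p\colon\wtde M\to M$ is the universal cover and $A\subseteq M$ is connected then $p$ restricted to any connected component $C$ of $p^{-1}(A)$ still surjects onto $A$ (its image is clopen in $A$), so given two components $C$, $C'$ one picks $x\in A$, lifts $\wtde x\in C$ and $\wtde x'\in C'$, and the unique deck transformation $g$ with $g\wtde x=\wtde x'$ carries $C$ onto $C'$.

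With these in hand I would first treat chambers. By definition every chamber is a connected component of the preimage in $\wtde M$ of $X$ with its open cusps $N\times(0,+\infty)$ removed, where $X$ is the locally symmetric manifold underlying one of the finitely many pieces of $M$. For a fixed piece the chambers lying over it form a single $\pi_1(M)$-orbit, and each deck transformation $g$ is a global isometry of $\wtde M$ for the Riemannian metric pulled back from $M$; it therefore carries a chamber $C$ diffeomorphically onto $gC$ intertwining the metric tensors, hence restricts to an isometry between $C$ and $gC$ for the \emph{intrinsic} path distances (these being determined by the restriction of the ambient Riemannian metric). Thus the number of isometry classes of chambers is at most the number of pieces of $M$. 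One could alternatively identify each chamber with the neutered space covering the corresponding piece, using $\pi_1$-injectivity of pieces, but the orbit argument sidesteps that.

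The same argument applies to walls and thin walls. A wall is a connected component of the preimage of one of the collars $N\times[0,3]$ used in the construction (or of a glued pair of such collars), and a thin wall is a connected component of the preimage of a boundary horosphere $N\times\{0\}$ of a piece; since each of the finitely many pieces has finitely many cusps, there are finitely many collars and finitely many boundary horospheres in $M$. For a fixed collar, resp.\ a fixed horosphere, $\pi_1(M)$ acts transitively by isometries on the components of its preimage, so the numbers of isometry classes of walls and of thin walls are bounded by the numbers of collars and of boundary horospheres of $M$ respectively, both finite. I do not expect any real obstacle here: the lemma is essentially a bookkeeping consequence of the finiteness of the number of pieces, the one point worth stating with care being that a deck transformation, being a Riemannian isometry of $\wtde M$ that preserves the classes of chambers, walls and thin walls, restricts to an isometry for the intrinsic path metrics on each of these submanifolds.
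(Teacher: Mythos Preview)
Your argument is correct and is exactly the approach the paper takes: the paper's proof is the single sentence ``The proof follows easily from the finiteness of pieces in $M$'', and your write-up is a careful unpacking of that sentence via the transitive isometric action of $\pi_1(M)$ on the components of each relevant preimage. There is nothing to add or correct.
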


\begin{proof}
The proof follows easily from the finiteness of pieces in $M$.
\end{proof}

We introduce now some structural constants of $\wtde{M}$ which will turn useful in the sequel.

\begin{lem} \label{DWDTW}
There is a constant $D_W$ depending only on $\wtde{M}$ such that the distance between every pair of distinct walls is at least $D_W$.
\end{lem}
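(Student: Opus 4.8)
The plan is to reduce the statement to a compactness argument downstairs in $M$. First I would recall that a wall in $\wtde M$ is a connected component of the preimage of a collar $N \times [0,3]$ of some (possibly boundary) cusp of a piece $P$ of $M$. Two distinct walls in $\wtde M$ either cover collars of different cusps of $M$, or cover the same collar $N \times [0,3]$ but are distinct lifts of it. In either case, the key observation is that two distinct walls are disjoint: walls only meet chambers, not each other, because in the construction of the Riemannian metric the identified collars $N_1 \times [0,3]$ and $N_2 \times [0,3]$ are glued to each other along their thin-wall ends $N_i \times \{0\}$ sitting inside chambers, so a single wall of $M$ is the image of $N \times [0,3]$ (or the glued double $N_1\times[0,3]\cup N_2\times[0,3]$), and its preimage components in $\wtde M$ are pairwise disjoint, each separated from the others by chamber interiors.

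Next I would pass to the quotient. The collection of all walls in $\wtde M$ is $\pi_1(M)$-invariant, and the (finitely many, by Lemma \ref{finIsoClasses}) $\pi_1(M)$-orbits of walls correspond to the finitely many walls of $M$. Fix a wall $W_0$ in $\wtde M$. For any wall $W' \ne W_0$, the distance $d(W_0, W')$ is realized (or approximated) by a geodesic segment leaving $W_0$, and since $W_0$ and $W'$ are disjoint closed subsets with an intervening chamber interior, $d(W_0,W') > 0$. The issue is to bound this away from $0$ uniformly over all pairs. Here I would use that any geodesic from $W_0$ to a distinct wall must cross a ``buffer'' region: precisely, it must pass through the part of the adjacent chamber lying between the thin wall $W_0 \cap C$ and any other thin wall of the same chamber $C$, or through the region $\psi$ interpolates over, i.e. through at least a definite amount of the collar coordinate. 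Concretely, the layers $N \times \{a\}$ for $a$ near the gluing value are horospheres, and consecutive distinct walls meeting a common chamber $C$ are separated inside $C$ by a distance bounded below by a constant depending only on the isometry type of $C$; taking the minimum over the finitely many isometry types of chambers (Lemma \ref{finIsoClasses}) yields $D_W$.

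I expect the main obstacle to be making the ``buffer region'' argument clean: one must rule out the possibility that $W_0$ and $W'$ are adjacent to a common chamber and come arbitrarily close within that chamber. This is where the finite number of isometry classes of chambers (Lemma \ref{finIsoClasses}) is essential — within a single chamber, the finitely many thin walls are pairwise disjoint convex horospheres at positive distance, so their distances are bounded below, and the collars attached to them add only more distance; the infimum over isometry types is attained and positive. A subtlety is that a chamber can have infinitely many walls adjacent to it, so one cannot simply take a minimum over walls of a fixed chamber; instead one uses that all adjacent thin walls in a given chamber are $\pi_1(P)$-translates of finitely many, hence their mutual distances take finitely many values, all positive. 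Setting $D_W$ to be the minimum of all these finitely many positive values (over all chamber isometry types) completes the proof.
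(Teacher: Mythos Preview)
Your overall strategy --- reduce to pairs of walls adjacent to a common chamber, then bound from below the distance between their thin walls in that chamber --- is sound, and your opening sentence (``reduce to a compactness argument downstairs in $M$'') is exactly what the paper does in one line: $D_W$ is the length of the shortest path in $M$ with endpoints on collars and not homotopic rel endpoints into a single collar, which is positive by compactness of $M$.

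However, your execution of the final step contains a genuine gap. You write that ``all adjacent thin walls in a given chamber are $\pi_1(P)$-translates of finitely many, hence their mutual distances take finitely many values.'' This inference is false. Finitely many $\pi_1(P)$-orbits of horospheres does \emph{not} imply finitely many pairwise distances: even when there is a single orbit (one cusp), the distances between distinct lifted horospheres form an infinite, unbounded set, since there are infinitely many $\pi_1(P)$-orbits of \emph{pairs} of horospheres. What you need is only that the \emph{infimum} of these distances is positive, and that holds for exactly the reason you announced and then did not use: the infimum over pairs of thin walls in $C \cong \wtde P$ equals the infimum of lengths of arcs in the compact piece $P$ joining boundary points and not homotopic rel endpoints into a boundary collar; compactness of $P$ makes this positive. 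Then minimize over the finitely many pieces.

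So the fix is simply to replace the ``finitely many values'' claim with the compactness argument in $P$ (or directly in $M$, as the paper does). Once that is done, your proof is correct and amounts to an unpacked version of the paper's argument, routed through the reduction to a single chamber.
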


\begin{proof}
The constant $D_W$ is the length of the shortest path in $M$ having endpoints on collars separating pieces and not homotopic into them relatively to the endpoints. The thesis follows by compactness of $M$.
\end{proof}

\begin{cor} \label{locFinWalls}
The family of walls in $\wtde{M}$ is locally finite, i.e. every closed ball $\mathcal{B}$ in $\wtde{M}$ intersects only a finite number of them.
\end{cor}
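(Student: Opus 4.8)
The plan is to argue by contradiction, using the uniform separation of walls from Lemma \ref{DWDTW} together with local compactness of $\wtde{M}$. First I would recall that $M$, being a compact manifold equipped with the Riemannian metric constructed above, has as universal cover a complete Riemannian manifold $\wtde{M}$ (the metric lifts by pull-back, and completeness passes to covers). Hence, by the Hopf--Rinow Theorem, every closed ball $\mathcal{B}$ in $\wtde{M}$ is compact.

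Now suppose, towards a contradiction, that some closed ball $\mathcal{B}$ intersects infinitely many walls. Then I can choose a sequence $W_1, W_2, \dots$ of pairwise distinct walls, each meeting $\mathcal{B}$, and for every $i$ a point $p_i \in W_i \cap \mathcal{B}$. Since $\mathcal{B}$ is compact, the sequence $\tonde{p_i}$ admits a convergent subsequence, and in particular a Cauchy subsequence; so there exist indices $i \neq j$ with $d\tonde{p_i,p_j} < D_W$, where $D_W$ is the constant provided by Lemma \ref{DWDTW}.

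But $p_i$ lies in $W_i$ and $p_j$ lies in $W_j$, with $W_i \neq W_j$, so $d\tonde{W_i,W_j} \leq d\tonde{p_i,p_j} < D_W$, contradicting Lemma \ref{DWDTW}. Therefore $\mathcal{B}$ can meet only finitely many walls, which is exactly the assertion that the family of walls is locally finite.

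I do not expect any genuine obstacle here: the corollary is essentially immediate once Lemma \ref{DWDTW} is in hand. The only point deserving a line of care is the interplay between the two distances mentioned in Subsection~\ref{ch-W-TW} --- the path distance on $\wtde{M}$ and the (larger) restricted distances on submanifolds --- but since Lemma \ref{DWDTW} bounds from below the \emph{ambient} distance $d\tonde{W_i,W_j}$ and the points $p_i$ are compared in this same ambient metric, the argument is self-contained.
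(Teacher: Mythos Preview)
Your proof is correct and follows essentially the same approach as the paper's: both use compactness of the closed ball (via completeness and Hopf--Rinow) together with the uniform lower bound $D_W$ from Lemma~\ref{DWDTW}. The paper phrases the final step as a packing argument with disjoint balls of radius $D_W/2$ centered at one point per wall, while you extract a Cauchy subsequence to produce two points closer than $D_W$; these are two standard and interchangeable ways of saying that a compact set cannot contain infinitely many points pairwise separated by a fixed positive distance.
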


\begin{proof}
The ball $\mathcal{B}$ is compact. For every wall intersecting $\mathcal{B}$ take a point in the intersection and consider an open ball of radius $\frac{D_W}{2}$ centered in it. All these balls are disjoint by the previous lemma; by compactness, they are finite.
\end{proof}

The way the gluings are performed allows us to describe the fundamental group of a cusp-decomposable manifold $M$. Remember that fundamental groups of the boundary components of a piece embed into the fundamental group of the piece itself.

\begin{defin}
In the fundamental group of a piece of $M$ consider the family of the images of the fundamental groups of boundary components and their conjugates. We will call them \emph{cusp subgroups}. These subgroups also embed in the fundamental group of $M$. We will refer to their images and the conjugates of their images inside the whole $\pi_1 \tonde{M}$ with the same name.
\end{defin}

The following lemma is an immediate consequence of the definitions.

\begin{lem} \label{FundGroupGraph}
The fundamental group of $M$ is the fundamental group of a finite graph of groups where any vertex group is the fundamental group of a piece, and any edge group embeds as a cusp subgroup in the vertex groups associated to the endpoints of the edge.
\end{lem}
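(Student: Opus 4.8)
The statement to prove is Lemma~\ref{FundGroupGraph}: that $\pi_1(M)$ is the fundamental group of a finite graph of groups whose vertex groups are fundamental groups of pieces and whose edge groups are cusp subgroups. The plan is to build the underlying graph $\mathcal{G}$ directly from the combinatorial data of the gluing and then invoke the standard Seifert--van Kampen theorem for graphs of spaces.

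First I would set up the graph $\mathcal{G}$. Recall that $M$ is obtained by taking finitely many pieces $P_1,\dots,P_k$ and gluing a subset of their boundary components in pairs via affine diffeomorphisms; two glued boundary components may belong to the same piece. So I put one vertex $v_j$ for each piece $P_j$, and one edge for each gluing pair; an edge is a loop at $v_j$ precisely when both glued boundary components lie in $P_j$. This graph is finite since there are finitely many pieces and finitely many boundary components. I would also note that the collar neighbourhoods $N \times [0,3]$ used in the metric construction give a natural way to thicken this combinatorial picture: $M$ deformation retracts onto a union of the (thickened) pieces glued along the (thickened) identified collars, so up to homotopy $M$ is a graph of spaces over $\mathcal{G}$ with vertex spaces the pieces and edge spaces the boundary components $N$ that get glued.

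Next I would apply the graph-of-groups version of van Kampen. Since each boundary component $N$ of a piece $P$ has $\pi_1(N) \to \pi_1(P)$ injective --- this was recorded when the pieces were defined (``for any boundary component $N$ the induced map $\pi_1(N) \to \pi_1(P)$ is injective'') --- the edge maps are injective, so the graph-of-spaces decomposition yields an honest graph-of-groups structure on $\pi_1(M)$: vertex group $\pi_1(P_j)$, edge group $\pi_1(N_e)$ for the boundary component $N_e$ underlying edge $e$, and the two edge-to-vertex inclusions are the maps $\pi_1(N_e) \hookrightarrow \pi_1(P_j)$ coming from the two boundary components identified by that gluing. (One must choose basepoints and paths in a maximal tree of $\mathcal{G}$ to make the two sides of each edge match up; the affine diffeomorphism $\phi\colon N_1 \to N_2$ realizing the gluing induces the identification of the two edge groups, so the conjugating elements absorb the discrepancy.) By the very definition of cusp subgroups, the image of each $\pi_1(N_e)$ inside $\pi_1(P_j)$ and inside $\pi_1(M)$ is a cusp subgroup, which is exactly the claimed conclusion.

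The only real subtlety --- and the step I would be most careful about --- is the bookkeeping with basepoints and the maximal tree, together with checking that the homotopy model of $M$ as a graph of spaces is legitimate given the specific metric construction: the identification glues $(n,a) \sim (\phi(n), 3-a)$ for $a \in (0,3)$, so the overlap of two pieces is an open collar $N \times (0,3)$ rather than a closed one, but this open collar deformation retracts onto $N \times \{3/2\}$, whose $\pi_1$ is $\pi_1(N)$, so the van Kampen hypotheses (open cover, path-connected overlaps) are satisfied and the computation goes through. Since the excerpt itself flags this as ``an immediate consequence of the definitions,'' I would keep the write-up short: state the graph $\mathcal{G}$, invoke the standard structure theorem for the fundamental group of a graph of spaces with injective edge maps (e.g. as in Serre or Scott--Wall), and identify the edge groups with cusp subgroups by definition.
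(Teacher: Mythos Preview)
Your proposal is correct and is exactly the standard argument one would write here. The paper itself gives no proof: it simply records the lemma as ``an immediate consequence of the definitions'', so your sketch via the graph-of-spaces version of Seifert--van~Kampen, with injectivity of $\pi_1(N)\to\pi_1(P)$ supplying the edge-group embeddings, is precisely what is being implicitly invoked.
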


At the universal cover level, this graph structure leads us to the following

\begin{defin} \label{BassSerreTree}
The Bass-Serre tree of a cusp-decomposable manifold $M$ is a graph with a vertex for any chamber in $\wtde{M}$ and an edge between two vertices if and only if there is a wall touching both the corresponding chambers.
\end{defin}

We now explore the geometric structure of the universal cover of $M$.

\begin{lem} \label{biLipWalls} %POTREBBE ESSERE UTILE OPPURE NO, NEL CASO LASCIARE
There are real constants $K_P$, $K_E$ greater than or equal to $1$ and depending only on $\wtde{M}$ such that:
\begin{itemize}
\item Every wall of $\wtde{M}$ is $K_P$-bi-Lipschitz homeomorphic to the product of $\G \tonde{M}$ with a real interval;
\item Every thin wall is $K_E$-bi-Lipschitz embedded in its wall.
\end{itemize}
\end{lem}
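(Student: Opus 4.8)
The plan is to treat the two bullet points separately, reducing each to a finiteness-plus-compactness argument built on the structural lemmas already established. For the first bullet: a wall $W$ covers a collar $N \times \sx[0,3\dx]$ of a boundary component, and by Lemma~\ref{finIsoClasses} there are only finitely many isometry types of walls, so it suffices to produce, for a single wall, a bi-Lipschitz homeomorphism onto $\G \tonde{M} \times \quadre{0,3}$ (or onto $\G \tonde{M} \times I$ for the appropriate interval), then take $K_P$ to be the maximum of the finitely many constants obtained. The universal cover of $N \times \sx[0,3\dx]$ is $\wtde N \times \quadre{0,3}$ where $\wtde N$ is a horosphere; by the discussion in the locally symmetric spaces subsection, $\wtde N$ carries a Riemannian structure isometric to $\G \tonde{M}$ with a left-invariant metric. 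The only subtlety is that the metric we put on $M$ near a glued collar is the $\psi$-interpolated metric $\psi\tonde{a}g_1 + \psi\tonde{3-a}g_2$ rather than a product metric; but on $\sx(0,1\dx]$ it agrees with $g_1$, the interpolating factors are bounded between $0$ and $1$ away from a compact set of parameters, and the two tensors $g_1, g_2$ restricted to the compact collar are mutually bounded, so the interpolated metric is bi-Lipschitz equivalent to the product metric $g_1$ with uniform constants. Hence the identity map $\wtde N \times \quadre{0,3} \to \wtde N \times \quadre{0,3}$, viewed from the interpolated metric to the product metric, is bi-Lipschitz, and composing with the isometry $\wtde N \cong \G \tonde{M}$ finishes this bullet.

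For the second bullet: a thin wall $T$ is a horosphere sitting inside its wall $W$, equivalently a layer $\wtde N \times \graffe{0}$ inside $\wtde N \times \quadre{0,3}$. Using the path distance on $T$ induced from the ambient chamber metric (as fixed in the subsection on the geometry of the universal cover), $T$ with its intrinsic metric is again isometric to $\G \tonde{M}$ with a left-invariant metric, so it is a homogeneous space; the inclusion $T \hookrightarrow W$ is $1$-Lipschitz for the length metrics (the ambient length metric can only be smaller), so the content is the reverse inequality: distances measured inside $W$ between two points of $T$ are bounded below by a constant times their distance inside $T$. This is a distortion estimate for the horosphere inside the collar. I would deduce it by again invoking Lemma~\ref{finIsoClasses}: there are finitely many isometry types of pairs $(W, T)$, so it is enough to check that for each such pair the embedding has finite distortion, and then take $K_E$ to be the maximum. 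Finiteness of distortion for a single pair follows because $T$ is a connected, totally geodesic-like (horospherical) submanifold of the complete manifold $W$ meeting every component in a convex set, and $W$ deformation retracts onto $T$ with uniformly Lipschitz retraction along the collar direction: a path in $W$ between points of $T$ projects to a path in $T$ of length at most a constant multiple, where the constant comes from the bounded geometry of the collar (the $a$-direction has bounded length $3$ and the metric restricted to each layer is uniformly comparable to the metric on $T$, by the same $\psi$-interpolation boundedness used above).

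The main obstacle, and where I would spend the most care, is the second bullet: controlling the \emph{intrinsic} distance distortion of the thin wall $T$ inside $W$ when $W$ carries the glued (non-product) metric. One cannot simply say "project orthogonally onto $T$," because after interpolation the collar foliation need not be metrically a product; the right move is to observe that the coordinate projection $\wtde N \times \quadre{0,3} \to \wtde N \times \graffe{0}$ is smooth, and its differential, measured between the interpolated metric upstairs and the intrinsic metric on $T$ downstairs, has operator norm bounded above and below uniformly over the compact collar and over the finitely many piece-types — this is exactly the kind of estimate that compactness of $M$ (Lemma~\ref{DWDTW}'s proof strategy) delivers. Once that bounded-bi-Lipschitz statement for the projection is in hand, both the Lipschitz bound and its inverse for the inclusion $T \hookrightarrow W$ follow by the standard argument: lengths of paths are distorted by at most the norm bounds of the projection, so distances are too.

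A minor point worth flagging in the write-up: for non-glued (boundary) walls the collar $N' \times \sx[0,3\dx]$ keeps the original piece metric throughout, so there the product structure is exact and the estimates are trivial; only the glued walls require the $\psi$-interpolation bookkeeping, and I would state this reduction explicitly so the reader sees that the hard case is confined to a compact region with the interpolated metric.
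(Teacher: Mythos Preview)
Your argument is correct. For the first bullet you are doing exactly what the paper does --- it records only that ``the first statement readily follows from the compactness of $M$,'' and your paragraph simply unpacks that sentence via Lemma~\ref{finIsoClasses} and the observation that the $\psi$-interpolated tensor is uniformly comparable to the product tensor on the compact quotient collar.

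For the second bullet you take a genuinely different route. The paper argues as in \cite[Lemma 7.2]{FLSgraph}: the group $\pi_1(N)$ acts cocompactly by isometries on both the thin wall $T$ and the wall $W$, so by the Milnor--\v{S}varc lemma the equivariant inclusion $T \hookrightarrow W$ is a $(K,c)$-quasi-isometric embedding, which one then upgrades to bi-Lipschitz by handling small scales with cocompactness. Your approach instead observes directly that the coordinate projection $W \to T$ is Lipschitz (because the layer metrics $h(a)$ on $\wtde N$ for $a \in [0,3]$ are uniformly comparable to $h(0)$, by compactness of $N \times [0,3]$), and then any path in $W$ joining two points of $T$ projects to a path in $T$ of controlled length. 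Your argument is more elementary and avoids the two-scale bookkeeping; the paper's route has the advantage of being a one-line citation once the reader knows the FLSgraph lemma. One small wording fix: the differential of the projection has a kernel (the interval direction), so ``bounded below'' is not literally true; what you need, and what your argument actually uses, is only the upper bound on the operator norm, which yields the Lipschitz constant for the projection and hence the reverse inequality $d_T \leq K_E\, d_W$.
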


\begin{proof}
%By construction, every wall covers a compact submanifold of $M$ diffeomorphic to $N \times \quadre{0,1}$, where $N$ is an infranil manifold covered by $\G \tonde{M}$, so there is a Riemannian metric $g'$ on the product that makes it isometric to $N \times \quadre{0,1}$. The constant $K_P$ is the maximum of the ratios between lengths of the same tangent vector to some point in some wall with respect to the metric of $\wtde{M}$ and the lifting of $g'$ to the universal cover, and viceversa; it is finite by compactness of $M$.
The first statement readily follows from the compactness of $M$.

The proof of the second part is analogous to that of the Lemma 7.2 in \cite{FLSgraph} and uses Milnor-\v{S}varc Lemma. The only difference is that here we have a $\pi_1 \tonde{N}$ action instead of a $\mathbb{Z}^{n-1}$ action, where $N$ is the infranil manifold defined as above.
\end{proof}

\subsection{Large scale structure and relative hyperbolicity}

In this section we study the large scale geometry of the universal cover $\wtde{M}$ of a cusp-decomposable manifold $M$. Our results will descend from the study of asymptotic cones of $\wtde{M}$. Thanks to Milnor-\v{S}varc Lemma and Lemma \ref{QItoBL}, these cones are bi-Lipschitz homeomorphic to suitable asymptotic cones of $\pi_1 \tonde{M}$.

In $\wtde{M}$ we have the family of its walls. Consider an asymptotic cone $\CG\wtde{M}$ of $\wtde{M}$. Via Definition \ref{asFamily}, the family of walls gives rise to the family of asymptotic walls in $\CG\wtde{M}$.

In the fundamental group $\pi_1\tonde{M}$ we choose a representative for each conjugacy class of cusp subgroups. They form a finite family $\mathcal{H}$. Consider now the family of all the left cosets of subgroups in $\mathcal{H}$. The quasi-isometry of $\pi_1 \tonde{M}$ with $\wtde{M}$ given by Milnor-\v{S}varc lemma takes any such coset at a finite Hausdorff distance from a wall of $\wtde{M}$. Viceversa, the fact that every conjugacy class of cusp subgroups has a representative in $\mathcal{H}$ tells us that every wall is at a finite Hausdorff distance from the image of a left coset of a group in $\mathcal{H}$. So, aside with asymptotic walls, we consider the family of asymptotic cosets of the subgroups in $\mathcal{H}$ in an asymptotic cone of $\pi_1 \tonde{M}$. 

\begin{lem}
The union of the asymptotic walls in an asymptotic cone of $\wtde{M}$ is the whole cone, and the union of the asymptotic cosets in an asymptotic cone of $\pi_1 \tonde{M}$ is again the whole cone.
\end{lem}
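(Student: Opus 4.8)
The plan is to deduce both claims from the observation recorded right after Definition~\ref{asFamily}: if there is a constant $R$ such that $d\tonde{x,\bigcup\mathcal{A}}\leq R$ for every point $x$ of a metric space $E$, then $\CG E$ is the union of the sets in $\CG\mathcal{A}$. For completeness I would include the short verification: given $\quadre{y_i}\in\CG E$, pick for each $i$ a point $a_i\in\bigcup\mathcal{A}$ with $d\tonde{y_i,a_i}\leq R+1$ and a set $A_i\in\mathcal{A}$ containing $a_i$; since $\tonde{\lambda_i}$ is not $\UF$-a.e.\ bounded, $\UF-\lim\lambda_i^{-1}d\tonde{y_i,a_i}=0$, so $\quadre{a_i}=\quadre{y_i}$, while $\lambda_i^{-1}d\tonde{x_i,A_i}\leq\lambda_i^{-1}\tonde{d\tonde{x_i,y_i}+R+1}$ is $\UF$-a.e.\ bounded, so $\quadre{a_i}$, and hence $\quadre{y_i}$, lies in the set of $\CG\mathcal{A}$ associated with the sequence $\tonde{A_i}$. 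It therefore suffices to show, in each of the two settings, that the relevant family is coarsely dense with a uniform constant.

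For the cosets this is immediate: the left cosets of any single subgroup $H\in\mathcal{H}$ partition $\pi_1\tonde{M}$, so their union is all of $\pi_1\tonde{M}$ and one may take $R=0$ (or $R=\frac12$ when working in the Cayley graph, every point of which is within $\frac12$ of a vertex). Hence the asymptotic cosets cover $\CG\pi_1\tonde{M}$.

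For the walls the key input is the compactness of $M$. The collars $N\times\sx[0,3\dx]$ appearing in the construction of $M$, namely those along which pieces are glued together with the collar neighbourhoods of $\partial M$ coming from the remaining cusps, form a finite family of compact submanifolds; hence, $M$ being compact, the continuous function sending $x\in M$ to its distance from the union of these collars attains a finite maximum $R_0$. Given $\tilde x\in\wtde{M}$ lying over $x\in M$, choose a path $\sigma$ in $M$ from $x$ to the union of the collars with $\ell\tonde{\sigma}<R_0+1$ and lift it to a path $\tilde\sigma$ starting at $\tilde x$; since $\wtde{M}\to M$ is a Riemannian local isometry, $\ell\tonde{\tilde\sigma}=\ell\tonde{\sigma}$, and the endpoint of $\tilde\sigma$ lies in the preimage of a collar, that is, in a wall (a wall being a connected component of such a preimage). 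Thus every point of $\wtde{M}$ is within $R_0+1$ of the union of the walls, and the observation above shows that the asymptotic walls cover $\CG\wtde{M}$.

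I do not foresee a genuine obstacle. The two points that need a little care are that the coarse density of the walls in $\wtde{M}$ must be \emph{uniform}, which is exactly what compactness of $M$ delivers, and that lifting a path preserves its length, which holds because the covering is a Riemannian local isometry. One could alternatively transport the wall statement from the coset statement across the Milnor-\v{S}varc quasi-isometry between $\pi_1\tonde{M}$ and $\wtde{M}$, via the finite-Hausdorff-distance correspondence between walls and cosets recorded just before this lemma; but the direct compactness argument is shorter and avoids checking that those Hausdorff distances are uniform.
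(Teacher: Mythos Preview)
Your argument is correct. For the first assertion (walls) you follow exactly the paper's route: the remark after Definition~\ref{asFamily} together with the compactness of $M$. The paper states this in one line; you supply the details, including the verification of the remark and the path-lifting step, which are fine.

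For the second assertion (cosets) you take a different, and in fact simpler, route than the paper. The paper deduces the coset statement from the wall statement by transporting across the Milnor--\v{S}varc quasi-isometry and invoking Lemma~\ref{QItoBL}, using that this quasi-isometry takes walls to within finite Hausdorff distance of cosets. You instead observe directly that the cosets of any single $H\in\mathcal{H}$ already partition $\pi_1\tonde{M}$, so coarse density holds with $R=0$ and the remark after Definition~\ref{asFamily} applies immediately. Your approach avoids the need to check that the wall--coset Hausdorff distances are uniform (which they are, but it is an extra step), at the cost of not tying the two families together; the paper's approach, conversely, makes explicit that the two asymptotic families correspond under the bi-Lipschitz map $\CG f$, which is what is actually used downstream.
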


\begin{proof}
The first assertion follows from the remark after Definition \ref{asFamily} and the compactness of $M$. The second follows from the quasi-isometry given by Milnor-\v{S}varc Lemma, the fact that this quasi-isometry takes walls at a finite Hausdorff distance from cosets, and Lemma \ref{QItoBL}.
\end{proof}

We can then state the following important result, which will describe the structure of an asymptotic cone of $\pi_1 \tonde{M}$ with respect to the family of asymptotic cosets, and hence of an asymptotic cone of $\wtde{M}$ with respect to the family of asymptotic walls.

\begin{teo}
The fundamental group of a cusp-decomposable manifold $M$ is hyperbolic relatively to its cusp subgroups. As a consequence, any asymptotic cone of its universal cover $\wtde{M}$ is tree-graded with respect to the family of its asymptotic walls, which are bi-Lipschitz homeomorphic to a suitable nilpotent Lie group with a Carnot-Caratheodory metric.
\end{teo}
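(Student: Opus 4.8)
The plan is to reduce the statement to a known characterization of relative hyperbolicity and then transport the conclusion through Milnor--Švarc. By Lemma \ref{FundGroupGraph}, $\pi_1(M)$ is the fundamental group of a finite graph of groups whose vertex groups are the $\pi_1(P)$ and whose edge groups are cusp subgroups, i.e. fundamental groups of infranil boundary components. Each vertex group $\pi_1(P)$ is itself hyperbolic relative to its cusp subgroups: indeed $\wtde P$ is a neutered space inside a negatively curved symmetric space $\wtde X$, obtained by deleting an equivariant family of disjoint horoballs, and the pair $(\pi_1(P), \{\text{cusp subgroups}\})$ is the prototypical example of a relatively hyperbolic group (the horoballs give the peripheral structure; this is classical, going back to Farb and Bowditch). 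The edge groups are precisely the peripheral subgroups being amalgamated. I would then invoke the standard combination theorem for relatively hyperbolic groups (Dahmani, or the Bowditch/Osin formulations): a finite graph of groups in which each vertex group is hyperbolic relative to a collection of subgroups that includes all the incident edge groups, has fundamental group hyperbolic relative to the collection of (conjugates of) those vertex-group peripherals that are \emph{not} edge groups, together with the edge groups themselves. Since in our situation \emph{every} cusp subgroup of a piece is either glued (hence an edge group) or left as an honest boundary component of $M$, the resulting peripheral family is exactly the set of cusp subgroups of $M$ as defined above. This gives the first sentence.

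For the second sentence, apply the definition of relative hyperbolicity recalled in the excerpt: $\pi_1(M)$ relatively hyperbolic with respect to $\mathcal H$ means, by \cite[Theorem 8.5]{DrutuSapir}, that the Cayley graph of $\pi_1(M)$ is asymptotically tree-graded with respect to the family of left cosets of the subgroups in $\mathcal H$; that is, every asymptotic cone of $\pi_1(M)$ is tree-graded with respect to the asymptotic family $\CG\mathcal H$ of Definition \ref{asFamily}. By the Milnor--Švarc Lemma the orbit map $\pi_1(M)\to\wtde M$ is a quasi-isometry, and by Lemma \ref{QItoBL} it induces a bi-Lipschitz homeomorphism between the corresponding asymptotic cones; moreover, as observed just before the statement, this quasi-isometry carries left cosets of subgroups in $\mathcal H$ to within finite Hausdorff distance of walls and conversely, so under the induced bi-Lipschitz map the asymptotic cosets correspond exactly to the asymptotic walls. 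Being tree-graded is preserved by bi-Lipschitz homeomorphisms that respect the distinguished families, so any asymptotic cone of $\wtde M$ is tree-graded with respect to its asymptotic walls.

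Finally, the identification of asymptotic walls with nilpotent Lie groups: by Lemma \ref{biLipWalls} every wall of $\wtde M$ is $K_P$-bi-Lipschitz homeomorphic to $\G(M)\times[0,3]$ (with the Riemannian metric), hence quasi-isometric to $\G(M)$. An asymptotic wall is, by construction, an asymptotic cone of a sequence of walls; since there are only finitely many isometry classes of walls (Lemma \ref{finIsoClasses}) and all are uniformly bi-Lipschitz to $\G(M)\times[0,3]$, an asymptotic wall is bi-Lipschitz homeomorphic to an asymptotic cone of $\G(M)$ (using Lemma \ref{QItoBL} again, and that the interval factor collapses), which by Lemma \ref{asConesNilp} is $\G(M)$ itself with a left-invariant Carnot--Carathéodory metric.

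The main obstacle is the first step: one must either cite a sufficiently general combination theorem for relative hyperbolicity of graphs of groups, or give a hands-on argument. A clean hands-on route is to verify directly that $\wtde M$ is asymptotically tree-graded with respect to its walls — checking the Drutu--Sapir criteria (\cite[Theorem 3.19 / Theorem 4.1]{DrutuSapir}): walls are uniformly quasiconvex and have the bounded coarse intersection property (two distinct walls coarsely intersect in a bounded set, which follows from $D_W>0$ of Lemma \ref{DWDTW} together with the negative curvature inside each chamber making distinct horospheres of a chamber coarsely intersect boundedly), and geodesics in $\wtde M$ penetrating a wall do so in a controlled way because each chamber, being a neutered space, is hyperbolic relative to its thin walls. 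Verifying these asymptotic-tree-gradedness axioms uniformly across the finitely many chamber/wall types, and in particular controlling how a geodesic enters and exits a wall, is where the real work lies; the rest is formal.
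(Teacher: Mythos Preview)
Your approach is essentially the paper's: invoke a combination theorem for relatively hyperbolic groups on the graph-of-groups structure of Lemma~\ref{FundGroupGraph}, then transfer to $\wtde M$ via Milnor--\v{S}varc and the Drutu--Sapir characterization. The paper does exactly this, citing Dahmani's combination theorem and \cite[Theorem 8.5]{DrutuSapir}.

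The gap you correctly flag as ``the main obstacle'' is also where your statement of the combination theorem is too loose. It is not enough that each edge group merely appear among the peripherals of the adjacent vertex groups; Dahmani's theorem needs more, and the paper verifies precisely those extra hypotheses rather than taking your alternative hands-on route. Specifically, the paper checks: (i) the action of $\pi_1(M)$ on the Bass--Serre tree is $2$-acylindrical, because an element fixing an edge is a parabolic isometry of an adjacent chamber, and a parabolic in a negatively curved symmetric space fixes exactly one horosphere, hence cannot fix a second edge; (ii) cusp subgroups are \emph{fully quasi-convex} in the sense of Dahmani (their limit set is a single point, trivially a convergence action, not stabilized by any nontrivial coset). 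These two points are short but they are the content that makes the citation of Dahmani legitimate; your formulation (``includes all the incident edge groups'') would, taken literally, allow non-acylindrical amalgams for which the conclusion fails. Once you add (i) and (ii), your argument and the paper's coincide; your proposed direct verification of the Drutu--Sapir axioms is unnecessary.
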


\begin{proof}
The first thesis is a consequence of the Dahmani combination Theorem \cite[Theorem 0.1 (1)]{DahmaniCombConvGrps}. We have to verify that Dahmani's Theorem applies in our case of interest, i.e. that the following conditions hold:
\begin{itemize}
\item The action of $\pi_1 \tonde{M}$ on the Bass-Serre tree of $\wtde{M}$ is 2-acylindrical. We briefly recall here the argument contained in the proof of \cite[Lemma 4]{NPcusp}. Suppose that there is an element of $\pi_1 \tonde{M}$ that fixes an edge of the Bass-Serre tree. It must come from a parabolic isometry of a chamber corresponding to an endpoint of the edge. Parabolic isometries in neutered spaces fix exactly one horosphere, so if an element of $\pi_1 \tonde{M}$ fixes two different edges, it must necessarily be the identity.
\item Vertex groups are relatively hyperbolic with respect to the cusp subgroups by \cite[Theorem 5.1]{FarbRHG}. They act as convergence groups on the boundary of the $\F$-hyperbolic space to which pieces of $M$ are locally isometric.
\item Cusp subgroups are also fully quasi-convex in the sense of Dahmani \cite[Definition 1.6]{DahmaniCombConvGrps} in the fundamental groups of the pieces in which they embed. In fact, the limit set of one such cusp subgroup $H$ is just a point, on which $H$ trivially acts as a convergence group, and which is not stabilized by any non-trivial coset of $H$.
\end{itemize}
The first assertion now follows. Now, by an equivalent characterization of relative hyperbolicity given in \cite[Theorem 8.5]{DrutuSapir}, any asymptotic cone of $\pi_1 \tonde{M}$ is tree-graded with respect to the family of asymptotic cosets. The second claim follows by the bi-Lipschitz equivalence of asymptotic cones of $\pi_1 \tonde{M}$ and $\wtde{M}$ discussed earlier.
\end{proof}

We now explore the consequences of the above theorem.

\begin{cor} \label{wallsDiverge}
Different walls in $\wtde{M}$ lie at infinite Hausdorff distance from each other. More precisely, for every positive real number $R'$ there is a positive real number $D'$ depending only on $\wtde{M}$ such that the diameter of the intersection of the $R'$-neighbourhoods of two different walls is less or equal than $D'$.
\end{cor}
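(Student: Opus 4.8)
The plan is to transport the problem to $\pi_1\tonde{M}$ through the Milnor--\v{S}varc quasi-isometry $q\colon\pi_1\tonde{M}\to\wtde{M}$ and there appeal to the relative hyperbolicity established in the theorem above. First I would pin down the dictionary between walls and cosets already alluded to before that theorem: every wall of $\wtde{M}$ lies within Hausdorff distance $C_0$ of the image under $q$ of a left coset of some $H\in\mathcal{H}$, with $C_0$ independent of the wall, because $q$ is $\pi_1\tonde{M}$-equivariant up to bounded error and there are finitely many $\pi_1\tonde{M}$-orbits of walls, one per collar of $M$. Moreover \emph{distinct} walls correspond to \emph{distinct} cosets: the stabiliser of a wall is a conjugate of some $H\in\mathcal{H}$, these peripheral subgroups are almost malnormal in $\pi_1\tonde{M}$, the family of walls is locally finite (Corollary \ref{locFinWalls}), and these facts together force distinct walls to have non-commensurable — in particular distinct — stabilisers, hence to correspond to distinct cosets.

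The second ingredient is the following standard feature of a relatively hyperbolic group: for every $R$ there is $B=B\tonde{R}$ such that $\operatorname{diam}\tonde{N_R\tonde{gH}\cap N_R\tonde{g'H'}}\le B$ whenever $gH\ne g'H'$ are left cosets of peripheral subgroups. (This is one of the finitary conditions equivalent, for the Cayley graph, to asymptotic tree-gradedness with respect to the peripheral cosets; see \cite{DrutuSapir}. Geometrically it is the uniform divergence of distinct peripheral cosets. It also follows from the theorem above: were it false, an asymptotic cone of $\pi_1\tonde{M}$ would contain two distinct pieces sharing two distinct points, contradicting Definition \ref{topTreeGraded}; the delicate point in that argument is precisely that sequences of \emph{distinct} cosets yield \emph{distinct} asymptotic cosets, which is where the bounded coset penetration property enters.) Given this, let $W\ne W'$ be walls, with associated distinct cosets $gH\ne g'H'$ and $C_0$ as above. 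Then $N_{R'}\tonde{W}\cap N_{R'}\tonde{W'}\subseteq N_{R'+C_0}\tonde{q\tonde{gH}}\cap N_{R'+C_0}\tonde{q\tonde{g'H'}}$, and chasing this through the quasi-isometry $q$ shows it is contained in a subset of $\wtde{M}$ of diameter bounded by a constant $D'$ depending only on $R'$ and on the quasi-isometry and structural constants of $\wtde{M}$. This $D'$ is the one required, and the first assertion of the corollary is then automatic, since a finite Hausdorff distance between $W$ and $W'$ would make $N_{R'}\tonde{W}\cap N_{R'}\tonde{W'}$ contain the unbounded set $W$ for $R'$ large.

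The routine parts are the quasi-isometry bookkeeping of the last step and the verification of the dictionary; the genuinely substantial point, however one organises the proof, is that distinct walls — equivalently distinct peripheral cosets — determine distinct asymptotic walls, for without this two a priori different pieces of the cone could coincide and the tree-graded structure would yield no contradiction. I would expect this to be the main obstacle, and I would overcome it exactly as indicated: through the almost-malnormality of the cusp subgroups and the bounded coset penetration property of $\pi_1\tonde{M}$, both available here by the relative hyperbolicity just proved.
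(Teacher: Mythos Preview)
Your argument is correct, and at bottom it uses the same ingredient as the paper: the finitary condition usually called $(\alpha_1)$ in \cite[Theorem~4.1]{DrutuSapir}, which is one of the equivalent characterisations of asymptotic tree-gradedness. The paper's proof is a single line invoking exactly that theorem, applied directly to $\wtde{M}$ with its family of walls --- this is legitimate because the preceding theorem already concludes that $\wtde{M}$ itself is asymptotically tree-graded with respect to the walls, not merely that $\pi_1\tonde{M}$ is asymptotically tree-graded with respect to cosets.

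The difference is that you detour through $\pi_1\tonde{M}$: you transport walls to cosets via the Milnor--\v{S}varc map, apply $(\alpha_1)$ there, and transport back. This forces you to check that \emph{distinct} walls correspond to \emph{distinct} cosets, which you rightly flag as the delicate step and handle via almost-malnormality of cusp subgroups. That work is sound, but it is avoided entirely by the paper's route: once asymptotic tree-gradedness is established for the family of walls themselves, Theorem~4.1 of \cite{DrutuSapir} yields $(\alpha_1)$ for that family directly, and the statement of $(\alpha_1)$ is literally the corollary. Your concern that ``distinct walls might not give distinct asymptotic walls'' is precisely what the implication ``asymptotically tree-graded $\Rightarrow (\alpha_1)$'' in \cite{DrutuSapir} takes care of; you do not need to re-prove it by hand. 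So: same idea, but you can shorten your argument considerably by citing \cite[Theorem~4.1]{DrutuSapir} for $\wtde{M}$ and walls rather than for $\pi_1\tonde{M}$ and cosets.
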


\begin{proof}
The proof follows from an equivalent characterization of asymptotic tree-gradedness given in \cite[Theorem 4.1]{DrutuSapir}.
\end{proof}

From the compactness of pieces it follows that any chamber is at a finite Hausdorff distance from the union of the walls adjacent to it. This, together with the preceeding corollary, immediately implies the following

\begin{cor}
Two distinct chambers are at infinite Hausdorff distance from each other.
\end{cor}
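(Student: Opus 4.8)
The plan is to argue by contradiction. Suppose $C_1 \neq C_2$ but their Hausdorff distance is finite; I will then produce points of $C_1$ arbitrarily far from $C_2$, contradicting this.

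Since $C_1 \neq C_2$, the geodesic in the Bass-Serre tree of $M$ (Definition \ref{BassSerreTree}) joining them has at least one edge; let $W$ be its first edge, a wall adjacent to $C_1$. By the tree-of-spaces structure of $\wtde{M}$, removing from $\wtde{M}$ the open collar contained in $W$ disconnects it into two pieces, one containing $C_1$ and one containing $C_2$. Hence every path from a point $p \in C_1$ to a point of $C_2$ must pass through $W$, so that $d\tonde{p,C_2} \geq d\tonde{p,W}$ for every $p \in C_1$.

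The crucial claim is that $\sup_{p \in C_1} d\tonde{p,W}$ is infinite, and this is where Corollary \ref{wallsDiverge} is used. Assume otherwise, so that $C_1$ is contained in the $R_0$-neighbourhood of $W$ for some $R_0$. By the fact, recalled just before the statement, that a chamber is at finite Hausdorff distance from the union of the walls adjacent to it, $C_1$ lies within Hausdorff distance $H_1$ of that union; therefore every wall $W'$ adjacent to $C_1$ is contained in the $\tonde{R_0 + H_1}$-neighbourhood of $W$, and trivially also in the $\tonde{R_0+H_1}$-neighbourhood of itself. Now choose such a $W'$ with $W' \neq W$: this is possible because a chamber has infinitely many adjacent walls, the cusp subgroups having infinite index in the fundamental group of a piece, so that the corresponding invariant family of walls is infinite. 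Corollary \ref{wallsDiverge}, applied to the two distinct walls $W$ and $W'$, then forces $W'$ to be contained in a set of bounded diameter, hence to be bounded; but by Lemma \ref{biLipWalls} the wall $W'$ is bi-Lipschitz homeomorphic to $\G\tonde{M} \times \R$, which is unbounded. This contradiction proves the claim.

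To conclude, given any $R > 0$ pick $p \in C_1$ with $d\tonde{p,W} > R$; then $d\tonde{p,C_2} \geq d\tonde{p,W} > R$. So $C_1$ is contained in no finite neighbourhood of $C_2$, contradicting the hypothesis, and the corollary is proved. The step I expect to require the most care is the first one, namely turning ``$W$ is a separating edge of the Bass-Serre tree'' into the honest metric separation statement in $\wtde{M}$ that underlies the inequality $d\tonde{p,C_2} \geq d\tonde{p,W}$; once that is in place, the rest is a short combination of the stated hypothesis with Corollary \ref{wallsDiverge} and Lemma \ref{biLipWalls}.
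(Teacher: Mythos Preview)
Your argument is correct and is a legitimate way to fill in the paper's very terse justification, which simply records that the statement follows from Corollary~\ref{wallsDiverge} together with the observation that each chamber lies at finite Hausdorff distance from the union of its adjacent walls. The one ingredient you add beyond what the paper explicitly cites is the separation step via the Bass--Serre tree (reducing to $\sup_{p\in C_1} d(p,W)=\infty$ for a single separating wall $W$); this is a natural and clean way to turn the two cited facts into an actual proof, and the remaining steps---that a second wall $W'\neq W$ adjacent to $C_1$ exists and would be forced to be bounded---are exactly the intended use of those facts.
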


% The tree-gradedness of asymptotic cones of $\wtde{M}$ also implies the following
% 
% \begin{cor} \label{asWallsIsoEmb}
% Asymptotic walls are isometrically embedded in any asymptotic cone of $\wtde{M}$.
% \end{cor}
% 
% \begin{proof}
% By \cite[Lemma 2.4]{DrutuSapir}, asymptotic walls are convex in an asymptotic cone of $\wtde{M}$, hence convex, and thus isometrically embedded.
% \end{proof}

The relative hyperbolicity implies an important result on the large scale behaviour of walls in $\wtde{M}$.

\begin{lem} \label{wallsBiLipEmb}
There is a real constant $Q \geq 1$, depending only on $\wtde{M}$, such that every wall is $Q$-bi-Lipschitz embedded in $\wtde{M}$.
\end{lem}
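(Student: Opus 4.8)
The plan is to derive the uniform bi-Lipschitz embedding constant from relative hyperbolicity, using the fact that the walls are (up to bounded Hausdorff distance) precisely the peripheral subsets with respect to which $\wtde{M}$ is asymptotically tree-graded. The key input is that in an asymptotically tree-graded space, each peripheral subset is quasi-convex and, more to the point, undistorted: the intrinsic metric on a peripheral subset is quasi-isometric to the restriction of the ambient metric, with constants depending only on the asymptotically-tree-graded structure. This is one of the standard consequences of the Drutu--Sapir characterizations (the same circle of ideas cited in Corollary \ref{wallsDiverge} via \cite[Theorem 4.1]{DrutuSapir}).

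First I would recall that the restriction of the ambient distance of $\wtde{M}$ to a wall is bounded above by the intrinsic path distance on the wall (this was noted at the end of Section \ref{ch-W-TW}), so only the lower bound requires work; that is, one must show $d_{\wtde M}(x,y) \geq Q^{-1} d_W(x,y)$ for $x,y$ in a wall $W$, with $Q$ independent of $W$. By Lemma \ref{finIsoClasses} there are only finitely many isometry types of walls, so it suffices to produce, for each wall, constants bounded independently of the wall; in fact the finiteness reduces matters to a compactness/uniformity statement about one wall of each type, but the argument is cleanest done uniformly via the cone. Second, I would pass to asymptotic cones: suppose no uniform $Q$ exists, so there are walls $W_i$ and points $x_i,y_i \in W_i$ with $d_{W_i}(x_i,y_i) > i \cdot d_{\wtde M}(x_i,y_i)$. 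Rescale by $\lambda_i = d_{\wtde M}(x_i,y_i)$, base at $x_i$, and take an ultralimit. In the asymptotic cone $\CG\wtde M$ the sequences $(x_i)$ and $(y_i)$ converge to distinct points $x_\infty \neq y_\infty$ (ambient distance $1$) lying in a common asymptotic wall $W_\infty$, while the intrinsic distances blow up. But by Lemma \ref{biLipWalls} each $W_i$ is uniformly bi-Lipschitz to a product of $\G(M)$ with an interval, and by Lemma \ref{asConesNilp} the asymptotic cones of $\G(M)$ are the group with a Carnot--Carathéodory metric, which is a geodesic (indeed length) space; hence the rescaled intrinsic metrics on the walls converge to a genuine finite metric on $W_\infty$, namely its own asymptotic cone, contradicting the blow-up. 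More directly: since $W_\infty$, as an asymptotic wall, is a closed convex subset of the tree-graded cone and also carries the limit of the intrinsic metrics, and the inclusion of the latter into the former is $1$-Lipschitz and surjective onto a point-set that is genuinely separated, the two metrics must be bi-Lipschitz comparable on $W_\infty$, giving the contradiction.

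The honest way to package this without circularity is to invoke the Drutu--Sapir fact directly: in a space asymptotically tree-graded with respect to $\mathcal{A}$, every $A \in \mathcal{A}$ is $(L,L)$-quasi-isometrically embedded for a constant $L$ depending only on the structure, provided the intrinsic metrics on the $A$'s are themselves reasonable (here length metrics, guaranteed by Lemma \ref{biLipWalls}). Concretely one uses that a path in $\wtde M$ between $x,y \in W$ that strays far from $W$ must, by Corollary \ref{wallsDiverge} and the tree-graded structure of the cone, have length comparable to its ``shadow'' on $W$, and iterating this over a near-geodesic gives the linear lower bound with uniform constant; finiteness of wall types (Lemma \ref{finIsoClasses}) then lets one take $Q$ to be the maximum over the finitely many types, absorbing also the $K_P$ of Lemma \ref{biLipWalls}.

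The main obstacle I anticipate is making the asymptotic-cone argument genuinely yield a \emph{uniform} constant rather than merely finiteness for each fixed wall: one must be careful that the ultralimit identifies the limit of the \emph{intrinsic} wall metrics with the intrinsic metric of the \emph{asymptotic} wall (not just with the restricted ambient metric), and that the asymptotic wall is the \emph{same} subset of the cone whichever metric one uses to take the limit. This is exactly where Lemma \ref{biLipWalls} and Lemma \ref{asConesNilp} do the work — they guarantee the intrinsic metrics do not degenerate in the limit — while the tree-graded structure guarantees the limit subset is convex and hence that a $1$-Lipschitz bijection onto it from a geodesic space of the same "type" is forced to be bi-Lipschitz. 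Once that identification is secure, the contradiction is immediate and $Q$ depends only on $\wtde M$ as claimed.
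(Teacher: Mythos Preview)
Your proposal correctly identifies relative hyperbolicity (via Dru\c{t}u--Sapir) as the source of undistortion for the peripherals, and this is indeed the paper's starting point. But there are two genuine gaps.

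First, the asymptotic-cone contradiction does not close. You rescale by $\lambda_i = d_{\wtde M}(x_i,y_i)$, but nothing forces this sequence to be unbounded: the hypothesised failure of bi-Lipschitz embedding could occur with $d_{\wtde M}(x_i,y_i)$ bounded (even constant) while $d_{W_i}(x_i,y_i)\to\infty$, and then the cone construction does not apply. Even when $\lambda_i\to\infty$, your identification is circular: the point $y_\infty$ lies in the asymptotic wall (for the ambient metric) but is at infinite rescaled \emph{intrinsic} distance from the basepoint, so it is simply absent from the ultralimit of the intrinsic wall metrics. Knowing that the intrinsic cone of each $W_i$ is $\tonde{\G(M),d_{CC}}$ tells you nothing about $y_\infty$. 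To conclude you would need the asymptotic wall in $\CG\wtde M$ to coincide with the image of the intrinsic cone of the walls, which is precisely the undistortion you are trying to prove. Your last paragraph flags this obstacle but does not resolve it.

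Second, even granting the Dru\c{t}u--Sapir input directly (your ``honest way''), what it yields is a uniform $\tonde{L',c'}$-quasi-isometric embedding of each wall, with an additive constant $c'>0$. The lemma asks for a \emph{bi-Lipschitz} embedding, with no additive constant. The paper bridges this by a three-case split on $d_{\wtde M}\tonde{p,q}$: when $d_{\wtde M}\tonde{p,q}\geq L'c'$ the additive constant is absorbed into the multiplicative one; when $d_{\wtde M}\tonde{p,q}\leq D_W$ (the constant of Lemma \ref{DWDTW}) a local argument shows that any $\wtde M$-geodesic from $p$ to $q$ cannot leave $W\cup\tonde{\text{adjacent chambers}}$ and then, by convexity of horospheres (thin walls) in those chambers, must in fact stay in $W$, giving $d_W=d_{\wtde M}$ exactly; and the compact intermediate range $D_W<d_{\wtde M}\tonde{p,q}<L'c'$ is handled by the cocompact $\pi_1\tonde{M}$-action and a continuity/compactness bound on $d_W/d_{\wtde M}$. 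Your sketch omits this upgrade; Lemma \ref{finIsoClasses} alone cannot supply it, since even for a single fixed wall a $\tonde{L',c'}$-embedding is not bi-Lipschitz without a separate small-scale argument.
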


\begin{proof}
Let $W$ be a wall, and let $p$ and $q$ be two points in $W$. Denote by $d_{\wtde{M}}$ the distance function induced by the Riemannian metric on $\wtde{M}$ and by $d_W$ the distance function on $W$ induced by the restriction of this metric to $W$. It is obvious that $d_{\wtde{M}} \tonde{p,q} \leq d_W \tonde{p,q}$.

The relative hyperbolicity of $\pi_1 \tonde{M}$ implies by \cite[Lemma 4.15]{DrutuSapir} that cusp subgroups are uniformly quasi-convex in $\pi_1 \tonde{M}$, so by \cite[Lemma III.$\Gamma$.3.5]{BrHa} there exist non-negative real numbers $L \geq 1$ and $c$ such that every cusp subgroup is $\tonde{L,c}$-quasi-isometrically embedded in $\pi_1 \tonde{M}$. The discussion at the beginning of this subsection implies that there exist non-negative real numbers $L' \geq 1$ and $c'$ such that every wall is $\tonde{L',c'}$-quasi-isometrically embedded in $\wtde{M}$. It follows that, whenever $d_M \tonde{p,q} \geq L'c'$, the inequality $d_W \tonde{p,q} \leq \tonde{L'+1} d_M \tonde{p,q}$ holds.

Suppose now that $d_{\wtde{M}}\tonde{p,q} \leq D_W$, where $D_W$ is the constant provided by Lemma \ref{DWDTW}. Then any minimizing geodesic $\gamma$ between $p$ and $q$ in $\wtde{M}$ cannot exit the union of $W$ and of the chambers adjacent to it. A chamber $C$ with the distance function induced by the restriction of the Riemannian metric is a neutered space, and the thin walls bounding $C$ are its horospheres, and are thus convex in $C$. It follows that $\gamma$ must actually stay inside $W$, and then $d_W \tonde{p,q} = d_{\wtde{M}} \tonde{p,q}$.

% We now want to prove that if $d_W \tonde{p,q}$ is greater or equal than a certain positive real number $S$, then there exists a constant $Q'$, depending only on $\wtde{M}$, such that $d_W \tonde{p,q} \leq Q'd_{\wtde{M}} \tonde{p,q}$. If this was false, then for every positive integer $m$ we could find points $p_m$ and $q_m$ in the same wall $W_m$ of $\wtde{M}$ with induced distance function $d_{W_m}$, such that $d_{W_m} \tonde{p_m,q_m} = d_m \geq m$ and $d_m \geq md_{\wtde{M}} \tonde{p_m,q_m}$. Consider a non-principal ultrafilter $\UF$ on $\N$. In $\wtde{M}$ there is only a finite number of isometry classes of walls by Lemma \ref{finIsoClasses}, so, up to changing the sequence of walls $W_m$ with an $\UF$-a.e. equal one, we can suppose all $W_m$ are isometric to the same wall $W'$. Denote by $\iota_m \colon W' \to \wtde{M}$ the $1$-Lipschitz embedding of $W_m$, and by $p'_m$, respectively $q'_m$, the preimage of $p_m$, respectively $q_m$ via $\iota_m$. Consider the asymptotic cones
% \[ \CG \tonde{W',\tonde{p'_m},\tonde{d_m},\UF} \mbox{ and } \CG \tonde{\wtde{M},\tonde{p_m},\tonde{d_m},\UF} \]
% of $W'$ and $\wtde{M}$ respectively and the $1$-Lipschitz map $\iota$ between them constructed from the sequence $\tonde{\iota_m}$ in the same way as in Lemma \ref{QItoBL}. Its image $\iota \tonde{\CG W'}$ is, by definition, an asymptotic wall in $\CG \wtde{M}$. However, sequences $\tonde{p'_m}$ and $\tonde{q'_m}$ give rise to points at distance $1$ in $\CG W'$ that have the same image in $\CG \wtde{M}$, which is absurd by Corollary \ref{asWallsIsoEmb}.

The only remaining case is when $D_W < d_{\wtde{M}} \tonde{p,q} < L'c'$. Let $D_M$ be the diameter of $M$. Up to the action by isometries of $\pi_1 \tonde{M}$ on $\wtde{M}$ we can suppose that both $p$ and $q$ stay in the same fixed closed ball $\mathcal{B}$ of $\wtde{M}$ of radius $D_M + L'c'$ with respect to $d_{\wtde{M}}$. This ball is compact by the completeness of $\wtde{M}$. Let $\Delta_{D_W} \subseteq \mathcal{B} \times \mathcal{B}$ be the set of pairs of points whose distance in $\wtde{M}$ is strictly less than $D_W$. Then the set $\mathcal{K} = \tonde{W \times W} \cap \tonde{\mathcal{B} \times \mathcal{B} \setminus \Delta_{D_W}}$ is also compact. Let $Q'$ be the maximum on $\mathcal{K}$ of the function $\frac{d_W}{d_{\wtde{M}}}$, which is finite because the denominator is bounded from below by $D_W$ and $\mathcal{K}$ is compact. Set finally $Q = \max \graffe{L'+1,Q'}$.
\end{proof}

\begin{lem} \label{chambersBiLipEmb}
Every chamber is $K_EQ$-bi-Lipschitz embedded in $\wtde{M}$, where $Q$ is the constant provided by the previous lemma and $K_E$ is the constant provided by Lemma \ref{biLipWalls}.
\end{lem}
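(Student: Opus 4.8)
The easy inequality $d_{\wtde{M}}\tonde{p,q} \leq d_C\tonde{p,q}$ holds for all $p,q$ in a chamber $C$, as recalled above, so the plan is to establish the matching bound $d_C\tonde{p,q} \leq K_E Q\, d_{\wtde{M}}\tonde{p,q}$. Fix $p,q \in C$ and a minimizing geodesic $\gamma \colon [0,L] \to \wtde{M}$ from $p$ to $q$; every subarc of $\gamma$ then realizes the $d_{\wtde{M}}$-distance of its endpoints. Since $C$ is closed, $\graffe{u : \gamma\tonde{u} \notin C}$ is a disjoint union of open intervals $\tonde{s_1,t_1},\dots,\tonde{s_m,t_m}$, finitely many by the local finiteness of walls (Corollary \ref{locFinWalls}); outside these \emph{excursion} intervals $\gamma$ stays in $C$, and each $\gamma\tonde{s_j},\gamma\tonde{t_j}$ lies on the frontier of $C$, hence on some thin wall of $C$. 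Write $x_j = \gamma\tonde{s_j}$, $y_j = \gamma\tonde{t_j}$.

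The crucial observation is that each excursion $\restr{\gamma}{[s_j,t_j]}$ leaves and re-enters $C$ through thin walls lying in a single wall. Just after $s_j$, $\gamma$ crosses a thin wall $T_j$ of $C$ into the adjacent wall $W_j$; since the Bass-Serre tree is a tree (Definition \ref{BassSerreTree}), the walk that $\restr{\gamma}{\tonde{s_j,t_j}}$ induces on it leaves the vertex of $C$ and can return to it only by backtracking along the edge $W_j$, so $y_j$ lies on a thin wall $T_j' \subseteq W_j$ of $C$. When $W_j$ meets $C$ in a single thin wall this forces $T_j' = T_j$; in the degenerate case $T_j' \neq T_j$ one argues as below, invoking in addition the convexity of $T_j'$ in $C$ and the uniform bound on $d_C\tonde{T_j,T_j'}$ afforded by the finiteness of isometry types (Lemma \ref{finIsoClasses}). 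Assume then $x_j,y_j \in T_j$. Being a horosphere of the neutered space $C$, the thin wall $T_j$ is convex in $C$, so, using in turn this convexity, Lemma \ref{biLipWalls}, Lemma \ref{wallsBiLipEmb}, and the minimality of $\gamma$,
\[ d_C\tonde{x_j,y_j} = d_{T_j}\tonde{x_j,y_j} \leq K_E\, d_{W_j}\tonde{x_j,y_j} \leq K_E Q\, d_{\wtde{M}}\tonde{x_j,y_j} = K_E Q\,\ell\tonde{\restr{\gamma}{[s_j,t_j]}}. \]

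To conclude, partition $[0,L]$ into the excursion intervals and the complementary intervals on which $\gamma$ lies in $C$. On each interval $I$ of the second kind, $\restr{\gamma}{I}$ is a path inside $C$, so $d_C$ between its endpoints is at most $\ell\tonde{\restr{\gamma}{I}}$, which equals the corresponding $d_{\wtde{M}}$-distance. Combining these with the excursion estimates of the previous paragraph through the triangle inequality in $C$, and using $K_E Q \geq 1$,
\[ d_C\tonde{p,q} \leq K_E Q \sum_I \ell\tonde{\restr{\gamma}{I}} = K_E Q\, \ell\tonde{\gamma} = K_E Q\, d_{\wtde{M}}\tonde{p,q}, \]
where $I$ runs over all intervals of the partition. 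Together with the easy inequality this shows that the inclusion of $C$ into $\wtde{M}$ is a $K_E Q$-bi-Lipschitz embedding.

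The step I expect to be the main obstacle is the assertion that an excursion exits and re-enters $C$ through thin walls contained in the same wall: this is exactly where the tree structure of the graph-of-groups decomposition and the precise local picture of a chamber with its adjacent walls and thin walls enter, and where the degenerate situation of a wall meeting a chamber in two thin walls has to be disposed of. Once that is in place, the remaining ingredients are the already established convexity of thin walls in their chambers, the bi-Lipschitz embeddings of thin walls in walls and of walls in $\wtde{M}$, and routine bookkeeping along $\gamma$.
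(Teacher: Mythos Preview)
Your argument is correct and follows essentially the same route as the paper's proof: partition a minimizing $\wtde{M}$-geodesic between $p$ and $q$ according to whether it lies in $C$ or not, use the tree structure to see that each excursion begins and ends on thin walls contained in a single adjacent wall $W_j$, and then combine Lemma~\ref{biLipWalls} and Lemma~\ref{wallsBiLipEmb} to bound $d_C(x_j,y_j)$ by $K_EQ$ times the excursion length. The paper phrases this as a two-stage replacement (first push $\gamma$ into $C$ together with its adjacent walls using the $Q$-bound, then into $C$ using the $K_E$-bound), while you telescope the two estimates into the single chain $d_C=d_{T_j}\leq K_E\,d_{W_j}\leq K_EQ\,d_{\wtde M}$; these are the same computation.

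Two small remarks. First, the degenerate case $T_j'\neq T_j$ you worry about does not in fact occur: by Bass--Serre theory the graph of Definition~\ref{BassSerreTree} is genuinely a tree, so every non-boundary wall is adjacent to two \emph{distinct} chambers and meets $C$ in a single thin wall; your own backtracking argument already forces $y_j\in T_j$. Second, your appeal to Corollary~\ref{locFinWalls} only shows that $\gamma$ meets finitely many walls, not that it has finitely many excursions into them; a geodesic could in principle graze the same thin wall repeatedly. This is harmless, since the estimate $\sum_j\ell(\gamma|_{[s_j,t_j]})\leq L$ is insensitive to the number of excursions and one may, if desired, perturb $\gamma$ to a nearby path transverse to the thin walls before running the argument.
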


\begin{proof}
Take points $p$, $q$ in the same chamber $C$, and a minimizing geodesic $\gamma$ in $\wtde{M}$ between them. Thanks to the previous lemma there exists a piecewise smooth curve $\gamma'$ between $p$ and $q$ at most $Q$ times longer and that lies entirely in the union of $C$ and of the walls adjacent to it. Now, if $\gamma''$ is a connected component of $\gamma' \cap \tonde{\wtde{M} \setminus C}$ with endpoints $x$ and $y$, by Lemma \ref{biLipWalls} $\ell \tonde{\gamma''} \leq K_E d \tonde{x,y}$, where $d$ is the path distance in a thin wall in the boundary of $C$. It follow that $C$ is $K_EQ$-bi-Lipschitz embedded in $\wtde{M}$.
\end{proof}

\section{Main results}

In this section $M$ is still a cusp-decomposable $n$-manifold. We will denote by $\G = \G \tonde{M}$ the nilpotent Lie group that covers the boundary components of the pieces in $M$. In this section we will denote by $d_R$, respectively $d_{CC}$, the Riemannian distance, respectively the Carnot-Caratheodory distance, on $\G$.

\subsection{Walls and chambers are quasi-preserved}

Here we explore the behaviour of walls and chambers of the universal covers of cusp-decomposable manifolds under quasi-isometries. The results on the behaviour of walls are an adaptation of more general facts on relatively hyperbolic groups proven in \cite{DrutuSapir} and \cite{BDM} that takes in account the particular topological and metric structure of asymptotic cones of nilpotent Lie groups.

\begin{lem} \label{AsWallsInAsWalls}
Let $M$ and $\G$ be as above. Consider an asymptotic cone $\CG \wtde{M}$ of $\wtde{M}$ and a bi-Lipschitz embedding $\phi \colon \tonde{\G,d_{CC}} \to \CG \wtde{M}$. Then the image of $f$ is an asymptotic wall in $\CG \wtde{M}$.
\end{lem}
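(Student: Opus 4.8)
The plan is to exploit the tree-graded structure of $\CG \wtde{M}$ with respect to its asymptotic walls. The key facts are: (1) $\CG \wtde{M}$ is tree-graded with respect to the family of asymptotic walls, each of which is bi-Lipschitz homeomorphic to $\tonde{\G, d_{CC}}$; (2) by Lemma \ref{asConesNilp} applied to $\G \tonde{m,\F}$ (or trivially to $\R^m$), the space $\tonde{\G, d_{CC}}$ is itself an asymptotic cone of a nilpotent Lie group, hence is a connected, geodesic, locally compact homogeneous space without global cut-points; (3) nilpotent groups with their Carnot-Caratheodory metric have no cut-points, and in fact no \emph{local} cut-points either — removing a point from an open subset of $\G$ leaves it connected, because the topological dimension is at least $2$ in all cases of interest. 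The strategy is then: $\phi\tonde{\G}$ is the continuous image of a path-connected space without cut-points, so by \cite[Lemma 2.13]{DrutuSapir} (the lemma recalled in the excerpt about embeddings of path-connected spaces without cut-points into tree-graded spaces) it must be contained in a single asymptotic wall $W_{\infty}$. It then remains to upgrade ``contained in'' to ``equal to''.

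First I would record that the asymptotic walls do not have global cut-points: each is bi-Lipschitz to $\tonde{\G, d_{CC}}$, which is homogeneous under left translations and connected after deleting any point (using that the topological dimension $km + k - 1$, resp.\ $m$, is $\geq 2$ since $n \geq 3$), so it has no cut-point. This verifies the hypothesis of the cut-point lemma. Since $\tonde{\G, d_{CC}}$ is geodesic (Lemma \ref{asConesNilp} together with the fact that asymptotic cones are geodesic), path-connected, and has no cut-points, and $\phi$ is a topological embedding, the lemma gives an asymptotic wall $W_{\infty}$ with $\phi\tonde{\G} \subseteq W_{\infty}$.

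Next I would show $\phi\tonde{\G} = W_{\infty}$. The map $\phi$ is a bi-Lipschitz embedding of $\tonde{\G, d_{CC}}$, and $W_{\infty}$ is bi-Lipschitz homeomorphic to $\tonde{\G, d_{CC}}$ via some homeomorphism $\psi$; thus $\psi^{-1} \circ \phi$ is a bi-Lipschitz embedding of $\tonde{\G, d_{CC}}$ into itself whose image is the set $\psi^{-1}\tonde{\phi\tonde{\G}}$. Here the main obstacle is the following dimension/openness point: a bi-Lipschitz embedding of $\tonde{\G, d_{CC}}$ into itself has image that is both closed (it is complete, being bi-Lipschitz to the complete space $\G$) and open. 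Openness is the crux — it follows from invariance of domain once one knows $\tonde{\G, d_{CC}}$ is, locally, bi-Lipschitz (hence homeomorphically) a manifold of the correct topological dimension, so that a bi-Lipschitz self-embedding is a map between manifolds of equal dimension and is thus open by Brouwer. (Alternatively one invokes Hausdorff-dimension and topological-dimension rigidity as in the proof of Lemma \ref{DistNilpGroups}, to rule out the image being lower-dimensional, combined with local compactness to get openness of a dimension-preserving injection.) Since $W_{\infty}$ is connected, a nonempty subset that is both open and closed in $W_{\infty}$ equals $W_{\infty}$; hence $\phi\tonde{\G} = W_{\infty}$, which is an asymptotic wall, as required.

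I expect the openness/invariance-of-domain step to be where the real work lies: one must be careful that ``bi-Lipschitz with respect to $d_{CC}$'' is strong enough to force openness even though $d_{CC}$ is not Riemannian, which is exactly why the local topological structure of $\tonde{\G, d_{CC}}$ (it is homeomorphic to Euclidean space of dimension $km+k-1$, resp.\ $m$) must be invoked; the containment step via the cut-point lemma is comparatively routine given the tree-graded description already established.
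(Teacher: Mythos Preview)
Your proposal is correct and follows essentially the same route as the paper: use \cite[Lemma 2.13]{DrutuSapir} to get containment in a single asymptotic wall (using that $\tonde{\G,d_{CC}}$ is homeomorphic to $\R^{n-1}$ with $n\geq 3$, hence has no cut-points), then upgrade to equality via invariance of domain together with closedness of the bi-Lipschitz image. The paper's argument is slightly terser---it observes directly that $\phi$ is a closed map into a space homeomorphic to $\R^{n-1}$ and concludes---but the content is the same.
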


\begin{proof}
The space $\tonde{\G,d_{CC}}$ is homeomorphic to $\R^{n-1}$. Being $n \geq 3$, it does not have cutpoints. It follows from \cite[Lemma 2.13]{DrutuSapir} that $\phi \tonde{\G}$ is entirely contained in an asymptotic wall $\mathcal{W}$. This means that $\phi$ can be thought as a bi-Lipschitz map from $\G$ to a space homeomorphic to $\R^{n-1}$. The map $\phi$ is closed because it is bi-Lipschitz. This, together with the invariance of domain Theorem, implies that $\phi \tonde{\G} = \mathcal{W}$ and $f$ is a homeomorphism with its image.
\end{proof}

\begin{lem} \label{wallsGoNearWalls}
Let $K \geq 1$ and $c \geq 0$ be real numbers. There is a positive real number $B$, depending only on $K$, $c$ and $\wtde{M}$ such that for any $\tonde{K,c}$-quasi-isometric embedding $f \colon \G \to \wtde{M}$ the image $f \tonde{\G}$ lies at a Hausdorff distance at most $B$ from a unique wall in $\wtde{M}$.
\end{lem}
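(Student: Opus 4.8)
The plan is to pass to asymptotic cones and combine Lemma \ref{AsWallsInAsWalls} with a standard contradiction argument. Suppose, for contradiction, that the statement fails: then there is a sequence of $\tonde{K,c}$-quasi-isometric embeddings $f_i \colon \G \to \wtde{M}$ such that $f_i \tonde{\G}$ is \emph{not} within Hausdorff distance $i$ of any single wall of $\wtde{M}$. Since $\pi_1 \tonde{M}$ acts cocompactly by isometries on $\wtde{M}$, after precomposing each $f_i$ with a left translation of $\G$ and postcomposing with a deck transformation we may assume all $f_i$ send a fixed basepoint $e \in \G$ into a fixed compact ball $\mathcal{B} \subseteq \wtde{M}$; pick a point $x_i \in \mathcal{B}$ with $d \tonde{x_i, f_i \tonde{e}}$ bounded, so $\UF$-a.e.\ $x_i$ equals a single point $x_\infty$ for a suitable non-principal ultrafilter. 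First I would fix scaling factors $\lambda_i \to \infty$ (to be chosen below) and form the asymptotic cones $\CG \G = \CG \tonde{\G, \tonde{e}, \tonde{\lambda_i}, \UF}$ and $\CG \wtde{M} = \CG \tonde{\wtde{M}, \tonde{x_i}, \tonde{\lambda_i}, \UF}$. By Lemma \ref{asConesNilp}, $\CG \G$ is isometric to $\tonde{\G, d_{CC}}$; by Lemma \ref{QItoBL} the maps $f_i$ induce a $K$-bi-Lipschitz embedding $\CG f \colon \tonde{\G, d_{CC}} \to \CG \wtde{M}$. Then Lemma \ref{AsWallsInAsWalls} says $\CG f \tonde{\G}$ is exactly an asymptotic wall $\mathcal{W}$ of $\CG \wtde{M}$, hence of the form $\quadre{W_i}$ for some sequence of walls $W_i$ of $\wtde{M}$.

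Next I would upgrade this asymptotic coincidence into a uniform Hausdorff bound at finite scale. The key point is that $\CG f \tonde{\G} = \quadre{W_i}$ means: for the chosen scale $\lambda_i$, the set $f_i \tonde{\G}$ and the wall $W_i$ are $\UF$-a.e.\ within sublinear (in $\lambda_i$) Hausdorff distance on the ball of radius $\lambda_i$ about $x_i$. I want this for \emph{all} large scales simultaneously, which is the usual way one deduces a finite-scale statement from cones: I would argue that if no uniform bound $B$ worked, one could extract a subsequence and a sequence of scales $\lambda_i$ realizing the failure, and then the induced cone map would \emph{still} be a $K$-bi-Lipschitz embedding whose image is an asymptotic wall, yet the discrepancy between $f_i \tonde{\G}$ and every wall would be $\UF$-a.e.\ at least $\lambda_i$ (or unbounded relative to the scale), contradicting the fact that the cone of $f_i \tonde{\G}$ literally equals the cone of the wall sequence. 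Quantitatively one uses that $f_i \tonde{\G}$ is $\tonde{K,c}$-quasi-isometric to $\G$ and that walls are $Q$-bi-Lipschitz embedded (Lemma \ref{wallsBiLipEmb}), so "close in the cone" forces "close at finite scale up to a uniform error" — the constant $B$ depending only on $K$, $c$, $Q$, $D_W$ (Lemma \ref{DWDTW}) and the finitely many isometry types of walls (Lemma \ref{finIsoClasses}).

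For uniqueness: if $f_i \tonde{\G}$ were within Hausdorff distance $B$ of two distinct walls $W$ and $W'$, then $W$ and $W'$ would be within Hausdorff distance $2B$ of each other on an unbounded set (namely the image of $\G$, which has infinite diameter), so the $2B$-neighbourhoods of $W$ and $W'$ would have unbounded intersection. This directly contradicts Corollary \ref{wallsDiverge}, which bounds the diameter of the intersection of $R'$-neighbourhoods of two distinct walls by a constant $D'$ depending only on $R'$ and $\wtde{M}$. Hence the wall is unique.

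The main obstacle I anticipate is the second step: making rigorous the passage from "the asymptotic cone of $f_i \tonde{\G}$ equals the asymptotic cone of the wall sequence $\tonde{W_i}$" to "there is a \emph{uniform} finite-scale Hausdorff bound". One has to be careful that the cone identification is for one fixed ultrafilter and one fixed sequence of scales, whereas the desired bound $B$ must hold for every quasi-isometric embedding with the given constants and at every scale. The clean way around this is to phrase the whole thing as a single contradiction argument from the start — assume a sequence $f_i$ with failing bounds $i$, build one ultrafilter and one scaling sequence from that failure, and derive a contradiction with Lemma \ref{AsWallsInAsWalls} — rather than trying to quantify cone statements directly. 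The finiteness of isometry types of walls (Lemma \ref{finIsoClasses}) and the uniform bi-Lipschitz control (Lemma \ref{wallsBiLipEmb}) are what keep all constants depending only on $K$, $c$ and $\wtde{M}$.
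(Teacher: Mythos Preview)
Your overall strategy --- assume a sequence $f_m$ with Hausdorff distance at least $m$ from every wall, pass to asymptotic cones, and invoke Lemma \ref{AsWallsInAsWalls} --- is exactly the paper's. Your uniqueness argument via Corollary \ref{wallsDiverge} is correct. But the step you flag as ``the main obstacle'' is a genuine gap, and your proposed fixes do not close it.

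The problem is this: after you find that $\CG f \tonde{\CG \G}$ equals an asymptotic wall $\mathcal{W} = \quadre{W_m}$, you want to conclude a contradiction from the assumption that $d_{\mathrm{Haus}}\tonde{f_m\tonde{\G}, W_m} > m$. But that Hausdorff distance could be realised arbitrarily far from the basepoint $f_m\tonde{e}$: if $f_m\tonde{\G}$ and $W_m$ stay within distance $1$ of each other on the ball of radius $m^2$ around $f_m\tonde{e}$ and only diverge beyond that, then with scale $\lambda_m = m$ the cones coincide and you see no contradiction. Rephrasing as ``a single contradiction argument from the start'' does not help --- you already did that --- and the bi-Lipschitz constants of walls are irrelevant to this point.

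The missing idea, which the paper supplies, is to \emph{move the basepoint}. Along a geodesic in $\G$ from $e$ to a preimage of a witness point $p'_m$ with $d\tonde{p'_m, W_m} > m$, take the first parameter $t_m$ where $d\tonde{f_m\tonde{\gamma'_m\tonde{t_m}}, W_m}$ exceeds $m$, and set $p_m = f_m\tonde{\gamma'_m\tonde{t_m}}$. Now form a \emph{second} pair of cones with basepoints $\gamma'_m\tonde{t_m}$ and $p_m$ and scale $m$. In this new cone $\CG'\wtde{M}$, the image $\CG' f\tonde{\CG'\G}$ is again an asymptotic wall by Lemma \ref{AsWallsInAsWalls}; it contains the point $\quadre{p_m}$, which lies at distance exactly $1$ from the asymptotic wall $\mathcal{W}' = \quadre{W_m}$; and the reversed quasi-geodesics $t \mapsto f_m\tonde{\gamma'_m\tonde{t_m - mt}}$ give an infinite bi-Lipschitz ray lying in $\CG' f\tonde{\CG'\G}$ and in the $1$-neighbourhood of $\mathcal{W}'$. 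So two distinct asymptotic walls have neighbourhoods with unbounded intersection, contradicting tree-gradedness. The symmetric case (a point of $W_m$ far from $f_m\tonde{\G}$) is handled by the same rebasing with the roles swapped.
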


\begin{proof}
Let $\UF$ be a non-principal ultrafilter on $\N$. We denote by $\tonde{e}$ the sequence of points in $\G$ constantly equal to the identity element.

Suppose that the thesis is false. For any positive integer $m$ let $f_m$ be a $\tonde{K,c}$-quasi-isometric embedding of $\G$ in $\wtde{M}$ whose image stays at Hausdorff distance greater than $m$ from any wall. Consider asymptotic cones
\[ \CG \tonde{\G, \tonde{e}, \tonde{m}_{m \in \N}, \UF} \mbox{ and } \CG \tonde{\wtde{M}, \tonde{f_m \tonde{e}}, \tonde{m}_{m \in \N}, \UF} \]
of $\G$ and $\wtde{M}$ respectively and the $K$-bi-Lipschitz embedding $\CG f$ between them induced by the sequence $\tonde{f_m}$ via Lemma \ref{QItoBL}. By Lemma \ref{AsWallsInAsWalls} the image $f \tonde{\G}$ is an asymptotic wall $\mathcal{W}$ in $\CG \wtde{M}$, which is given by a sequence of walls $\tonde{W_m}$ of $\wtde{M}$, uniquely determined up to $\UF$-a.e. equality. % because of Lemma \ref{asWallsAreDistinctInCones}.

By hypothesis either for every $m$ there is a point $p'_m$ of $f_m \tonde{\G}$ at a distance greater than $m$ from $W_m$ or for every $m$ there is a point $p''_m$ of $W_m$ at a distance greater than $m$ from $f_m \tonde{\G}$. In the former case, for every positive integer $m$ we consider a geodesic $\gamma'_m \colon \quadre{0,T_m} \to \G$ between $e$ and a pre-image of $p'_m$. Let $t_m = \inf \graffe{t \in \quadre{0,T_m} | d \tonde{f_m \tonde{\gamma'_m \tonde{t}},W_m} > m}$ and let $p_m = f_m \tonde{\gamma'_m \tonde{t_m}}$. The $\UF-\lim \frac{d \tonde{p_m,f_m \tonde{e}}}{m}$ is infinite by construction.

Now we consider other asymptotic cones, namely
\[ \CG' \tonde{\G, \tonde{\gamma'_m \tonde{t_m}}, \tonde{m}, \UF} \mbox{ and } \CG' \tonde{\wtde{M}, \tonde{p_m}, \tonde{m}, \UF} \]
with the map $\CG' f$ between them induced by $\tonde{f_m}$. Consider furthermore the $\tonde{K,c}$-quasi-geodesics $\gamma_m \colon \quadre{0,t_m} \to \wtde{M}$ given by $\gamma_m \tonde{t} = f \circ \gamma'_m \tonde{t_m-t}$. By construction and infiniteness of the above $\UF$-limit, for any non-negative real number $t$ the point $\gamma \tonde{t}$ given by the class of the sequence $\quadre{\gamma_m \tonde{mt}}$ is well-defined and the resulting curve $t \mapsto \gamma \tonde{t}$ defined on the non-negative real half line is $K$-bi-Lipschitz. Also note that $\gamma \tonde{0}$ is precisely $\quadre{p_m}$ and that all the points in the image of $\gamma$ lie in $\CG' f \tonde{\CG' \G}$ and are at a distance at most $1$ from the asymptotic wall $\mathcal{W}'$ given by the same sequence $\tonde{W_m}$ as above; in particular, the point $\gamma \tonde{0}$ is at distance exactly $1$.

However, the image of $\CG' \G$ via $\CG' f$ in $\CG' \wtde{M}$ is still an asymptotic wall by Lemma \ref{AsWallsInAsWalls}, different from $\mathcal{W}'$ because this image contains $\gamma \tonde{0}$ and $\mathcal{W}'$ does not. The image of $\gamma$ provides us, in an asymptotic cone of $\wtde{M}$, a set of infinite diameter that lies in the intersection between the $2$-neighbourhood of an asymptotic wall and another asymptotic wall. This is in contradiction with the tree-gradedness of $\CG' \wtde{M}$.

An analogous argument with the roles of the sequences $\tonde{f_m \tonde{\G}}$ and $\tonde{W_m}$ switched holds if we consider the second possibility. The only difference is that in this case $\gamma'_m$ is a geodesic in $\wtde{M}$ between $f_m \tonde{e}$ and $p''_m$.
\end{proof}

We can sum up what we showed so far in the following

\begin{cor} \label{wallsBij}
Let $M_1$ and $M_2$ be two cusp-decomposable manifolds of dimensions $n_1$ and $n_2$ respectively, and let $f$ be a quasi-isometry between their universal covers $\wtde{M_1}$ and $\wtde{M_2}$. Then $n_1=n_2$, the pieces of $M_1$ and $M_2$ are locally isometric to the same symmetric geometry and there is a bijection between walls in $\wtde{M_1}$ and $\wtde{M_2}$ that takes a wall $W$ in $\wtde{M_1}$ to the unique wall in $\wtde{M_2}$ such that $f \tonde{W}$ is at a finite Hausdorff distance from it.
\end{cor}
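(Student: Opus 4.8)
=== PROOF PROPOSAL ===

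The plan is to combine the two wall-level statements already established (Lemma \ref{wallsGoNearWalls} together with its restatement in the setting of the $\G$-parametrized walls, and Lemma \ref{wallsBiLipEmb}/Lemma \ref{biLipWalls} which identify walls with $\G(M)\times$interval up to bi-Lipschitz equivalence) and then promote the correspondence $W \mapsto$ ``the unique wall near $f(W)$'' to an honest bijection by using the quasi-inverse of $f$. Concretely, first I would record that by Lemma \ref{biLipWalls} every wall $W$ of $\wtde{M_1}$ is $K_P$-bi-Lipschitz homeomorphic to $\G(M_1)\times\R$ and is, by Lemma \ref{wallsBiLipEmb}, $Q_1$-bi-Lipschitz embedded in $\wtde{M_1}$; composing with $f$ (a $(K,c)$-quasi-isometry) gives a $(K',c')$-quasi-isometric embedding of $\G(M_1)\times\R$, hence of $\G(M_1)$ (restricting to a factor $\G(M_1)\times\{0\}$, which is itself bi-Lipschitz embedded since thin walls are $K_E$-bi-Lipschitz embedded in their walls), into $\wtde{M_2}$. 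Applying Lemma \ref{wallsGoNearWalls} to this composition yields a unique wall $W' = \Phi(W)$ in $\wtde{M_2}$ at Hausdorff distance at most $B$ from $f(\G(M_1)\times\{0\})$, and since $f(W)$ itself is at bounded Hausdorff distance from $f(\G(M_1)\times\{0\})$ (the $\R$-factor of $W$ has bounded ``width'' relative to its thin wall — this is exactly the content of $K_E$-bi-Lipschitz embeddedness together with the explicit collar of length $3$ in the construction), we get that $f(W)$ lies at finite Hausdorff distance from $\Phi(W)$. This defines the map $\Phi$ on walls.

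Next I would argue $\Phi$ is injective. If $\Phi(W_1)=\Phi(W_2)=W'$ then $f(W_1)$ and $f(W_2)$ are both at finite Hausdorff distance from $W'$, hence at finite Hausdorff distance from each other; pulling back by a quasi-inverse $g$ of $f$ (which is again a quasi-isometry, so preserves finiteness of Hausdorff distance up to an additive constant coming from $d(x,g\circ f(x))$ being uniformly bounded), $W_1$ and $W_2$ are at finite Hausdorff distance in $\wtde{M_1}$. But Corollary \ref{wallsDiverge} says distinct walls are at infinite Hausdorff distance, so $W_1=W_2$. For surjectivity I would run the symmetric argument with a quasi-inverse $g\colon\wtde{M_2}\to\wtde{M_1}$: given a wall $W'$ in $\wtde{M_2}$, the map $g$ sends it near a unique wall $W$ in $\wtde{M_1}$, and then one checks $\Phi(W)=W'$ because $f\circ g$ moves points a bounded amount, so $f(W)$ is near $f(g(W'))$ which is near $W'$, forcing the unique wall near $f(W)$ to be $W'$ by the uniqueness clause of Lemma \ref{wallsGoNearWalls}. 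This establishes the bijection.

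Finally, the statements $n_1=n_2$ and ``pieces locally isometric to the same symmetric geometry'' follow from Lemma \ref{DistNilpGroups}: the bijection $\Phi$ identifies a wall $W\cong\G(M_1)\times\R$ (bi-Lipschitz) with a wall $\Phi(W)\cong\G(M_2)\times\R$, and the restricted quasi-isometry becomes a bi-Lipschitz homeomorphism between asymptotic cones of these walls (via Lemma \ref{QItoBL} applied to the quasi-isometry and its quasi-inverse, giving a bi-Lipschitz equivalence in both directions, hence a bi-Lipschitz homeomorphism onto the image which by Lemma \ref{AsWallsInAsWalls} and invariance of domain is the whole asymptotic wall); asymptotic cones of $\G(M_i)$ are $\G(M_i)$ with a Carnot-Caratheodory metric by Lemma \ref{asConesNilp}, so Lemma \ref{DistNilpGroups} forces $\G(M_1)\cong\G(M_2)$ as Lie groups, and the Lie-group isomorphism type of $\G(M_i)$ (either $\R^{n_i-1}$, or $\G(n_i/k-1,\F)$) determines both $n_i$ and $\F$, hence the symmetric geometry.

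The main obstacle I expect is the bookkeeping around Hausdorff distances: making precise that $f(W)$, $f(\G(M_1)\times\{0\})$, and $\Phi(W)$ are pairwise at \emph{finite} (and, where needed, \emph{uniformly} bounded) Hausdorff distance, and that the uniqueness in Lemma \ref{wallsGoNearWalls} is robust enough to pin down $\Phi(W)$ regardless of which bi-Lipschitz parametrization of $W$ by $\G(M_1)$ we chose. Everything else is an assembly of the already-proven lemmas, but one must be careful that the ``depending only on $\wtde{M}$'' uniformity of the constants in Lemma \ref{wallsGoNearWalls}, Lemma \ref{wallsBiLipEmb} and Lemma \ref{biLipWalls} is genuinely used so that the bijection is well-defined on \emph{all} walls at once.
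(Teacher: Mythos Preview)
Your overall strategy --- define a map on walls via Lemma~\ref{wallsGoNearWalls}, get injectivity from Corollary~\ref{wallsDiverge}, get surjectivity from the quasi-inverse, and read off the symmetric geometry from Lemma~\ref{DistNilpGroups} via asymptotic cones --- is exactly the paper's. But there is a circularity in the order you carry it out.

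Lemma~\ref{wallsGoNearWalls} is stated for quasi-isometric embeddings $\G\to\wtde{M}$ where $\G=\G(M)$ is the nilpotent group attached to the \emph{target} manifold; its proof goes through Lemma~\ref{AsWallsInAsWalls}, which uses invariance of domain to conclude that the image of $\CG\G$ is a \emph{whole} asymptotic wall, and this step needs the dimensions to match. You apply the lemma to an embedding of $\G(M_1)$ into $\wtde{M_2}$ \emph{before} you know $\G(M_1)\cong\G(M_2)$, so as written the lemma does not apply, and you then use the resulting map $\Phi$ to prove $\G(M_1)\cong\G(M_2)$. The paper avoids this by reversing the order: it \emph{first} takes a single thin wall in $\wtde{M_1}$, passes to asymptotic cones, uses only the no-cutpoints half of the argument (\cite[Lemma~2.13]{DrutuSapir}) to land inside an asymptotic wall of $\CG\wtde{M_2}$, and then invokes invariance of domain and Lemma~\ref{DistNilpGroups} directly to get $n_1=n_2$ and $\G(M_1)\cong\G(M_2)$; only after that does it appeal to Lemma~\ref{wallsGoNearWalls} to define and analyze the bijection on walls. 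Your argument becomes correct once you move your final paragraph to the beginning and run it without reference to $\Phi$ (working instead with a single wall and its image in asymptotic cones).
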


\begin{proof}
Suppose without loss of generality that $n_1 \geq n_2$. A wall in $\wtde{M_1}$ is at a finite Hausdorff distance from any its thin wall, which is the image of a bi-Lipschitz embedding $\iota$ of $\tonde{\G \tonde{M_1},d_R}$. Consider some asymptotic cones $\CG \G \tonde{M_1}$, $\CG \wtde{M_1}$ and $\CG \wtde{M_2}$ of $\G \tonde{M_1}$, $\wtde{M_1}$ and $\wtde{M_2}$ respectively. Remember from Lemma \ref{asConesNilp} that $\CG \G \tonde{M_1}$ is bi-Lipschitz homeomorphic to $\tonde{\G \tonde{M_1},d_{CC}}$, and is thus in particular homeomorphic $\R^{{n_1}-1}$ by Lemma \ref{dCCsameTop}. By \cite[Lemma 2.13]{DrutuSapir} the image of the bi-Lipschitz embedding $\CG f \circ \CG \iota$ of $\tonde{\G\tonde{M_1},d_{CC}}$ in $\CG \wtde{M_2}$ must lie in an asymptotic wall $\barr{W}$ of $\CG \wtde{M_2}$. Analogously, the asymptotic wall $\barr{W}$ is homeomorphic to $\R^{n_2-1}$. From the invariance of domain it follows that $n_1 = n_2$ and that $\CG f \circ \CG \iota \tonde{\CG \G \tonde{M_1}}$ is an open subset of $\barr{W}$. The map $\CG f \circ \CG \iota$ then provides a bi-Lipschitz homeomorphism between $\tonde{\G \tonde{M_1},d_{CC}}$ and an open subset of $\barr{W}$, which is bi-Lipschitz homeomorphic to $\tonde{\G \tonde{M_2},d_{CC}}$. Lemma \ref{DistNilpGroups} implies that $\G \tonde{M_1}$ and $\G \tonde{M_2}$ are isomorphic Lie groups, and hence that pieces of $M_1$ and $M_2$ are locally isometric to the same symmetric geometry.

The good definition and injectivity of the function induced on walls follows now from Lemma \ref{wallsGoNearWalls}, the fact that different walls lie at infinite Hausdorff distance and the definition of quasi-isometry. In order to get bijectivity it is sufficient to apply the same argument to a quasi-inverse of $f$.
\end{proof}

We now consider the behaviour of quasi-isometries on chambers. First we need some additional results on the geometry of $\wtde{M}$.

\begin{defin}
Let $W_1$ and $W_2$ be two distinct walls of $\wtde{M}$. We say that a third wall $W$ \emph{separates} $W_1$ and $W_2$ if $W_1$ and $W_2$ are contained in two different connected components of $\wtde{M} \setminus W$. Equivalently, the egdes corresponding to $W_1$ and $W_2$ lie in different connected components of the Bass-Serre tree of $\wtde{M}$ with the edge corresponding to $W$ removed; or any path from a point of $W_1$ to a point of $W_2$ intersects $W$.
\end{defin}

Now we want to characterize the set of walls adjacent to a certain chamber of $\wtde{M}$ in terms of separation.

\begin{lem} \label{wallsOfSameChamber}
Let $\Omega$ be a collection of walls of $\wtde{M}$. Then there exists a chamber $C$ that is adjacent to all the walls in $\Omega$ if and only if for any pair of distinct walls in $\Omega$ no other wall in $\wtde{M}$ separates them.
\end{lem}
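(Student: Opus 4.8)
The plan is to reduce the statement to a purely combinatorial fact about the Bass--Serre tree and then invoke the Helly property for edges of a tree. First I would recall the relevant structural facts: distinct walls are disjoint (Lemma~\ref{DWDTW}); a chamber is a neutered space and its thin walls are the boundary horospheres of that neutered space, so the complement in a chamber of \emph{any} subfamily of its thin walls is still connected; and, since $\wtde{M}$ is simply connected, each wall separates $\wtde{M}$ into exactly two components. Consequently $\wtde{M}$ is organised as a tree of chambers glued along walls over the Bass--Serre tree $T$ of Definition~\ref{BassSerreTree} --- this is just the structure of the universal cover of the graph of spaces underlying Lemma~\ref{FundGroupGraph} --- so that every wall with two thin walls joins two distinct chambers and every boundary wall is adjacent to a single chamber. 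Attaching to $T$ one terminal edge for each boundary wall I obtain a tree $\hat{T}$ whose edges $e_W$ are in bijection with the walls $W$, in which a chamber is adjacent to $W$ precisely when it is the chamber endpoint of $e_W$, and --- by the two equivalent descriptions of ``separates'' given just before the statement --- a wall $W$ separates $W_1$ from $W_2$ precisely when $e_{W_1}$ and $e_{W_2}$ lie in different components of $\hat{T}\setminus e_W$.

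Next I would record the elementary tree fact that, for two distinct edges $e_1\neq e_2$ of a tree, no third edge $e$ has $e_1$ and $e_2$ in different components of $T\setminus e$ if and only if $e_1$ and $e_2$ share a vertex: if they share a vertex $v$ then any $e\notin\{e_1,e_2\}$ leaves both $e_1$ and $e_2$ incident to $v$; and if they share no vertex then the first edge of the geodesic between them separates them. The ``only if'' direction of the lemma is then immediate: if $C$ is adjacent to all walls of $\Omega$ then $C$ is a common endpoint of all the edges $e_W$ with $W\in\Omega$, so every two of them share the vertex $C$ and hence no wall separates them. (Equivalently, and without the tree: for $W\notin\{W_1,W_2\}$ the connected set $C\setminus W$ contains the nonempty sets $C\cap W_1$ and $C\cap W_2$ --- walls being disjoint --- which lie respectively in the two components of $\wtde{M}\setminus W$ containing $W_1$ and $W_2$, contradicting $W$ separating $W_1$ from $W_2$.)

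For the ``if'' direction, the hypothesis that no wall separates any two members of $\Omega$ becomes, via the two facts above, the assertion that the edges $\{e_W:W\in\Omega\}$ of $\hat{T}$ pairwise share a vertex, while the desired conclusion is that they have a common vertex; this is exactly the Helly property for edges of a tree, and since edges are finite subtrees there is no issue with $\Omega$ being infinite. I would prove it directly: if $e_1\neq e_2$ lie in the family and meet at $v$ but some $e_3$ in the family avoids $v$, then $e_3$ meets $e_1$ at the other endpoint $w_1$ of $e_1$ and meets $e_2$ at the other endpoint $w_2$ of $e_2$; were $w_1\neq w_2$, the vertices $v,w_1,w_2$ would span a triangle, impossible in a tree, so $w_1=w_2$ and thus $e_1=e_2$, a contradiction. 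Hence every edge of the family contains $v$, and this common vertex is a chamber --- it cannot be a terminal tip of $\hat{T}$, which has degree $1$, unless $\Omega$ consists of a single wall, in which case the statement is trivial --- which by construction is adjacent to every wall in $\Omega$. The only genuinely delicate point is the bookkeeping of the first paragraph, namely verifying that $\wtde{M}$ really is a tree of chambers over $\hat{T}$ and that the topological and tree-theoretic meanings of ``separates'' coincide under this identification; once that is in place the lemma is just the Helly property proved above.
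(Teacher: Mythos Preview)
Your proof is correct and, at its core, uses the same idea as the paper's: both arguments ultimately rely on the tree structure encoded by the Bass--Serre tree and on the fact that an edge on the geodesic between two non-incident edges separates them. The presentations differ in emphasis. The paper argues more directly and geometrically: it fixes some $W'\in\Omega$, observes that all other walls of $\Omega$ lie on the same side of $W'$, takes the adjacent chamber $C'$ on that side as the candidate, and then shows by contradiction that any $W''\in\Omega$ not adjacent to $C'$ would be separated from $W'$ by an edge on the tree path from $C'$ to the chamber near $W''$. You instead translate the whole problem to the enlarged tree $\hat{T}$, reformulate the hypothesis as ``the edges $e_W$ pairwise share a vertex'', and recognise the conclusion as the Helly property for edges of a tree, which you then prove cleanly. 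Your route is a bit more abstract and makes the combinatorial content explicit; the paper's route is shorter and avoids introducing $\hat{T}$, handling boundary walls implicitly. Neither gains anything essential over the other, and your careful bookkeeping (the equivalence of the two notions of ``separates'', the treatment of boundary walls via terminal edges, and the remark that the common vertex cannot be a terminal tip when $|\Omega|\ge 2$) is exactly what is needed to make the reduction rigorous.
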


\begin{proof}
If two walls $W_1$ and $W_2$ are adjacent to the same chamber $C$, and $W$ is a third wall, then $W_1$ and $W_2$ lie in the same connected component of $\wtde{M} \setminus W$ as $C$. This proves the ``only if'' part of the assertion.

Suppose now that no pair of walls in $\Omega$ can be separated by some wall in $\wtde M$, and let $W'$ be a wall in $\Omega$. By hypothesis, all the other walls in $\Omega$ must belong to the same connected component of $\wtde{M} \setminus W'$. Let then $C'$ be the chamber adjacent to $W'$ in the same connected component of $\wtde{M} \setminus W'$ as the other walls in $\Omega$. We claim that all these other walls are adjacent to $C'$. Suppose that there is a wall $W''$ which is not adjacent to $C'$. Consider the chamber $C''$ adjacent to $W''$ that lies in the same connected component of $\wtde{M} \setminus W''$ as $W'$ and $C'$. Then $C' \neq C''$, and any wall corresponding to an edge in the unique injective path between vertices corresponding to $C'$ and $C''$ in the Bass-Serre tree of $\wtde{M}$ separates $W'$ and $W''$.
\end{proof}

\begin{lem} \label{connComplement}
Let $\mathcal{N}$ be a neutered space of dimension at least $3$, and $R$ a positive real number. Then the complement $\mathcal{S}$ in $\mathcal{N}$ of the open $R$-neighbourhood of a boundary horosphere is path-connected.
\end{lem}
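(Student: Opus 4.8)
The plan is to show that a neutered space $\mathcal{N}$ of dimension $m \geq 3$, with a single boundary horosphere $\mathcal{H}$ removed together with its open $R$-neighbourhood, stays path-connected. The key point is that $\mathcal{N}$ is obtained from a negatively curved symmetric space $Y$ by removing a $\Gamma$-invariant family of disjoint open horoballs, so $\mathcal{N}$ itself is a codimension-zero submanifold-with-boundary of $Y$, and removing a collar-like neighbourhood of one boundary component should not disconnect it because the "rest" of $\mathcal{N}$ is still connected and the obstruction only happens near that one horosphere. I would first reduce to the following local picture: let $H$ be the horoball in $Y$ whose bounding horosphere, after quotienting, gives the relevant boundary component, and let $\widetilde{\mathcal{S}}$ be the preimage of $\mathcal{S}$ in $\mathcal{N}$, i.e. $\mathcal{N}$ minus the open $R$-neighbourhood of $H$. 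It suffices to prove $\widetilde{\mathcal{S}}$ is path-connected, since $\mathcal{S}$ is its image under a covering-type projection (actually $\mathcal{N}$ covers $N\times[0,3]$ only in the wall picture; here I would just work directly in $\mathcal{N} = \wtde{P}$ or even pass to $\wtde{X}$).

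The cleanest route is to work upstairs in the symmetric space $\wtde{X} = Y$. Write $Y \setminus (\text{open } R\text{-neighbourhood of } H) =: Y_R$, which deformation-retracts onto the closed complement of $H$ by pushing along the gradient flow of the Busemann function $b$ relative to the point at infinity of $H$ (the gradient flow lines are geodesics running away from that point, they are defined for all forward time, and they are transverse to the horospheres); in particular $Y_R$ is connected, indeed contractible, since $Y\setminus H$ is a closed half-space-like region homeomorphic to $Y$. Now $\mathcal{N}$ sits inside $Y$ as $Y$ minus a disjoint union of open horoballs, one of which is $H$ (or a $\Gamma$-translate of it); and $\mathcal{S}$ (upstairs) is $\mathcal{N} \cap Y_R$, i.e. $Y_R$ with all the \emph{other} open horoballs of the family removed. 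So the statement to prove becomes: removing from the connected open set $Y_R$ a locally finite family of disjoint open horoballs, none of which is all of $Y_R$ and none of which locally separates (each such horoball has connected, codimension-zero complement in $Y$, hence connected complement in the open set $Y_R$ when $m \geq 3$), leaves a connected set. This follows by a standard general-position / path-pushing argument: given two points $x, y \in \mathcal{S}$, take any path between them in $Y_R$, and whenever it enters one of the removed open horoballs $H'$, replace that subarc by an arc on the horosphere $\partial H'$ (possible because $\partial H'$, being a horosphere in $Y$, is connected — it is isometric to a nilpotent Lie group or $\R^{m-1}$ — and because $\partial H' \cap Y_R$ is nonempty and still connected: removing an $R$-neighbourhood of a \emph{different} horoball from a horosphere does not disconnect it when $m-1 \geq 2$, since horospheres in rank-one symmetric spaces are homeomorphic to $\R^{m-1}$ and the intersection with a single horoball is a "round ball"-type region with connected complement). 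Local finiteness guarantees only finitely many such replacements are needed.

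The main obstacle I expect is justifying the two connectedness facts used in the surgery step: (i) that the complement of a single open horoball in $Y$ (and hence in the open subset $Y_R$) is path-connected, and (ii) that the intersection of one horosphere with the complement of an $R$-neighbourhood of a disjoint horoball is path-connected. Fact (i) is where $m \geq 3$ enters: two disjoint horoballs in a rank-one symmetric space meet the same geodesic in at most two "ends", and the complement of a horoball is homeomorphic to $\sfera^{m-1}\times[0,\infty)$-minus-a-ball, which is connected for $m\geq 2$, but to push arcs \emph{off} a horosphere into the complement we genuinely need $m-1\geq 2$. Fact (ii) is similar one dimension down, again needing $m-1\geq 2$, i.e. $m\geq 3$; here I would use that within a horosphere $\wtde{N}\cong\R^{m-1}$ the trace of a disjoint horoball's neighbourhood is a metrically convex (for $d_{CC}$, hence at least connected and simply-connected-complement) bounded set, so its complement in $\R^{m-1}$ is path-connected. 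Both facts can be extracted from the structure of horoballs and horospheres recalled in the Preliminaries (uniqueness of the geodesic from a point to a boundary point, convexity of horoballs, the shape of horospheres), so the write-up is really a matter of assembling these standard pieces carefully rather than proving anything new.
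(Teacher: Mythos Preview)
Your path-surgery strategy is a legitimate alternative to the paper's argument, and the two are in a sense dual. The paper first uses the connectedness of $\mathcal{N}$ to connect any point of $\mathcal{S}$ to the horosphere $\mathcal{O}=\{b_{i_0}=R\}$, and then proves that $\mathcal{N}\cap\mathcal{O}$ is path-connected by using the gradient flows of the Busemann functions $b_i$ (for $i\neq i_0$) restricted to $\mathcal{O}$: these flows exhibit $\mathcal{N}\cap\mathcal{O}$ as a deformation retract of $\mathcal{O}$ minus the discrete set $\{p_i\}$ of intersections of $\mathcal{O}$ with the geodesics joining $\omega_{i_0}$ to $\omega_i$, and that complement is connected since $\dim\mathcal{O}\geq 2$. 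You instead start from a path in $Y_R$ and push it off each foreign horoball $\mathcal{B}_i$ it meets, along $\partial\mathcal{B}_i\cap Y_R$. Both routes reduce to the same type of claim --- a horosphere minus the trace of horoballs based at other boundary points stays connected in dimension $\geq 2$ --- only with the roles of the distinguished horoball and the others interchanged. Your version has the mild advantage that (ii) involves removing only \emph{one} horoball from $\partial H'$, whereas the paper removes all of them from $\mathcal{O}$ at once.

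The one genuine gap is your justification of fact (ii). The assertion that the trace of $H_R$ on $\partial H'$ is ``metrically convex for $d_{CC}$'' is neither proved nor, outside the real hyperbolic case, evidently true: for $\mathbb{C}\hyp^m$ the trace on a Heisenberg horosphere is a Kor\'anyi-type region, and such regions are not known to be geodesically convex for the Carnot--Carath\'eodory metric; and in any case ``convex $\Rightarrow$ connected complement'' is not a general metric fact. The clean argument is precisely the paper's mechanism transported to your setting: the restriction $b_{i_0}|_{\partial H'}$ is smooth and proper (it tends to $+\infty$ away from compact sets in $\partial H'\cong\R^{m-1}$) with a \emph{unique} critical point, namely the intersection of $\partial H'$ with the geodesic joining $\omega_{i_0}$ to $\omega_{H'}$; its gradient flow then exhibits $\partial H'\cap Y_R=\{b_{i_0}|_{\partial H'}\geq R\}$ as a deformation retract of $\partial H'$ minus that single point, which is connected as soon as $m-1\geq 2$. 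With (ii) established this way your surgery goes through: the replacement arc on $\partial\mathcal{B}_i$ cannot meet any other $\mathcal{B}_j$ because the horoballs are pairwise disjoint, and local finiteness of the family (a consequence of proper discontinuity of the $\pi_1(P)$-action) guarantees finitely many surgeries along a compact path. The opening paragraph about passing to covers is unnecessary --- $\mathcal{N}$ is already the neutered space, there is nothing further to lift --- but you effectively discard it yourself.
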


\begin{proof}
The neutered space $\mathcal{N}$ is the complement of a collection $\tonde{\mathcal{B}_i}$ of pairwise non-intersecting open horoballs in the symmetric space $Y$. By definition, for every $i$ there exists a Busemann function $b_i$ such that $\mathcal{B}_i$ is precisely the set $\graffe{b_i<0}$. The subspace $\mathcal{S}$ is obtained by taking in $Y$ the complement of all $\mathcal{B}_i$'s and of the open $R$-neighbourhood of a distinguished horoball $\mathcal{B}_{i_0}$, which is precisely the set $\graffe{b_{i_0}<R}$.

The space $\mathcal{N}$ is path-connected. This means that for every $p$ in $\mathcal{N}$ with $b_{i_0} \tonde{p} \geq R$ there is a path $\alpha \colon \quadre{0,1} \to \mathcal{N}$ joining it with a point in the set $\graffe{b_{i_0} = 0}$. If $t_p$ is the minimum of the non-empty closed set $\tonde{b_{i_0} \circ \alpha}^{-1} \tonde{\graffe{R}}$, then $\restr{\alpha}{\quadre{0,t_p}}$ is a path between $p$ and a point in the set $\graffe{b_{i_0} = R}$ entirely lying in $\mathcal{S}$, so it only remains to prove that the subspace $\mathcal{N} \cap \graffe{b_{i_0} = R}$ of $Y$ is path-connected.

Let $\mathcal{O}$ be the horosphere $\graffe{b_{i_0} = R}$ in $Y$. For every index $i$, let $\omega_i$ be the point at infinity of $Y$ to which the Busemann function $b_i$ is relative. For every $i \neq i_0$ such that $\mathcal{B}_i \cap \mathcal{O}$ is non-empty consider the unique geodesic line $\gamma_i$ in $Y$ such that one of its half-lines is in the class of $\omega_{i_0}$ and the other in the class of $\omega_{i}$, and let $p_i$ be the intersection of the image of $\gamma_i$ with $\mathcal{O}$. Note that $p_i \in \mathcal{B}_i \cap \mathcal{O}$. Using the gradient flow of the functions $b_i$ restricted to $\mathcal{O}$ one can prove that $\mathcal{N} \cap \mathcal{O}$ is a deformation retract of $\mathcal{O}$ with the points $p_i$ removed. This latter space is a discrete collection of points by construction, and thus its complement is path-connected being the dimension of $Y$ at least $3$ and thus the dimension of $\mathcal{O}$ at least $2$. But then the homotopically equivalent space $\mathcal{N} \cap \mathcal{O}$ is also path-connected.
\end{proof}

Now we can prove the following

\begin{lem} \label{wallsInSameChamberRemainInSameChamber}
Take two cusp-decomposable manifolds $M_1$ and $M_2$. Let $f$ be a $\tonde{K,c}$-quasi-isometry between their universal covers $\wtde{M_1}$ and $\wtde{M_2}$ and let $\barr{f}$ be the associated bijection between walls given by Corollary \ref{wallsBij}. Then the collection of walls adjacent to a chamber $C$ in $\wtde{M_1}$ is sent by $\barr{f}$ to the collection of walls adjacent to one chamber in $\wtde{M_2}$.
\end{lem}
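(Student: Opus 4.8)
The plan is to convert the statement, via the separation criterion of Lemma~\ref{wallsOfSameChamber}, into a purely coarse fact: no wall of $\wtde{M_2}$ can separate the $\barr f$-images of two walls adjacent to a common chamber of $\wtde{M_1}$; and then to prove this by pushing connecting paths away from the would-be separating wall. Let $\Omega$ be the set of walls adjacent to $C$. By the ``if'' part of Lemma~\ref{wallsOfSameChamber} it suffices to show that, for any two distinct $W_1,W_2\in\Omega$, no wall $V$ of $\wtde{M_2}$ separates $\barr f(W_1)$ and $\barr f(W_2)$. Write $V=\barr f(W)$ (the correspondence $\barr f$ is a bijection by Corollary~\ref{wallsBij}). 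Applying the ``only if'' part of Lemma~\ref{wallsOfSameChamber} to $\Omega$, the wall $W$ is distinct from $W_1,W_2$ and does not separate them, so $W_1$ and $W_2$ lie in a common connected component $U$ of $\wtde{M_1}\setminus W$, equivalently the geodesic between the edges of $W_1$ and $W_2$ in the Bass-Serre tree (Definition~\ref{BassSerreTree}) avoids the edge of $W$. Note also $n_1=n_2\geq 3$ by Corollary~\ref{wallsBij}.

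The geometric core is the following claim: for every $R>0$ there is a continuous path in $\wtde{M_1}$ from a point of $W_1$ to a point of $W_2$ disjoint from the $R$-neighbourhood of $W$. Since $\wtde{M_1}$ is a complete Riemannian manifold, hence geodesic, I build it along the tree geodesic above, which visits a finite alternating list of chambers and walls, none of them $W$, realizing a path that crosses the intermediate thin walls, and I reroute it to stay $R$-far from $W$ piece by piece. Inside an intermediate wall $W'\neq W$ the set $W'\cap N_R(W)$ has diameter bounded in terms of $R$ only (walls diverge, Corollary~\ref{wallsDiverge}), while $W'$ is bi-Lipschitz to $\G\times[0,3]$ with $\G$ a connected manifold of dimension $\geq 2$ (Lemma~\ref{wallsBiLipEmb}), so I can take the entry and exit points on its two thin walls far from $W$ and join them inside $W'$ by a ``vertical'' segment missing the bad region. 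Inside an intermediate chamber $C'$, using that minimizing geodesics respect the chamber structure (convexity of thin walls, as in the proof of Lemma~\ref{wallsBiLipEmb}) together with the bi-Lipschitz embedding of $C'$ (Lemma~\ref{chambersBiLipEmb}) and Corollary~\ref{wallsDiverge}, the set $C'\cap N_R(W)$ lies in a neighbourhood of controlled size of the single boundary horosphere of $C'$ facing $W$; hence, as $\dim C'\geq 3$, Lemma~\ref{connComplement} lets me join the relevant boundary points inside $C'$ while avoiding that region. Finally I choose the endpoints $p_i\in W_i$ at distance $>R$ from $W$ — possible since only a bounded part of $W_i$ is within $R$ of $W$ by Corollary~\ref{wallsDiverge}, whereas $W_i$ is unbounded — and patch all the pieces together.

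The last step is a routine coarsening argument. Suppose $V$ separated $\barr f(W_1)$ from $\barr f(W_2)$, and let $\gamma$ be the path produced above for a value $R=R_0$ to be fixed. Subdivide $\gamma$ into steps of length at most $1$ and connect the $f$-images of the subdivision points by geodesics of $\wtde{M_2}$: this yields a continuous path within $K+c$ of $f(\gamma)$, from $f(\gamma(0))\in f(W_1)$ to $f(\gamma(1))\in f(W_2)$; prepending and appending geodesics of length at most $B$, the uniform Hausdorff constant of Lemma~\ref{wallsGoNearWalls}, I reach actual points of $\barr f(W_1)$ and $\barr f(W_2)$, which lie in different components of $\wtde{M_2}\setminus V$, so the resulting path meets $V$. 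Inspecting where the crossing occurs and using $V\subseteq\overline{N_B(f(W))}$ (Lemma~\ref{wallsGoNearWalls}) together with the quasi-isometry inequality for $f$, I obtain a point of $\gamma$ within $R_0:=\max\graffe{K(2B+c),\,K(K+2c+B)}$ of $W$; since $R_0$ depends only on $K$, $c$ and $\wtde{M_2}$, taking exactly this $R_0$ in the construction of $\gamma$ yields a contradiction. Hence no wall of $\wtde{M_2}$ separates $\barr f(W_1)$ from $\barr f(W_2)$, and Lemma~\ref{wallsOfSameChamber} produces a chamber of $\wtde{M_2}$ adjacent to all of $\barr f(\Omega)$, as required. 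The main obstacle is the rerouting claim: the uniformity in $R$, the bookkeeping between the intrinsic distances in chambers and walls and the ambient distance through the bi-Lipschitz embeddings, and the degenerate configurations (boundary walls, and walls both of whose thin walls lie in the same chamber, where Lemma~\ref{connComplement} must be applied to several boundary horospheres at once) all demand care.
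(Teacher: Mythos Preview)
Your strategy is exactly the paper's: reduce via Lemma~\ref{wallsOfSameChamber} to showing that no $\barr f(W)$ separates $\barr f(W_1)$ from $\barr f(W_2)$, and prove this by constructing a path from $W_1$ to $W_2$ that stays uniformly far from $W$ and pushing it forward. Note that since $W_1$ and $W_2$ are both adjacent to $C$, your ``tree geodesic'' is just $W_1$--$C$--$W_2$: there are no intermediate walls and the only chamber traversed is $C$ itself, so the generality of your rerouting claim is never used. Your final coarsening step (subdividing $\gamma$ and joining the $f$-images by geodesics in $\wtde{M_2}$) is in fact more careful than the paper, which leaves the lack of continuity of $f\circ\gamma$ implicit.

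There is, however, one case you do not cover. When $W$ lies behind $W_1$, i.e.\ $W_1$ separates $W$ from $W_2$, the ``single boundary horosphere of $C$ facing $W$'' is precisely the thin wall of $W_1$ bounding $C$. Invoking Lemma~\ref{connComplement} for \emph{that} horosphere yields a path-connected region containing no point of that thin wall, so your path cannot start on $W_1$ as required. This is the paper's second case, and its fix is different from what you sketch: one observes that in this configuration $C\cap N_D(W)$ is actually contained in a bounded $d_C$-ball (Corollary~\ref{wallsDiverge} applied to $W$ and $W_1$, since every point of $C$ close to $W$ must also be close to $W_1$), chooses a \emph{third} wall $W_3$ of $C$ distinct from $W_1$ and $W_2$, engulfs the ball in a large $d_C$-neighbourhood of the thin wall of $W_3$, and applies Lemma~\ref{connComplement} to that horosphere instead. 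Your list of ``degenerate configurations'' does not include this situation.
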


\begin{proof}
By Lemma \ref{wallsOfSameChamber} it is enough to prove that for any pair of walls $W_1$ and $W_2$ adjacent to $C$ the walls $\barr{f} \tonde{W_1}$ and $\barr{f} \tonde{W_2}$ are not separated by any wall in $\wtde{M_2}$. A wall in $\wtde{M_2}$ is necessarily $\barr{f} \tonde{W}$ for some wall $W$ in $\wtde{M_1}$. We will construct a continuous path $\gamma$ with endpoints on $W_1$ and $W_2$ that always stays at a distance at least $D=K\tonde{K+2c+B}+1$ away from $W$, where $B$ is the constant given by Lemma \ref{wallsGoNearWalls}. This way the image path $f \circ \gamma$ stays at a distance strictly greater than $K+c+B$ from $f \tonde{W}$ and then at a distance strictly greater than $K+c$ from $\barr{f} \tonde{W}$. However, if $\barr{f} \tonde{W}$ separated $\barr{f} \tonde{W_1}$ and $\barr{f} \tonde{W_2}$, at least one point of $f \circ \gamma$ should stay at a distance strictly less than $K+c$ from $\barr{f} \tonde{W}$.

There are two possible mutual positions of $W_1$, $W_2$ and $W$:
\begin{itemize}
\item None of the three walls separates the other two;
\item The wall $W_1$ separates $W$ and $W_2$, and the analogous situation with $1$ and $2$ inverted.
\end{itemize}
In fact it is not possible that $W$ separates the other two walls by construction.

In the first case, let $W'$ be the wall of $C$ which is either $W$ itself or which leaves $C$ and $W$ in different connected components of $\wtde{M}$ when removed. The chamber $C$ is bi-Lipschitz embedded in $\wtde{M_1}$, and the restriction of the metric tensor of $\wtde{M_1}$ to $C$ makes it isometric to a neutered space with distance function $d_C$. This means that there is a positive real number $R$ such that the $R$-neighbourhood $W'_R$ of $W'$ with respect to $d_C$ contains the $D$-neighbourhood of $W$ with respect to the metric of the whole $\wtde{M_1}$. By the previous lemma, the complement in $C$ of $W'_R$ is connected, and no wall is contained in the $R$-neighbourhood of another, so there are points of $W_1$ and $W_2$ connected by a piecewise smooth path staying outside this neighbourhood.

In the second case, an easy consequence of Corollary \ref{wallsDiverge} and of the bi-Lipschitz embedding of $C$ in $\wtde{M_1}$ is that the intersection of $C$ with the $D$-neighbourhood of $W$ in $\wtde{M_1}$ is contained in a $d_C$-ball centered on the thin wall separating $C$ and $W_1$. Take a third wall $W_3$ in $C$ different from $W_1$ and $W_2$. The above mentioned ball is contained in an $R$-neighbourhood of $W_3$ with respect to $d_C$ for $R$ large enough, and the complement in $C$ of this neighbourhood is still connected by the previous lemma, which allows us to conclude in the same way as in the first case.
\end{proof}

% \emph{Remark}: With some more work it can be proven that the complement of the $R$-neighbourhood of a wall $W$ in $\wtde{M}$ has two unbounded connected components if $W$ is not a boundary wall and one otherwise; we do not provide details here. This allows to prove, in a similar way to the Lemma just stated, the following slightly more general result:
% 
% \begin{lem} \label{sepPreserved}
% Let $f$ be a quasi-isometry between universal covers $\wtde{M_1}$ and $\wtde{M_2}$ of cusp decomposable manifolds $M_1$ and $M_2$ and let $\barr{f}$ be the bijection induced on the sets of their walls given by Corollary \ref{wallsBij}. Then $\barr{f}$ preserves separation, i.e. for any triple $W$, $W_1$, $W_2$ of distinct walls in $\wtde{M_1}$ the wall $\barr{f} \tonde{W}$ separates $\barr{f} \tonde{W_1}$ and $\barr{f} \tonde{W_2}$ if and only if $W$ separates $W_1$ and $W_2$.
% \end{lem}
% 
% \begin{proof}
% If $W$ separates $W_1$ and $W_2$ every continuous path with one endpoint on $W_1$ and another on $W_2$ must intersect $W$. If $W$ does not separate $W_1$ and $W_2$ by the above remark given a positive real number $D$ it is possible to construct a continuous path between a point in $W_1$ and a point in $W_2$ that stays always at a distance at least $D$ from $W$. The conclusion follows as in Lemma \ref{wallsInSameChamberRemainInSameChamber}.
% 
% It is also possible to prove this Lemma without using the remark as a consequence of Corollary \ref{treeIso} which we will be stated later.
% \end{proof}

Finally we are ready to study the behaviour of chambers under quasi-isometries.

\begin{teo} \label{chambersQuasiPreserved}
Let $M_1$ and $M_2$ be two cusp-decomposable manifolds and $f$ be a $\tonde{K,c}$-quasi-isometry between their universal covers $\wtde{M_1}$ and $\wtde{M_2}$. Then the image via $f$ of a chamber of $\wtde{M_1}$ is at a finite Hausdorff distance from a unique chamber in $\wtde{M_2}$.
\end{teo}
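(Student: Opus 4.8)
The plan is to derive the chamber statement from three facts already established: the bijection $\barr{f}$ between walls of $\wtde{M_1}$ and walls of $\wtde{M_2}$ (Corollary \ref{wallsBij}); the statement of Lemma \ref{wallsInSameChamberRemainInSameChamber} that $\barr{f}$ carries the set of walls adjacent to a chamber onto the set of walls adjacent to a chamber; and the remark, recorded just before the corollary stating that distinct chambers diverge and itself a consequence of compactness of the pieces, that every chamber lies at a finite Hausdorff distance from the union of the walls adjacent to it. So, fix a chamber $C$ of $\wtde{M_1}$ and let $\Omega$ be the collection of walls adjacent to it; note that $\Omega$ is infinite, since $C$ carries one thin wall for each coset of a cusp subgroup in the fundamental group of the underlying piece and each wall touches $C$ in at most two of them. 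By Lemma \ref{wallsInSameChamberRemainInSameChamber} there is a chamber $C'$ of $\wtde{M_2}$ whose adjacent walls are exactly the walls in $\barr{f}\tonde{\Omega}$, and I claim that $f\tonde{C}$ lies at finite Hausdorff distance from $C'$ and from no other chamber.

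For the Hausdorff-distance bound I would chain four estimates. First, $C$ lies within Hausdorff distance $R_1$ of $\bigcup_{W\in\Omega}W$ for some $R_1$: on one side the diameter of the compact piece underlying $C$ bounds the distance from any point of $C$ to the orbit, under the stabilizer of $C$, of a point on a thin wall; on the other side every point of a wall lies within $3$ of its thin wall, which sits inside $C$. Applying the $\tonde{K,c}$-quasi-isometry $f$ makes $f\tonde{C}$ lie within Hausdorff distance $KR_1+c$ of $f\tonde{\bigcup_{W\in\Omega}W}=\bigcup_{W\in\Omega}f\tonde{W}$. Next comes the only step that is more than bookkeeping: I would invoke the uniformity in Lemma \ref{wallsGoNearWalls}. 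Composing the bi-Lipschitz identification of a thin wall with $\tonde{\G\tonde{M_1},d_R}$ (Lemma \ref{biLipWalls}), the $Q$-bi-Lipschitz embedding of a wall into $\wtde{M_1}$ (Lemma \ref{wallsBiLipEmb}) and $f$ exhibits $\G\tonde{M_1}$ as quasi-isometrically embedded into $\wtde{M_2}$ with constants independent of the wall, so Lemma \ref{wallsGoNearWalls} together with Corollary \ref{wallsBij} yields a single constant $B$ with the Hausdorff distance between $f\tonde{W}$ and $\barr{f}\tonde{W}$ at most $B$ for \emph{every} wall $W$. Hence $\bigcup_{W\in\Omega}f\tonde{W}$ and $\bigcup_{W\in\Omega}\barr{f}\tonde{W}$ lie within Hausdorff distance $B$ of each other --- and it is precisely because $\Omega$ is infinite that uniformity of $B$, not merely finiteness of the distance for each individual $W$, is what makes this step legitimate. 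Finally, by compactness of the pieces again, $\bigcup_{W\in\Omega}\barr{f}\tonde{W}$, being the union of all walls adjacent to $C'$, lies within Hausdorff distance $R_2$ of $C'$ for some $R_2$. Composing the four estimates, $f\tonde{C}$ lies within Hausdorff distance $KR_1+c+B+R_2$ of $C'$.

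Uniqueness then follows immediately: if $f\tonde{C}$ were at finite Hausdorff distance from two distinct chambers of $\wtde{M_2}$, those chambers would be at finite Hausdorff distance from each other, contradicting the corollary that distinct chambers of $\wtde{M_2}$ lie at infinite Hausdorff distance. I do not expect a serious obstacle here: the substantive work has already been carried out in Lemma \ref{wallsInSameChamberRemainInSameChamber} and in the uniform form of Lemma \ref{wallsGoNearWalls}; the residual difficulty is simply the careful management of the Hausdorff-distance chain above, keeping both inclusions in view at every step.
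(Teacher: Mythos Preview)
Your argument is essentially the paper's own, and your Hausdorff-distance chain is a more explicit version of what the paper compresses into the single remark that ``a chamber lies at a finite Hausdorff distance from the union of its walls''; your emphasis on the \emph{uniformity} of the constant $B$ across all walls is a point the paper leaves implicit.

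The one place you move faster than the paper is in asserting that the walls adjacent to $C'$ are \emph{exactly} $\barr{f}\tonde{\Omega}$. The proof of Lemma~\ref{wallsInSameChamberRemainInSameChamber} only establishes the inclusion $\barr{f}\tonde{\Omega}\subseteq\graffe{\text{walls adjacent to }C'}$, via Lemma~\ref{wallsOfSameChamber}; it does not show that every wall adjacent to $C'$ lies in $\barr{f}\tonde{\Omega}$. The paper fills this in inside the proof of the theorem itself: apply the same lemma to a quasi-inverse of $f$ to send the walls adjacent to $C'$ into the walls adjacent to some chamber $C''$ of $\wtde{M_1}$, observe that then $\Omega\subseteq\graffe{\text{walls adjacent to }C''}$, and conclude $C=C''$. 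Without this step your fourth estimate --- that $\bigcup_{W\in\Omega}\barr{f}\tonde{W}$ is within $R_2$ of $C'$ --- holds in only one direction, since a priori $C'$ could have further adjacent walls not reached by $\barr{f}\tonde{\Omega}$. The fix is exactly the quasi-inverse argument just described, after which your chain goes through as written.
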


\begin{proof}
Let $C$ be a chamber in $\wtde{M_1}$ and let $\barr{f}$ be the bijection between the walls of the universal covers guaranteed by Corollary \ref{wallsBij}. Lemma \ref{wallsInSameChamberRemainInSameChamber} implies that the set of walls adjacent to $C$ is mapped by $\barr{f}$ to the set of walls adjacent to some chamber $C'$ of $\wtde{M_2}$. We claim that all walls of $C'$ are reached. Indeed, by applying the same argument to a quasi-inverse of $f$ any wall adjacent to $C'$ should be taken back to a wall adjacent to some chamber $C''$ in $\wtde{M_1}$. Now necessarily $C = C''$ because different chambers have infinite Hausdorff distance. The fact that $f \tonde{C}$ lies at a finite Hausdorff distance from $C'$ follows from the observation that a chamber lies at a finite Hausdorff distance from the union of its walls. There are no other chambers in $\wtde{M}$ lying at a finite Hausdorff distance from $f \tonde{C}$ because different chambers are at infinite Hausdorff distance from each other.
\end{proof}

Similarly to Corollary \ref{wallsBij}, we get the following

\begin{cor}\label{PresChambers}
There is a bijection between chambers in $\wtde{M_1}$ and $\wtde{M_2}$ that takes a chamber $C$ in $\wtde{M_1}$ to the unique chamber in $\wtde{M_2}$ such that $f \tonde{C}$ is at a finite Hausdorff distance from it.
\end{cor}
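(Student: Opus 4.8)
The plan is to define the map on chambers directly from Theorem \ref{chambersQuasiPreserved} and then verify it is a bijection by exploiting the fact, established just after Corollary \ref{wallsDiverge}, that two distinct chambers always lie at infinite Hausdorff distance from each other.

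First I would record that, by Theorem \ref{chambersQuasiPreserved}, the image $f\tonde{C}$ of any chamber $C$ of $\wtde{M_1}$ lies at finite Hausdorff distance from a unique chamber $C'$ of $\wtde{M_2}$; setting $C \mapsto C'$ then gives a well-defined map $F$ from chambers of $\wtde{M_1}$ to chambers of $\wtde{M_2}$. Next I would check injectivity: if $F\tonde{C_1} = F\tonde{C_2} = C'$, then both $f\tonde{C_1}$ and $f\tonde{C_2}$ lie at finite Hausdorff distance from $C'$, hence at finite Hausdorff distance from each other. Applying a quasi-isometric quasi-inverse $g$ of $f$, which distorts Hausdorff distances only by the multiplicative and additive quasi-isometry constants, we get that $C_1$ and $C_2$ are at finite Hausdorff distance; since distinct chambers of $\wtde{M_1}$ are at infinite Hausdorff distance, $C_1 = C_2$.

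For surjectivity I would apply Theorem \ref{chambersQuasiPreserved} to a quasi-inverse $g$ of $f$: given any chamber $C'$ of $\wtde{M_2}$, the image $g\tonde{C'}$ lies at finite Hausdorff distance from a unique chamber $C$ of $\wtde{M_1}$. Then $f\tonde{C}$ is at finite Hausdorff distance from $f\tonde{g\tonde{C'}}$, which in turn is at uniformly bounded Hausdorff distance from $C'$ because $g$ is a quasi-inverse of $f$; hence $f\tonde{C}$ is at finite Hausdorff distance from $C'$, so $F\tonde{C} = C'$. Together with injectivity this gives that $F$ is the desired bijection. There is no genuine obstacle here: the statement is a formal consequence of Theorem \ref{chambersQuasiPreserved} together with the dichotomy ``finite or infinite Hausdorff distance'' for pairs of chambers, so the only care needed is the routine bookkeeping that quasi-isometries and their quasi-inverses carry finite Hausdorff distances to finite Hausdorff distances.
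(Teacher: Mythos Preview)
Your argument is correct and is exactly the approach the paper intends: the corollary is stated immediately after Theorem \ref{chambersQuasiPreserved} with the remark ``Similarly to Corollary \ref{wallsBij}'', meaning one defines the map via that theorem and obtains injectivity and surjectivity from the infinite Hausdorff distance between distinct chambers together with the same argument applied to a quasi-inverse. Your write-up simply spells out what the paper leaves implicit.
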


Also, the proof of Lemma \ref{chambersQuasiPreserved} implies that the images of a chamber and of a wall adjacent to it via the bijections induced by a quasi-isometry form an adjacent pair of chamber and wall. This implies immediately the following corollaries.

\begin{cor}
The bijection induced by a quasi-isometry between walls in $\wtde{M_1}$ and walls in $\wtde{M_2}$ takes boundary walls to boundary walls.
\end{cor}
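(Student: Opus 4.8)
The plan is to find an intrinsic characterization of boundary walls in terms of the chamber--wall incidence of $\wtde{M}$, and then to use that this incidence is carried over by the bijections on walls and on chambers induced by $f$.

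First I would prove that \emph{a wall $W$ of $\wtde{M}$ is a boundary wall if and only if it is adjacent to exactly one chamber}. If $W$ is a boundary wall, it is a connected component of the preimage of a collar $N \times [0,3]$ that was not glued to any other collar; since this collar deformation retracts onto $N \times \{0\}$ and the restriction of $\wtde{M} \to M$ to $W$ is a covering of $N \times [0,3]$, the preimage of $N \times \{0\}$ in $W$ is connected. Thus $W$ contains a single thin wall, and a thin wall lies in the boundary of exactly one chamber, so $W$ is adjacent to exactly one chamber. Conversely, if $W$ is not a boundary wall then it is a component of the preimage of a \emph{glued} collar, hence corresponds to an edge of the Bass--Serre tree of Definition \ref{BassSerreTree}; an edge of a tree joins two distinct vertices, so $W$ is adjacent to two distinct chambers. (Equivalently, by Lemma \ref{FundGroupGraph} the group $\pi_1 \tonde{M}$ is built from the fundamental groups of the pieces by iterated amalgamated products and HNN extensions over the cusp subgroups, and crossing $W$ applies the corresponding amalgam identification or stable letter, which carries the chamber on one side of $W$ off itself.)

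Next I would record that the incidence is transported by $f$. Write $\barr{f}$ for the bijection on walls from Corollary \ref{wallsBij} and $\Phi$ for the bijection on chambers from Corollary \ref{PresChambers}. As observed after Corollary \ref{PresChambers}, if a wall $W$ of $\wtde{M_1}$ is adjacent to a chamber $C$, then $\barr{f} \tonde{W}$ is adjacent to $\Phi \tonde{C}$. Applying the same to a quasi-inverse of $f$ --- whose induced bijections on walls and on chambers are $\barr{f}^{-1}$ and $\Phi^{-1}$ by the uniqueness clauses of Corollaries \ref{wallsBij} and \ref{PresChambers} --- one gets that $C$ is adjacent to $W$ if and only if $\Phi \tonde{C}$ is adjacent to $\barr{f} \tonde{W}$. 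In particular $\Phi$ restricts, for every wall $W$ of $\wtde{M_1}$, to a bijection between the set of chambers adjacent to $W$ and the set of chambers adjacent to $\barr{f} \tonde{W}$, so these two sets have the same cardinality.

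Putting the two steps together, $W$ is a boundary wall if and only if it is adjacent to exactly one chamber, if and only if $\barr{f} \tonde{W}$ is adjacent to exactly one chamber, if and only if $\barr{f} \tonde{W}$ is a boundary wall, which is the claim. The only genuinely geometric point is the first step, and within it the statement that a wall arising from a glued collar is adjacent to two \emph{distinct} chambers of the universal cover; this is exactly where one uses that the Bass--Serre tree of $M$ is a tree (equivalently, the amalgam/HNN structure of $\pi_1 \tonde{M}$ coming from Lemma \ref{FundGroupGraph}).
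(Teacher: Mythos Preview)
Your proposal is correct and follows the same line as the paper: the paper simply notes, immediately before this corollary, that ``the images of a chamber and of a wall adjacent to it via the bijections induced by a quasi-isometry form an adjacent pair of chamber and wall'', and states the corollary as an immediate consequence. Your argument makes explicit what the paper leaves implicit, namely the characterization ``boundary wall $\Leftrightarrow$ adjacent to exactly one chamber'' and the fact that the induced bijections on walls and chambers intertwine the adjacency relation (hence preserve the number of adjacent chambers); there is no substantive difference in strategy.
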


\begin{cor} \label{treeIso}
The bijections induced by a quasi-isometry on vertices and edges of the Bass-Serre trees of $\wtde{M_1}$ and $\wtde{M_2}$ define a tree isomorphism.
\end{cor}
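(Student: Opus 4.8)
The plan is to read the corollary off the structural bijections already established. By Corollary~\ref{PresChambers} the quasi-isometry $f$ induces a bijection between the chambers of $\wtde{M_1}$ and those of $\wtde{M_2}$, i.e.\ between the vertices of the two Bass-Serre trees, and by Corollary~\ref{wallsBij} it induces a bijection $\barr{f}$ between their walls, which by the previous corollary carries boundary walls to boundary walls and hence restricts to a bijection between the non-boundary walls --- the edges of the two trees (Definition~\ref{BassSerreTree}). So what remains is to verify that the chamber-bijection and the (restricted) wall-bijection are compatible with the adjacency relation ``a wall touches a chamber'', in both directions.

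First I would fix a quasi-inverse $g$ of $f$ and check that the chamber- and wall-bijections induced by $g$ are inverse to those induced by $f$. This is a short computation with Hausdorff distances: $g\circ f$ and $f\circ g$ lie at bounded distance from the respective identity maps, a quasi-isometry carries subsets at finite Hausdorff distance to subsets at finite Hausdorff distance, and distinct walls --- and likewise distinct chambers --- lie at infinite Hausdorff distance by Corollary~\ref{wallsDiverge} and its corollary; chasing these facts through the defining property of the bijections gives $\barr{g}\,\barr{f}=\mathrm{id}$ and $\barr{f}\,\barr{g}=\mathrm{id}$ on walls, and the analogous identities on chambers.

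Next I would invoke the incidence statement recorded immediately after Theorem~\ref{chambersQuasiPreserved}, extracted from its proof: the $\barr{f}$-image of the family of walls adjacent to a chamber $C$ is exactly the family of walls adjacent to the chamber that corresponds to $C$. Applying this to $g$ as well, and using the first step, one obtains that a wall $W$ touches a chamber $C$ in $\wtde{M_1}$ if and only if $\barr{f}\tonde{W}$ touches the image of $C$ in $\wtde{M_2}$; that is, adjacency is detected in both directions. Consequently two chambers are joined by an edge (equivalently, share a wall) precisely when their images share the $\barr{f}$-image of that wall, so the chamber-bijection together with $\barr{f}$ restricted to non-boundary walls is an isomorphism of the underlying graphs --- and a graph isomorphism between trees is a tree isomorphism. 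The only step with any real content is the ``only if'' half of the adjacency equivalence, for which the passage through the quasi-inverse in the first step is essential; everything else is bookkeeping.
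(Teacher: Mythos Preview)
Your proposal is correct and follows essentially the same route as the paper: the paper records, just before this corollary, that the proof of Theorem~\ref{chambersQuasiPreserved} shows adjacency of chamber and wall is preserved by the induced bijections, and then declares the corollary immediate. Your write-up simply makes explicit the quasi-inverse bookkeeping (which the paper already carried out inside the proof of Theorem~\ref{chambersQuasiPreserved} when showing ``all walls of $C'$ are reached''), so the content is the same.
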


\begin{cor} \label{sepPreserved}
The bijection $\barr{f}$ induced by a quasi-isometry $f$ between walls in $\wtde{M_1}$ and walls in $\wtde{M_2}$ preserves separation, i.e. for any triple $W$, $W_1$, $W_2$ of distinct walls in $\wtde{M_1}$ the wall $\barr{f} \tonde{W}$ separates $\barr{f} \tonde{W_1}$ and $\barr{f} \tonde{W_2}$ if and only if $W$ separates $W_1$ and $W_2$.
\end{cor}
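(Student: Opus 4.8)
The plan is to reduce the statement entirely to the combinatorics of the Bass–Serre tree and invoke Corollary \ref{treeIso}. By the definition of separation given above, for a triple of distinct walls $W$, $W_1$, $W_2$ of $\wtde{M_1}$ with $W$, $W_1$, $W_2$ all non-boundary walls, ``$W$ separates $W_1$ and $W_2$'' means exactly that the edges of the Bass–Serre tree $T_1$ of $\wtde{M_1}$ associated to $W_1$ and $W_2$ lie in different connected components of $T_1$ with the edge associated to $W$ removed. By Corollary \ref{treeIso} the bijections induced by $f$ on vertices and edges of $T_1$ and $T_2$ assemble into a tree isomorphism $\Phi \colon T_1 \to T_2$ whose restriction to edges is precisely $\barr{f}$ on non-boundary walls.

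First I would record the elementary fact that a graph isomorphism between trees preserves edge-separation: removing an edge $e$ from a tree produces exactly two connected components, and an isomorphism $\Phi$ carries these to the two connected components of $T_2 \setminus \Phi\tonde{e}$; hence for any further edges $e_1$, $e_2$, the edge $e$ leaves $e_1$ and $e_2$ in distinct components of $T_1 \setminus e$ if and only if $\Phi\tonde{e}$ leaves $\Phi\tonde{e_1}$ and $\Phi\tonde{e_2}$ in distinct components of $T_2 \setminus \Phi\tonde{e}$. Applying this with $e = e_W$ and $e_i = e_{W_i}$ settles the corollary whenever $W$, $W_1$, $W_2$ are non-boundary walls.

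It then remains to dispose of the cases involving boundary walls. If $W$ is a boundary wall, then $W$ is a collar of a boundary component of $\wtde{M_1}$, so $\wtde{M_1} \setminus W$ is connected and $W$ separates no pair of walls; since $\barr{f}$ sends boundary walls to boundary walls, $\barr{f}\tonde{W}$ likewise separates nothing, and both sides of the claimed equivalence are vacuously false. If instead $W_1$ (say) is a boundary wall, I would use that $W_1$ is adjacent to a unique chamber $C_1$ and that $W$ separates $W_1$ from $W_2$ if and only if $W$ separates the chamber $C_1$ from $W_2$, i.e. if and only if the vertex of $T_1$ corresponding to $C_1$ and the edge (or, if $W_2$ is also a boundary wall, vertex) corresponding to $W_2$ lie in different components of $T_1$ minus the edge of $W$; since $\Phi$ is a tree isomorphism which, by the corollaries preceding this one, carries $C_1$ to the chamber adjacent to $\barr{f}\tonde{W_1}$ and respects adjacency of chambers and walls, the conclusion transfers verbatim to $\wtde{M_2}$.

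There is nothing deep here: the whole statement is a formal consequence of Corollary \ref{treeIso}. The only point requiring care — and the one I would expect to be the main (minor) obstacle — is the bookkeeping for boundary walls, namely making the ``edges of the Bass–Serre tree'' description of separation consistent for walls adjacent to a single chamber and reconciling it with the already-established facts that $\barr{f}$ preserves chamber–wall adjacency and sends boundary walls to boundary walls. (A path-theoretic proof modelled on Lemma \ref{wallsInSameChamberRemainInSameChamber}, using the characterization ``every path from $W_1$ to $W_2$ meets $W$'' together with quasi-isometry estimates, is also available but strictly more laborious, so I would not pursue it.)
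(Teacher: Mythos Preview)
Your approach is exactly the paper's: the corollary is stated there as an immediate consequence of Corollary \ref{treeIso}, with no separate proof given. Your additional bookkeeping for boundary walls is correct and a reasonable elaboration of what the paper leaves implicit, but it does not constitute a different route.
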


We restate these results in terms of groups.

\begin{teo} \label{HausGroupPieces}
Consider two cusp-decomposable manifolds $M_1$ and $M_2$ and let $f \colon \pi_1 \tonde{M_1} \to \pi_1 \tonde{M_2}$ be a quasi-isometry. Let $P$ be a piece in $M_1$, and $\pi_1 \tonde{P}$ the image of its fundamental group in $\pi_1 \tonde{M_1}$. Then $f \tonde{\pi_1 \tonde{P}}$ is at a finite Hausdorff distance from a conjugate of the fundamental group of a piece in $\pi_1 \tonde{M_2}$.
\end{teo}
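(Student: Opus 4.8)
The plan is to transport the statement to the universal covers, where Theorem~\ref{chambersQuasiPreserved} does all the work, and then transport back. Fix basepoints $x_0 \in \wtde{M_1}$, $y_0 \in \wtde{M_2}$ and let $\Phi_1 \colon \pi_1(M_1) \to \wtde{M_1}$ and $\Phi_2 \colon \pi_1(M_2) \to \wtde{M_2}$ be the orbit maps $g \mapsto g \cdot x_0$, $h \mapsto h \cdot y_0$, which are quasi-isometries by Milnor-\v{S}varc; pick quasi-inverses $\barr{\Phi_1}$, $\barr{\Phi_2}$. Then $F := \Phi_2 \circ f \circ \barr{\Phi_1}$ is a quasi-isometry $\wtde{M_1} \to \wtde{M_2}$, and $\Phi_2 \circ f$ is at bounded distance from $F \circ \Phi_1$ (because $\barr{\Phi_1}\circ\Phi_1$ is at bounded distance from the identity and $\Phi_2\circ f$ is coarsely Lipschitz). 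Let $C$ be the chamber of $\wtde{M_1}$ whose stabilizer under the deck action is exactly the given image $\pi_1(P) \subseteq \pi_1(M_1)$: since $\pi_1(P) \hookrightarrow \pi_1(M_1)$ is injective, the connected components of the preimage of $P$ in $\wtde{M_1}$ are copies of $\wtde{P}$ on which $\pi_1(M_1)$ acts transitively, and $C$ is the component fixed by $\pi_1(P)$.

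The first step is the ``group $=$ chamber'' dictionary. Choosing $x_0 \in C$, I claim $\Phi_1(\pi_1(P)) = \pi_1(P)\cdot x_0$ is at finite Hausdorff distance from $C$ inside $\wtde{M_1}$. Indeed, $C$ with its intrinsic path metric $d_C$ is the universal cover of the compact piece $P$, so by Milnor-\v{S}varc the orbit of $\Stab(C)=\pi_1(P)$ through $x_0$ is cobounded in $(C,d_C)$; by Lemma~\ref{chambersBiLipEmb} the metrics $d_C$ and the restriction of $d_{\wtde{M_1}}$ to $C$ are bi-Lipschitz equivalent, so this orbit is cobounded in $C$ for the ambient metric as well, whence the claim. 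The analogous fact holds for any chamber of $\wtde{M_2}$: for a chamber $C'$, the orbit $\Stab(C')\cdot y'$ of any $y'\in C'$ is cobounded in $C'$ by the same argument, and since deck transformations act by isometries the orbit $\Stab(C')\cdot y_0$ through our fixed basepoint $y_0$ is at bounded Hausdorff distance from the orbit through $y'$, hence at finite Hausdorff distance from $C'$; thus $\Phi_2(\Stab(C'))$ is at finite Hausdorff distance from $C'$ for every chamber $C'$ of $\wtde{M_2}$.

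Now the argument assembles. By Theorem~\ref{chambersQuasiPreserved}, $F(C)$ is at finite Hausdorff distance from a unique chamber $C'$ of $\wtde{M_2}$. Since a quasi-isometry carries subsets at finite Hausdorff distance to subsets at finite Hausdorff distance, and using the dictionary above together with the fact that $\barr{\Phi_2} \circ \Phi_2$ is at bounded distance from the identity, we obtain a chain (in which $\sim$ denotes finite Hausdorff distance in the relevant space)
\[
f(\pi_1(P)) \sim \barr{\Phi_2}\bigl(\Phi_2(f(\pi_1(P)))\bigr) \sim \barr{\Phi_2}\bigl(F(\Phi_1(\pi_1(P)))\bigr) \sim \barr{\Phi_2}\bigl(F(C)\bigr) \sim \barr{\Phi_2}(C') \sim \Stab(C').
\]
Finally, if $C'_0$ denotes the chamber of $\wtde{M_2}$ lying over a piece $P'$ of $M_2$ whose stabilizer is the chosen image $\pi_1(P') \subseteq \pi_1(M_2)$, then $C' = h \cdot C'_0$ for some $h \in \pi_1(M_2)$ by transitivity, so $\Stab(C') = h\,\pi_1(P')\,h^{-1}$ is a conjugate of a piece subgroup; together with the chain this proves the theorem. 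The ``moreover'' clause, that $\pi_1(P)$ and $\pi_1(P')$ are quasi-isometric, follows at once, since $F$ restricts to a quasi-isometric embedding of $C$ onto a cobounded subset of $C'$, while $\pi_1(P)$ and $\pi_1(P')$ are quasi-isometric to $C$ and $C'$ respectively by Milnor-\v{S}varc.

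The entire mathematical content sits in Theorem~\ref{chambersQuasiPreserved}; the only delicate point in the present reduction is the dictionary of the second paragraph --- specifically, that coboundedness of a deck orbit inside a chamber, immediate for the intrinsic path metric, persists for the ambient metric of $\wtde{M}$, which is exactly what Lemma~\ref{chambersBiLipEmb} supplies --- together with routine bookkeeping of basepoints in the Milnor-\v{S}varc quasi-isometries. I do not anticipate any obstacle beyond this.
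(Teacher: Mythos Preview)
Your proof is correct and follows essentially the same route as the paper: transport to universal covers via Milnor--\v{S}varc, invoke Theorem~\ref{chambersQuasiPreserved} to send the chamber $C$ near a chamber $C'$, and read off that $f(\pi_1(P))$ is at finite Hausdorff distance from $\Stab(C')=h\,\pi_1(P')\,h^{-1}$. The only cosmetic difference is that the paper phrases the last step via the left coset $g\pi_1(P_i)$ and the observation that $d(h,hg^{-1})$ is constant, whereas you go directly through $\Stab(C')$; your explicit appeal to Lemma~\ref{chambersBiLipEmb} for the ``orbit $\sim$ chamber'' dictionary is a welcome bit of extra care.
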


\begin{proof}
Let $P_{1}$, \dots, $P_{m}$ be the pieces of $M_2$. Take basepoints $x_1$ in $P$ and $x_2$ in $M_2$. The set of left translates of $x_1$ via $\pi_1 \tonde{P}$ is at a finite Hausdorff distance from a chamber $C$ in the universal cover $\wtde{M_1}$ of $M_1$. Theorem \ref{chambersQuasiPreserved} tells us that the quasi-isometry induced by $f$ between the universal covers $\wtde{M_1}$ and $\wtde{M_2}$ via the Milnor-\v{S}varc lemma takes $C$ at a finite Hausdorff distance from a chamber $C'$ in $\wtde{M_2}$. The chamber $C'$ is at a finite Hausdorff distance from the set of left translates of $x_2$ by some left coset $g\pi_1 \tonde{P_i}$ of the fundamental group of a piece in $M_2$. Consider some word metric on $\pi_1 \tonde{M_2}$ and note that for every $h$ in $\pi_1 \tonde{M_2}$ the distance between elements $h$ and $hg^{-1}$ is always equal to the distance of $g^{-1}$ from the identity. This means that $f \tonde{\pi_1 \tonde{P}}$ is at a finite Hausdorff distance from $g \pi_1 \tonde{P_i} g^{-1}$.
\end{proof}

\subsection{Quasi-isometric rigidity of groups}

We are finally going to explore the structure of groups that are quasi-isometric to the fundamental group of a cusp-decomposable manifold. Let $M$ be a cusp-decomposable manifold and let $\Gamma$ be a finitely generated group quasi-isometric to its universal cover $\wtde{M}$. By Lemma \ref{QItoQIaction}, we have a cobounded quasi-action of $\Gamma$ on $\wtde{M}$. By Corollary \ref{treeIso}, this action induces an action by automorphisms on the Bass-Serre $T$ of the decomposition in chambers of $\wtde{M}$. We would like to apply the fundamental theorem of the Bass-Serre theory to $\Gamma$. In order to do this, the action of $\Gamma$ on the tree must be without edge inversions.

\begin{lem} \label{noEdgeInv}
Either the group $\Gamma$ itself acts on $T$ without edge inversions, or it has an index two subgroup that acts without edge inversions.
\end{lem}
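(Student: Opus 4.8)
The plan is to use the standard trick for converting an action with edge inversions into one without: pass to the barycentric subdivision, or equivalently observe that an action with inversions can be repaired at the cost of an index two subgroup. Concretely, I would argue as follows. The group $\Gamma$ acts on the Bass-Serre tree $T$ by automorphisms via Corollary \ref{treeIso}. Consider the action of $\Gamma$ on the set of edges of $T$; an element performs an edge inversion if it swaps the two endpoints of some edge. I want to show that the set of elements that never invert any edge is a subgroup of index at most two.

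First I would set up a homomorphism $\Gamma \to \Z/2\Z$ detecting inversions. Since $T$ is a tree, it is bipartite: the vertex set splits canonically into two classes $V_0 \sqcup V_1$ according to the parity of the distance to a fixed basepoint vertex. Any automorphism of $T$ either preserves this bipartition or swaps the two classes. This gives a homomorphism $\sigma \colon \Gamma \to \Z/2\Z$, where $\sigma(g) = 0$ if $g$ preserves $\{V_0, V_1\}$ and $\sigma(g) = 1$ if $g$ swaps them. The key observation is that an edge inversion swaps the endpoints of an edge, hence necessarily swaps the two parity classes, so every element performing an inversion lies in $\sigma^{-1}(1)$. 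Conversely — and this is the step that needs a small argument — an element $g$ with $\sigma(g) = 0$ preserves the bipartition, so it sends each vertex to a vertex of the same class; since the two endpoints of any edge lie in different classes, $g$ cannot swap them, so $g$ performs no edge inversion. Therefore $\ker \sigma$ is precisely the set of elements acting without edge inversions, and it has index one or two in $\Gamma$ depending on whether $\sigma$ is trivial or surjective. If $\sigma$ is trivial, $\Gamma$ itself acts without inversions; otherwise $\ker \sigma$ is the desired index two subgroup.

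The main obstacle, such as it is, is making sure the bipartition is genuinely canonical and $\Gamma$-equivariant up to the $\Z/2\Z$ ambiguity: one must check that the parity classes do not depend on the choice of basepoint (changing the basepoint either fixes or swaps the two classes, so the partition $\{V_0, V_1\}$ as an unordered pair is well defined), and that the rule $g \mapsto \sigma(g)$ is actually a homomorphism (which follows from the fact that composing two parity-swapping maps preserves parity). Both are routine once the bipartite structure of the tree is invoked, so I expect no serious difficulty. An alternative, equally short route is to replace $T$ by its barycentric subdivision $T'$: the induced action of $\Gamma$ on $T'$ has no edge inversions because each edge of $T'$ has one ``old'' endpoint and one ``midpoint'' vertex, which can never be interchanged by an automorphism preserving the types of vertices; but then one should note that the Bass-Serre data extracted from $T'$ is slightly different, so for the purposes of the later argument the bipartition approach, which keeps $T$ itself, is cleaner.
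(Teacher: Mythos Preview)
Your proof is correct and follows essentially the same approach as the paper: both exploit the fact that a tree is bipartite, so the subgroup of $\Gamma$ preserving the two parity classes has index at most two and cannot invert any edge. Your write-up is simply a more detailed version of the paper's two-sentence argument.
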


\begin{proof}
A tree is a bipartite graph. If a group acts on a bipartite graph by automorphisms, then the subgroup that sends each subset of the partition into itself is of index at most two, and clearly cannot invert edges. 
\end{proof}

Therefore, up to passing to an index $2$ subgroup of $\Gamma$, which does not change the virtual isomorphism and quasi-isometry class, we may assume that the action of $\Gamma$ on $T$ is without edge inversions.

\begin{lem}
There is only a finite number of orbits of edges under the action of $\Gamma$ on $T$.
\end{lem}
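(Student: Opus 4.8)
The plan is to exhibit a radius $R$, depending only on $\wtde{M}$ and on the constants of the quasi-action, such that every wall of $\wtde{M}$ lies in the $\Gamma$-orbit of a wall meeting the closed ball of radius $R$ about a fixed basepoint; since edges of $T$ correspond to walls (Definition \ref{BassSerreTree}) and, by Corollary \ref{locFinWalls}, only finitely many walls meet a ball of finite radius, this yields finitely many orbits of edges.

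First I would fix the setup. Let $\rho$ be the cobounded quasi-action of $\Gamma$ on $\wtde{M}$ furnished by Lemma \ref{QItoQIaction}, with uniform quasi-isometry constants $K$, $c$, let $x_0 \in \wtde{M}$ be a basepoint, and let $D$ be such that $\graffe{\rho(g)(x_0) : g \in \Gamma}$ is $D$-dense. Applying the quasi-action axioms to $g^{-1} g = e$ gives a constant $E$, independent of $g$, with $d\tonde{x_0, \rho(g^{-1})\tonde{\rho(g)(x_0)}} \le E$. By Corollary \ref{wallsBij} each $\rho(g)$ induces a bijection $\barr{\rho(g)}$ of the set of walls of $\wtde{M}$; since two quasi-isometries at bounded distance induce the same bijection on walls (distinct walls being at infinite Hausdorff distance by Corollary \ref{wallsDiverge}), the assignment $g \mapsto \barr{\rho(g)}$ is a genuine $\Gamma$-action on walls, and by Corollary \ref{treeIso} it is the action of $\Gamma$ on the edges of $T$.

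Next, fix an arbitrary wall $W$ and a point $p$ in a thin wall $W_0 \subseteq W$. As in the proof of Corollary \ref{wallsBij}, $W_0$ is the image of a bi-Lipschitz embedding $\iota \colon \G \to \wtde{M}$, and its bi-Lipschitz constant may be taken uniform over all thin walls by Lemma \ref{finIsoClasses} and Lemma \ref{biLipWalls}. By coboundedness choose $g \in \Gamma$ with $d\tonde{\rho(g)(x_0), p} \le D$; applying the $\tonde{K,c}$-quasi-isometry $\rho(g^{-1})$ and using $E$ gives $d\tonde{x_0, \rho(g^{-1})(p)} \le KD + c + E$. The composition $\rho(g^{-1}) \circ \iota$ is a $\tonde{K',c'}$-quasi-isometric embedding of $\G$ into $\wtde{M}$ with $K'$, $c'$ depending only on $K$, $c$ and $\wtde{M}$, so Lemma \ref{wallsGoNearWalls} yields a constant $B$ (again depending only on $K$, $c$, $\wtde{M}$) and a wall lying within Hausdorff distance $B$ of $\rho(g^{-1})(W_0)$. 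That wall is $\barr{\rho(g^{-1})}(W) = \barr{\rho(g)}^{-1}(W)$, since $\rho(g^{-1})(W_0)$ also lies at finite Hausdorff distance from $\barr{\rho(g^{-1})}(W)$ by Corollary \ref{wallsBij}, and two walls at finite Hausdorff distance from the same set must coincide by Corollary \ref{wallsDiverge}. Hence $d\tonde{x_0, \barr{\rho(g)}^{-1}(W)} \le KD + c + E + B =: R$.

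Finally, put $R' = R + 1$. For every wall $W$ there is $g \in \Gamma$ with $\barr{\rho(g)}^{-1}(W)$ meeting the closed ball of radius $R'$ about $x_0$; by Corollary \ref{locFinWalls} only finitely many walls meet this ball, so every wall lies in the $\Gamma$-orbit of one of them. Therefore $\Gamma$ has finitely many orbits of walls, hence finitely many orbits of edges of $T$. The only delicate point is the uniformity of the constant $B$, which is precisely the content of Lemma \ref{wallsGoNearWalls}; the remaining steps are routine bookkeeping for the quasi-action.
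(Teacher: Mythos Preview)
Your proof is correct and follows essentially the same route as the paper's: use coboundedness of the quasi-action to translate an arbitrary wall into a fixed ball around a basepoint, and then invoke local finiteness of walls (Corollary \ref{locFinWalls}). You are simply more explicit than the paper about the bookkeeping---the well-definedness of the induced action on walls, the constant $E$ coming from the quasi-action axioms, and the uniformity of $B$ via Lemma \ref{wallsGoNearWalls}---but the underlying argument is identical.
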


\begin{proof}
This fact is standard, see \cite[Lemma 10.3]{FLSgraph} for example, so we only give here an idea of the proof. Fix a basepoint $x_0$ in $\wtde{M}$ and consider a wall $W$. The quasi-action $\rho$ of $\Gamma$ on $\wtde{M}$ is cobounded, so there is a point $\rho \tonde{g} \tonde{x_0}$ in the orbit of $x_0$, where $g$ is an element of $\Gamma$, at a distance from $W$ bounded by some constant depending only on the quasi-action. This means that the wall at a finite Hausdorff distance from $\rho \tonde{g^{-1}} \tonde{W}$ is at a bounded distance from $x_0$. By Corollary \ref{locFinWalls} the walls at a distance smaller than a fixed positive real number from $x_0$ are finite, so we have a finite number of representatives for the orbits of the action on walls.
\end{proof}

\begin{cor}
The quotient of $T$ under the action of $\Gamma$ is a finite graph.
\end{cor}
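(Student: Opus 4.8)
The plan is to read this off directly from the preceding lemma. First I would record the general fact that the quotient of a graph by a group acting without edge inversions is again a graph: its vertex set is the set of $\Gamma$-orbits of vertices of $T$, its edge set is the set of $\Gamma$-orbits of edges, and the two endpoint maps descend to the quotient because $\Gamma$ carries the endpoints of an edge to the endpoints of its image. Since we have already arranged, after possibly replacing $\Gamma$ by an index $2$ subgroup, that the action of $\Gamma$ on $T$ is without edge inversions, the quotient $T/\Gamma$ is indeed a graph, and it remains only to check that it has finitely many vertices and finitely many edges.

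Finiteness of the edge set of $T/\Gamma$ is exactly the content of the previous lemma. For the vertices I would use that $T$ is a connected tree with at least one edge: there is at least one edge because $M$ has at least one gluing, hence $\wtde{M}$ has at least one wall, hence $T$ has at least one edge by Definition \ref{BassSerreTree}. In a connected graph with at least one edge every vertex is an endpoint of some edge, so every $\Gamma$-orbit of vertices contains an endpoint of some edge. Since each of the finitely many $\Gamma$-orbits of edges contributes at most two vertex orbits, namely the orbits of its two endpoints, the number of vertex orbits is at most twice the number of edge orbits, and in particular finite.

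Combining the two bounds, $T/\Gamma$ has finitely many vertices and finitely many edges, so it is a finite graph. The only point requiring a moment's care is the observation that $T$ has no isolated vertices, which is automatic from connectedness together with the standing assumption that there is at least one gluing; there is otherwise no real obstacle, the statement being an immediate bookkeeping consequence of the previous lemma.
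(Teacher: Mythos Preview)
Your argument is correct and is precisely the intended deduction: the paper states this corollary without proof, treating it as an immediate consequence of the preceding lemma on finiteness of edge orbits, and you have simply spelled out the one nontrivial detail, namely that finiteness of vertex orbits follows from finiteness of edge orbits because $T$ is connected with at least one edge and hence has no isolated vertices.
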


We then describe the structure of edge and vertex stabilizers under the action of $\Gamma$ on $T$.

\begin{lem} \label{geoStructStab}
Every edge stabilizer is a finitely generated group quasi-isometric to $\G \tonde{M}$ endowed with a left invariant Riemannian metric. Every vertex stabilizer is a finitely generated group quasi-isometric to the chamber corresponding to the vertex with the induced Riemannian metric.
\end{lem}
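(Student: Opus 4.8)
The plan is to restrict the cobounded quasi-action $\rho$ of $\Gamma$ on $\wtde{M}$ (provided by Lemma \ref{QItoQIaction}) to the stabilizer of an edge, respectively a vertex, to show that this restriction is a \emph{proper cobounded} quasi-action on the corresponding wall, respectively chamber, and then to invoke the quasi-action form of the Milnor--\v{S}varc Lemma: a proper cobounded quasi-action of a group on a geodesic metric space forces the group to be finitely generated and quasi-isometric to that space (see \cite{DKlecGGT}). For an edge $e$ corresponding to a wall $W$ we then combine this with Lemma \ref{biLipWalls}, which identifies $W$ up to quasi-isometry with $\G\tonde{M}$ endowed with a left-invariant Riemannian metric; for a vertex $v$ corresponding to a chamber $C_v$ the target is $C_v$ with its induced Riemannian metric (which, by Lemma \ref{chambersBiLipEmb}, is bi-Lipschitz to the restriction of $d_{\wtde{M}}$, and which is geodesic being a complete Riemannian manifold).

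To set up the restricted quasi-action, fix a basepoint $x_0 \in W$. If $g \in \Gamma_e$, then by Corollary \ref{treeIso} the automorphism of $T$ induced by $\rho(g)$ fixes $e$, so by Corollary \ref{wallsBij} the unique wall at finite Hausdorff distance from $\rho(g)(W)$ is $W$ itself; applying Lemma \ref{wallsGoNearWalls} to a thin wall of $W$ (isometric to $\G\tonde{M}$, and by Lemma \ref{wallsBiLipEmb} bi-Lipschitz embedded in $\wtde{M}$) shows this Hausdorff distance is bounded by a constant $B_0$ depending only on $K$, $c$ and $\wtde{M}$. Composing $\rho(g)|_W$ with a coarse closest-point projection $\mathcal{N}_{B_0}(W) \to W$ therefore yields a quasi-action $\rho_e$ of $\Gamma_e$ on $(W, d_W)$ with uniform constants, where we pass freely between $d_W$ and the restriction of $d_{\wtde{M}}$ using Lemma \ref{wallsBiLipEmb}. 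It is proper because $\rho$ is: from the construction in Lemma \ref{QItoQIaction}, the orbit map $g \mapsto \rho(g)(x_0)$ is coarsely the Milnor--\v{S}varc orbit map of the defining quasi-isometry, so bounded orbits involve only finitely many group elements. The identical construction with $C_v$, Corollary \ref{PresChambers} and Lemma \ref{chambersBiLipEmb} produces a proper quasi-action of $\Gamma_v$ on $C_v$.

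The heart of the matter — and the step I expect to be the main obstacle — is \textbf{coboundedness} of $\rho_e$, and of its vertex analogue. I would argue by contradiction. Suppose there are points $p_i \in W$ with $d_{\wtde{M}}(p_i, \{\rho(g)(x_0) : g \in \Gamma_e\}) \to \infty$. By coboundedness of $\rho$, pick $g_i \in \Gamma$ with $d_{\wtde{M}}(\rho(g_i)(x_0), p_i) \le D$. Applying the quasi-action element $\rho(g_i^{-1})$ moves $p_i$ to within a fixed distance of $x_0$, and since $p_i \in W$ the image lies within a fixed distance of the wall $\barr{\rho}(g_i)^{-1}(W)$; hence this wall meets a fixed ball about the \emph{fixed} basepoint $x_0$, so by local finiteness of walls (Corollary \ref{locFinWalls}) it belongs to a fixed finite set $\{V_1, \dots, V_m\}$ independent of $i$. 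For each $V_j$ that occurs, choose $h_j \in \Gamma$ with $\barr{\rho}(h_j)(V_j) = W$ (some $g_i$ does the job). Since $\Gamma$ acts honestly on $T$ by Corollary \ref{treeIso}, the element $\gamma_i := g_i h_{j(i)}^{-1}$ fixes $e$, i.e.\ $\gamma_i \in \Gamma_e$; and because $\rho$ is a quasi-action and only the finitely many elements $h_1, \dots, h_m$ are involved, $\rho(\gamma_i)(x_0)$ stays within a fixed distance of $\rho(g_i)(x_0)$, hence of $p_i$. This bounds $d_{\wtde{M}}(p_i, \{\rho(g)(x_0) : g \in \Gamma_e\})$, a contradiction. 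The vertex case is verbatim the same, with chambers and the statement ``only finitely many chambers meet a given ball'' (an immediate consequence of Corollary \ref{locFinWalls}) in place of walls; note that this argument needs no divergence statement for chambers — which in fact fails, since two adjacent chambers share an entire wall.

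With properness and coboundedness established, the quasi-action form of the Milnor--\v{S}varc Lemma finishes the proof: $\Gamma_e$ is finitely generated and quasi-isometric to $(W, d_W)$, hence to $\G\tonde{M}$ with a left-invariant Riemannian metric by Lemma \ref{biLipWalls}; and $\Gamma_v$ is finitely generated and quasi-isometric to the chamber $C_v$ with its induced Riemannian metric, as claimed.
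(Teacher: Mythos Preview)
Your proof is correct and essentially identical to the paper's: both restrict the quasi-action to the stabilizer, use coboundedness of the full $\Gamma$-action together with local finiteness of walls (Corollary \ref{locFinWalls}) and a finite set of ``correction'' elements to obtain coboundedness on the wall (resp.\ chamber), and then conclude via the quasi-action form of the Milnor--\v{S}varc Lemma. The only cosmetic differences are that you phrase coboundedness as a contradiction argument and make properness explicit, whereas the paper argues directly and refers to \cite[Lemma 1.4]{FLSgraph}.
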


\begin{proof}
This lemma is also standard, see \cite[Lemma 10.4]{FLSgraph} for example. We will give an idea of the proof for the walls; the one for chambers is analogous. Consider a wall $W$ and a basepoint $x_0$ in it. Denote by $\Gamma_W$ the stabilizer of the edge corresponding to $W$. By Lemma \ref{wallsGoNearWalls} we know that the orbit via $\Gamma_W$ of $x_0$ is contained in the $B$-neighbourhood of $W$ which is quasi-isometric to $W$ as a consequence of Lemma \ref{wallsBiLipEmb}. It remains to show that every point of this neighbourhood, and hence of $W$, is at a bounded distance from this orbit; this implies that there is a quasi-action of $\Gamma_W$ on $W$. The thesis then follows from a version of the Milnor-\v{S}varc lemma for quasi-actions \cite[Lemma 1.4]{FLSgraph}.

Let $x$ be a point of $W$. By hypothesis, there is a point $\rho \tonde{g} \tonde{x_0}$ of the orbit of $x_0$ via the whole $\Gamma$ at a bounded distance from $x$. Then the point $\rho \tonde{g^{-1}} \tonde{x}$ is at a bounded distance from $x_0$ and stays near a wall $\barr{W}$ in the orbit of $W$ via the action of $\Gamma$ on walls. As in the proof of the previous lemma, the wall $\barr{W}$ can be one among a finite number of walls $W_1$, \dots, $W_m$, chosen independently of $W$. Suppose $\barr{W} = W_i$. Let $g_1$, \dots, $g_m$ be elements of $\Gamma$ that bring $W$ in $W_1$, \dots, $W_m$ respectively via the action on walls. The points $\rho \tonde{g_1} \tonde{x_0}$, \dots, $\rho \tonde{g_m} \tonde{x_0}$ obviously stay at a bounded distance from $x_0$, and thus $gg_i$ is in $\Gamma_W$ and the point $\rho \tonde{gg_i} \tonde{x_0}$ is at a bounded distance from $x$.
\end{proof}

We now will prove that finitely generated groups quasi-isometric to the fundamental group of a cusp-decomposable manifold are orbifold fundamental groups of cusp-decomposable orbifolds. We first define these objects.

\begin{defin}
A \emph{cusp-decomposable orbifold} is obtained by gluing along affine homeomorphisms of some pairs of boundary components a finite number of pieces; each piece is obtained by a complete non-compact finite volume locally symmetric orbifold as in Definition \ref{lsOrb} by troncating its cusps.
\end{defin}

We will need the following

\begin{lem}
The orbifold fundamental group of a cusp-decomposable orbifold is the fundamental group of a finite graph of groups where each vertex corresponds to a piece, each edge corresponds to a pair of glued boundary components, every vertex group is the orbifold fundamental group of the piece corresponding to the vertex, every edge group is the orbifold fundamental group of a glued boundary component, and the maps between edge and vertex groups are the natural inclusions.
\end{lem}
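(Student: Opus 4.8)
The plan is to mimic the proof of Lemma \ref{FundGroupGraph} in the orbifold category, replacing the ordinary Seifert--van Kampen theorem by its orbifold counterpart. Concretely, I would present the cusp-decomposable orbifold $M'$ as the total space of a graph of orbifolds: let $\mathcal G$ be the graph with one vertex for each piece $P'_i$ of $M'$ and one edge for each pair of boundary components identified in the construction of $M'$ (a self-gluing of two boundary components of the same piece yields a loop), and for each edge $e$ between vertices $v_i,v_j$ (possibly $i=j$) let $N'_e$ denote the infra-nil orbifold underlying either of the two identified boundary components. Recall from the construction that each glued boundary component of a piece carries a collar orbifold-homeomorphic to $N'\times\quadre{0,3}$ and that the gluing identifies two such collars by $\tonde{n,a}\sim\tonde{\phi_e\tonde{n},3-a}$, so that the union of the two collars is a single region $U_e$ orbifold-homeomorphic to $N'_e\times\quadre{0,3}$, with gluing locus $N'_e\times\graffe{3/2}$. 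I would then take the open cover of $M'$ consisting of open thickenings $\widehat U_e$ of the $U_e$ together with the open sub-orbifolds $\widehat P'_i$ obtained from $P'_i$ by deleting the outer halves of the collars of its glued boundary components: then $\widehat U_e$ deformation retracts onto $N'_e\times\graffe{3/2}$, $\widehat P'_i$ deformation retracts onto an interior copy of $P'_i$, and every connected component of $\widehat P'_i\cap\widehat U_e$ is a collar of $N'_e$, hence homotopy equivalent to $N'_e$ (there being two such components exactly when $e$ is a loop at $v_i$). The combinatorics of this cover are recorded by $\mathcal G$.

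Applying the orbifold Seifert--van Kampen theorem to this cover --- all the orbifolds involved being good, as quotients of a symmetric space or of a nilpotent Lie group --- would identify $\pi_{1,orb}\tonde{M'}$ with the colimit over $\mathcal G$ of the diagram of groups $\pi_{1,orb}\tonde{\widehat P'_i}\longleftarrow\pi_{1,orb}\tonde{\widehat U_e}\longrightarrow\pi_{1,orb}\tonde{\widehat P'_j}$, with an HNN contribution for each loop; in other words, $\pi_{1,orb}\tonde{M'}$ is the fundamental group of the graph of groups on $\mathcal G$ whose vertex group at $v_i$ is $\pi_{1,orb}\tonde{P'_i}$, whose edge group at $e$ is $\pi_{1,orb}\tonde{N'_e}$, and whose edge-to-vertex maps are induced by the inclusions $N'_e\hookrightarrow P'_i$ and $N'_e\hookrightarrow P'_j$ (the second precomposed with the affine identification $\phi_e$). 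These maps are injective: this is exactly the injectivity of $\pi_{1,orb}\tonde{N'}\to\pi_{1,orb}\tonde{P'}$ recorded for orbifold pieces above. Hence the colimit is genuinely the fundamental group of a graph of groups in the sense of Bass--Serre, with the natural inclusions as edge maps, which is the assertion.

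The step that will require care is the invocation of the orbifold van Kampen theorem: one must verify that $M'$ and all the sub-orbifolds in the cover are connected and developable --- each boundary component is connected by construction, and pieces and their collars are good --- and, above all, that the orbifold structures agree across each gluing; the last point holds precisely because the identifications are affine homeomorphisms of the infra-nil boundary orbifolds, so the two orbifold atlases match along the gluing loci. An alternative, perhaps more in keeping with the rest of the paper, would be to build the orbifold universal cover of $M'$ directly as a tree of neutered spaces --- copies of the orbifold universal covers of the pieces glued along copies of $\G\tonde{M}\times\quadre{0,3}$ --- verify that its dual graph of chambers is a tree on which $\pi_{1,orb}\tonde{M'}$ acts without inversions and with the claimed vertex and edge stabilizers, and invoke the fundamental theorem of Bass--Serre theory; here too the single nontrivial ingredient is the injectivity of $\pi_{1,orb}\tonde{N'}\to\pi_{1,orb}\tonde{P'}$.
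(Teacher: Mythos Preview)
Your proposal is correct and follows exactly the approach the paper takes: the paper's proof is a two-line remark that the argument is the same as for Lemma \ref{FundGroupGraph}, the only change being the use of the van Kampen theorem for orbifold fundamental groups in place of the classical one. You have simply written out in detail what the paper leaves implicit.
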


\begin{proof}
The proof is the same as in the manifold case. The only difference is in using the Van Kampen theorem for orbifold fundamental groups instead of the classical one. %CITA_PD
\end{proof}

Now we are ready to prove the main theorem of this paper, which we restate in the most complete form.

\begin{teo} \label{main}
Let $\Gamma$ be a finitely generated group quasi-isometric to the universal cover of a cusp-decomposable manifold $M$. Denote by $Y$ the symmetric space to which pieces in $M$ are locally isometric. Then $\Gamma$ is virtually isomorphic to the fundamental group of a cusp-decomposable orbifold. More precisely, $\Gamma$ itself, or a subgroup of index $2$, has a finite normal subgroup $F$ which is the intersection of all the stabilizers of vertices and edges of the Bass-Serre tree $T$ of $M$ under the action of $\Gamma$ given by Lemma \ref{QItoQIaction} and Corollary \ref{treeIso}. The quotient $\Gamma/F$ is the orbifold fundamental group of an orbifold constructed by gluing pieces as follows:
\begin{enumerate}
\item The pieces are in bijection with the vertices of the quotient graph of $T$ by the action of $\Gamma$;
\item Let $v$ be a vertex of the quotient graph, and let $\wtde{v}$ be any of its preimages in $T$. Denote by $C_{\wtde{v}}$ the corresponding chamber in $\wtde{M}$ and by $\Gamma_{\wtde{v}}$ the stabilizer of $\wtde{v}$ by the action of $\Gamma$ on $T$. Then the piece corresponding to $v$ is isometric to the quotient of a suitable neutered space $\mathcal{N}_{\wtde{v}}$, whose boundary horospheres are in natural bijection with walls adjacent to $C_{\wtde{v}}$, by a properly discontinuous cocompact isometric action of a group isomorphic to $\Gamma_{\wtde{v}}/F$. Furthermore, this group shares a finite index subgroup with the fundamental group of the piece in $M$ covered by $C_{\wtde{v}}$.
\item The pieces are glued along affine homeomorphisms of boundary components. The glued pairs are in bijection with the edges of the quotient graph.
% \item If $e$ is an edge of the quotient graph, denote by $\wtde{e}$ any its preimage in $T$ and by $\Gamma_{\wtde{e}}$ the stabilizer of $\wtde{e}$. Let $\wtde{v'}$ and $\wtde{v''}$ be the endpoints of $\wtde{e}$. Denote by $v'$ and $v''$ their (possibly coincident) projections to the quotient graph. Consider the boundary component of the piece corresponding to $v'$, respectively $v''$, that is covered by the horosphere in $\mathcal{N}_{\wtde{v'}}$, respectively $\mathcal{N}_{\wtde{v''}}$, corresponding via the natural bijection at item (2) to the wall of $C_{\wtde{v'}}$, respectively $C_{\wtde{v''}}$, associated to the edge $\wtde{e}$. Each of these boundary components is isometric to a quotient of $\G \tonde{M}$ by a properly discontinuous cocompact action of a group isomorphic to $\Gamma_{\wtde{e}}/F$. These two boundary components are thus affinely diffeomorphic by \cite[Theorem 3.7]{Dekimpe}, and the pieces corresponding $v'$ and $v''$ are glued along this affine diffeomorphism between the two boundary components.
\end{enumerate}
\end{teo}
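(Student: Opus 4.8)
The plan is to run Bass--Serre theory for the action of $\Gamma$ on the tree $T$ and to straighten the individual pieces of this action using Schwartz's rank one rigidity \cite{SchwRank1}, with the tree isomorphism of Corollary \ref{treeIso} as the bridge between the coarse and the algebraic pictures. First I would fix the algebraic setup. Lemma \ref{QItoQIaction} gives a cobounded quasi-action $\rho$ of $\Gamma$ on $\wtde{M}$, and one checks directly that it is metrically proper: $\rho(g)$ moves a fixed basepoint a bounded amount only when $g$ lies in a bounded ball of $\Gamma$. By Corollary \ref{treeIso} this induces an action of $\Gamma$ on $T$ by graph automorphisms, which after passing to the index two subgroup of Lemma \ref{noEdgeInv} we may take to be without inversions; the quotient graph $\Theta = T/\Gamma$ is finite. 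The vertex stabiliser $\Gamma_{\wtde{v}}$ and edge stabiliser $\Gamma_{\wtde{e}}$ are finitely generated and, by the proof of Lemma \ref{geoStructStab}, quasi-act properly and coboundedly on the chamber $C_{\wtde{v}}$ and on the wall $W_{\wtde{e}}$ respectively. The fundamental theorem of Bass--Serre theory then presents $\Gamma$ as the fundamental group of the finite graph of groups over $\Theta$ with these vertex and edge groups, the structure maps being the inclusions $\Gamma_{\wtde{e}} \hookrightarrow \Gamma_{\wtde{v}}$.

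Next I would straighten each vertex group. By the discussion in Section \ref{ch-W-TW} the chamber $C_{\wtde{v}}$, with its induced length metric, is a neutered space inside the symmetric space $Y$, bi-Lipschitz to the restricted metric by Lemma \ref{chambersBiLipEmb}; thus $\Gamma_{\wtde{v}}$ quasi-acts properly and coboundedly on this neutered space. Schwartz's rigidity for neutered spaces \cite{SchwRank1} then shows that this quasi-action is quasi-conjugate to a proper cocompact isometric action of $\Gamma_{\wtde{v}}/F_{\wtde{v}}$ on a neutered space $\mathcal{N}_{\wtde{v}}$, for a finite normal subgroup $F_{\wtde{v}} \trianglelefteq \Gamma_{\wtde{v}}$; hence $\mathcal{N}_{\wtde{v}}/(\Gamma_{\wtde{v}}/F_{\wtde{v}})$ is a complete non-compact finite volume negatively curved locally symmetric orbifold, i.e. an orbifold piece, and (applying \cite{SchwRank1} to the quasi-isometry between $\mathcal{N}_{\wtde{v}}$ and $C_{\wtde{v}}$) the group $\Gamma_{\wtde{v}}/F_{\wtde{v}}$ is commensurable with, in particular shares a finite index subgroup with, the fundamental group of the piece of $M$ covered by $C_{\wtde{v}}$. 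The quasi-conjugacy is coarsely $\Gamma_{\wtde{v}}$-equivariant and, being a quasi-isometry of neutered spaces, carries the boundary horospheres of $C_{\wtde{v}}$ — which correspond to walls adjacent to $C_{\wtde{v}}$, hence to edges at $\wtde{v}$ — bijectively to the boundary horospheres of $\mathcal{N}_{\wtde{v}}$. For an edge $\wtde{e}$ at $\wtde{v}$, the image of $\Gamma_{\wtde{e}}$ in $\Gamma_{\wtde{v}}/F_{\wtde{v}}$ stabilises the boundary horosphere corresponding to $W_{\wtde{e}}$ and acts properly and coboundedly on it (using that $W_{\wtde{e}}$ is at bounded Hausdorff distance from its thin wall, a copy of $\G\tonde{M}$), so it is finite index in the full stabiliser of that horosphere, which is a cusp subgroup — an almost-crystallographic group acting on $\G\tonde{M}$; hence the image of $\Gamma_{\wtde{e}}$ is itself almost-crystallographic.

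Then I would produce the global finite normal subgroup and quotient by it. Put $F = \bigcap_{\wtde{v}} \Gamma_{\wtde{v}}$, the kernel of the action of $\Gamma$ on $T$; this is normal in $\Gamma$ and equals the intersection of all vertex and all edge stabilisers. Given $g \in F$ and any vertex $\wtde{v}_0$, the element $g$ fixes $\wtde{v}_0$ and all its neighbours, hence stabilises every wall adjacent to $C_{\wtde{v}_0}$; via the equivariant bijection above, the image of $g$ in $\Gamma_{\wtde{v}_0}/F_{\wtde{v}_0} \subseteq \mathrm{Isom}(\mathcal{N}_{\wtde{v}_0}) \subseteq \mathrm{Isom}(Y)$ fixes every boundary horosphere of $\mathcal{N}_{\wtde{v}_0}$, hence every centre of a deleted horoball in $\partial Y$; but these centres are the parabolic fixed points of the associated finite volume orbifold and so are dense in $\partial Y$, and since $\mathrm{Isom}(Y)$ acts faithfully and continuously on $\partial Y$ the image of $g$ is trivial. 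Thus $F \subseteq F_{\wtde{v}_0}$, so $F$ is finite. Replacing $\Gamma$ by $\Gamma/F$ — which changes neither the virtual isomorphism class of $\Gamma$ nor, since each $F_{\wtde{v}}/F$ is finite, the properness and cocompactness of the isometric action of $\Gamma_{\wtde{v}}/F$ on $\mathcal{N}_{\wtde{v}}$ — presents $\Gamma/F$ as the fundamental group of the graph of groups over $\Theta$ with vertex groups $\Gamma_{\wtde{v}}/F$ and edge groups $\Gamma_{\wtde{e}}/F$.

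Finally I would reassemble the orbifold. Its pieces are the $\mathcal{N}_{\wtde{v}}/(\Gamma_{\wtde{v}}/F)$, indexed by the vertices of $\Theta$; for an edge $e$ of $\Theta$ with endpoints $v$ and $w$ the common edge group $\Gamma_{\wtde{e}}/F$ realises a boundary component of the $v$-piece and one of the $w$-piece as quotients of $\G\tonde{M}$ by almost-crystallographic groups having isomorphic finite index subgroups, and Lemma \ref{isoFromAff} makes the induced identification an affine homeomorphism. Gluing the pieces along these maps, one edge at a time, produces a cusp-decomposable orbifold whose orbifold fundamental group, computed by the orbifold Van Kampen theorem, is $\Gamma/F$; together with the possible initial passage to an index two subgroup, this exhibits $\Gamma$ as virtually isomorphic to the orbifold fundamental group of a cusp-decomposable orbifold with the stated structure. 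The hard part will be the straightening of the second step: extracting from Schwartz's rank one quasi-isometric rigidity the precise statement that a proper cobounded quasi-action on a neutered space is quasi-conjugate to a proper cocompact isometric action on a (possibly different) neutered space, with control of a finite kernel and a coarsely equivariant matching of the boundary horospheres, and then checking that the induced edge group images are genuinely finite index in cusp subgroups. The remaining ingredients — the Bass--Serre bookkeeping, the density-of-parabolics argument for finiteness of $F$, and the affine gluings via Lemma \ref{isoFromAff} — should be routine by comparison.
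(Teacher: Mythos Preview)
Your overall architecture matches the paper's: pass to an index $2$ subgroup, run Bass--Serre theory, straighten each vertex group via Schwartz, and glue with Lemma \ref{isoFromAff}. The density-of-parabolic-points argument you use to show that the kernel $F$ of the $T$-action is contained in each $F_{\wtde v}$ is a nice alternative to part of the paper's reasoning. But there is a genuine gap: you establish only $F \subseteq F_{\wtde v}$, not $F_{\wtde v} \subseteq F$. Without equality, the action of $\Gamma_{\wtde v}/F$ on $\mathcal N_{\wtde v}$ is not faithful (it factors through $\Gamma_{\wtde v}/F_{\wtde v}$), so the orbifold $\mathcal N_{\wtde v}/(\Gamma_{\wtde v}/F)$ has orbifold fundamental group $\Gamma_{\wtde v}/F_{\wtde v}$, not $\Gamma_{\wtde v}/F$, and your final Van Kampen computation does not return $\Gamma/F$. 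The same defect surfaces in the gluing step: for an edge $\wtde e$ with endpoints $\wtde v$, $\wtde w$, the two boundary components you want to identify have orbifold fundamental groups $\Gamma_{\wtde e}/F_{\wtde v}$ and $\Gamma_{\wtde e}/F_{\wtde w}$, and you only know these are commensurable (``isomorphic finite index subgroups''), whereas Lemma \ref{isoFromAff} requires an actual isomorphism $\Gamma_1 \to \Gamma_2$.

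The paper closes this gap with an extra ingredient you are missing: Dekimpe's theorem that an almost-crystallographic group has no nontrivial finite normal subgroup (\cite[Theorem 4.2]{Dekimpe}). Since $F_{\wtde v} \subseteq \Gamma_{\wtde e}$ (it fixes every horosphere of $\mathcal N_{\wtde v}$, hence every edge at $\wtde v$), the image of $F_{\wtde v}$ in the almost-crystallographic group $\Gamma_{\wtde e}/F_{\wtde w}$ is a finite normal subgroup, hence trivial; thus $F_{\wtde v} \subseteq F_{\wtde w}$, and by symmetry $F_{\wtde v} = F_{\wtde w}$. Connectedness of $T$ then forces all $F_{\wtde v}$ to coincide with a single finite group, which is exactly $F$. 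Once you have this, your pieces become honest orbifold quotients with the correct fundamental groups, Lemma \ref{isoFromAff} applies directly to the isomorphic edge groups, and the Van Kampen assembly gives $\Gamma/F$ on the nose.
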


\begin{proof}
By Lemma \ref{noEdgeInv}, up to index $2$ subgroups we may assume that $\Gamma$ acts on $T$ without edge inversions. This allows us to conclude that $\Gamma$ is the fundamental group of a graph of groups whose supporting graph is the quotient of $T$ under the action of $\Gamma$, the vertex group relative to a vertex $v$ is isomorphic to the stabilizer of any preimage of $v$ in $T$ and the edge group relative to an edge $e$ is the stabilizer of any preimage of $e$ in $T$. In what follows we will give a geometric interpretation of these stabilizers.

Let $\wtde{v}$ be any vertex in the Bass-Serre tree of $M$ and $C_{\wtde{v}}$ the corresponding chamber in $\wtde{M}$. Lemmas \ref{geoStructStab} and \ref{QItoQIaction} tell us that the stabilizer $\Gamma_{\wtde{v}}$ in $\Gamma$ of ${\wtde{v}}$ has a quasi-action $\rho$ on $C_{\wtde{v}}$. %; suitable conjugates of $\Gamma_{\wtde{v}}$ have an analogous actions on chambers corresponding to the vertices in the orbit of ${\wtde{v}}$.
The paper by Schwarz \cite[\S 10.4]{SchwRank1} describes very well this quasi-action, which is as follows.

The space $C_{\wtde{v}}$ endowed with the Riemannian metric induced by $\wtde{M}$ is a neutered space in $Y$; we then consider $C_{\wtde{v}}$ as a subspace of $Y$. In $Y$ there is another neutered space $\mathcal{N}_{\wtde{v}}$ containing $C_{\wtde{v}}$ such that for every horosphere of $C_{\wtde{v}}$ there is a unique horosphere of $\mathcal{N}_{\wtde{v}}$ with the same point at infinity, and viceversa. %Now, if we first associate to a wall adjacent to $C_{\wtde{v}}$ the thin wall that separates it from $C_{\wtde{v}}$, which is a horosphere in $C_{\wtde{v}}$, and then to this horosphere of $C_{\wtde{v}}$ a horosphere of $\mathcal{N}_{\wtde{v}}$ in the way just seen, we get the bijection stated at item (2).
Furthermore there is a positive real constant $B_{\wtde{v}}$ such that
\begin{itemize}
\item Two horospheres of $C_{\wtde{v}}$ and $\mathcal{N}_{\wtde{v}}$ with the same point at infinity lie at a Hausdorff distance at most $B_{\wtde{v}}$;
\item For every $g$ in $\Gamma_{\wtde{v}}$ there is an isometry $\barr{g}$ of $\mathcal{N}_{\wtde{v}}$ such that for every $x$ in $C_{\wtde{v}}$ the inequality $d\tonde{\rho\tonde{g}\tonde{x}, \barr{g}\tonde{x}} \leq B_{\wtde{v}}$ holds.
\end{itemize}
The collection of all the isometries corresponding to elements of $\Gamma_{\wtde{v}}$ forms a group of isometries of $\mathcal{N}_{\wtde{v}}$ acting properly discontinuously and cocompactly. The elements $g$ such that $\barr{g}$ is the identity of $\mathcal{N}_{\wtde{v}}$ form \emph{the} maximal finite normal subgroup $F_{\wtde{v}}$ of $\Gamma_{\wtde{v}}$. Then the quotient $\Gamma_{\wtde{v}}/F_{\wtde{v}}$ acts by isometries on $\mathcal{N}_{\wtde{v}}$ as well. This quotient shares a finite index subgroup with the fundamental group of the piece covered by $C_{\wtde{v}}$.

Let now $\wtde{w}$ be any vertex of $T$ which is in the orbit of $\wtde{v}$ under the action of $\Gamma$. Fix an element $g_{vw}$ of $\Gamma$ that takes $\wtde{v}$ in $\wtde{w}$. It induces a quasi-isometry between the corresponding chambers $C_{\wtde{v}}$ and $C_{\wtde{w}}$. These chambers are subspaces of $Y$. From \cite[Lemma 6.1]{SchwRank1} there is a unique isometry $\barr{g_{vw}}$ of $Y$ into itself such that the image of a point of $C_{\wtde{v}}$ via $\restr{\barr{g_{vw}}}{C_{\wtde{v}}}$ lies at a bounded distance from the image of the same point via the quasi-isometry induced by $g_{vw}$. This allows us to define the neutered space $\mathcal{N}_{\wtde{w}}$ as $\barr{g_{vw}} \tonde{\mathcal{N}_{\wtde{v}}}$. The stabilizer $\Gamma_{\wtde{w}}$ of $\wtde{w}$ is the conjugate of $\Gamma_{\wtde{v}}$ via $g_{vw}$, and the isometric action of $\Gamma_{\wtde{w}}$ on $\mathcal{N}_{\wtde{w}}$ is defined via this conjugation. This construction is equivariant with respect to the bijections between horospheres in $\mathcal{N}_{\wtde{v}}$ and $C_{\wtde{v}}$ and that between horospheres in $\mathcal{N}_{\wtde{w}}$ and $C_{\wtde{w}}$ respectively.

Now we can describe edge stabilizers. Let $\wtde{e}$ be an edge of $T$. Its stabilizer $\Gamma_{\wtde{e}}$ is clearly a subgroup of every vertex stablizer $\Gamma_{\wtde{v'}}$ corresponding to an endpoint $\wtde{v'}$ of $\wtde{e}$. On the other side, by the above description a subgroup of $\Gamma_{\wtde{v'}}$ is the stabilizer of a horosphere $\mathcal{O}$ of $\mathcal{N}_{\wtde{v}}$ if and only if in the action on $T$ it is the stabilizer of the edge corresponding to the wall adjacent to $C_{\wtde{v'}}$ corresponding to $\mathcal{O}$.

If $\wtde{e}$ is an edge that has $\wtde{v'}$ as an endpoint, from the previous discussion it follows that its stabilizer $\Gamma_{\wtde{e}}$ contains the finite group $F_{\wtde{v'}}$ and that $\Gamma_{\wtde{e}}/F_{\wtde{v}}$ acts properly discontinuously, cocompactly and faithfully, but not necessarily freely, by isometries on $\G \tonde{M}$. By \cite[Theorem 4.2]{Dekimpe} $\Gamma_{\wtde{e}}/F_{\wtde{v}}$ does not have non-trivial finite normal subgroups, and thus, if $\wtde{v'}$ is the other endpoint of $\wtde{e}$, then necessarily $F_{\wtde{v'}} \subseteq F_{\wtde{v}} \subseteq \Gamma_{\wtde{e}}$. By switching the roles of ${\wtde{v}}$ and ${\wtde{v'}}$ we get that $F_{\wtde{v}} = F_{\wtde{v'}}$. From the connectedness of $T$ it follows that $\Gamma$ has a finite subgroup $F$, that is contained in the intersection of all the stabilizers of edges and vertices, and such that $F_{\wtde{v}} = \Gamma_{\wtde{v}} \cap F$. Viceversa, if an element of $\Gamma$ stabilizes all the vertices, by the above discussion it acts trivially on every $\mathcal{N}_{\wtde{v}}$ and is thus in $F$. So $F$ is precisely the intersection of all vertex stabilizers, and is then normal in $\Gamma$ because it is the kernel of the action on $T$. In particular, also $\Gamma/F$ acts on $T$.

%The group $F$ is the maximal finite normal subgroup of $\Gamma$. Indeed, if a finite normal subgroup $F'$ of $\Gamma$ was not contained in $F$, then the subgroup $FF'$ generated by $F$ and $F'$ would strictly contain $F$ and would still be finite and normal. The group $\Gamma$ is generated by vertex stabilizers, hence there would exist a vertex $\wtde{v}$ such that $F_{\wtde{v}} = F \cap \Gamma_{\wtde{v}} \subsetneq FF' \cap \Gamma_{\wtde{v}}$, but this contradicts the part of result by Schwartz that ensures that $F_{\wtde{v}}$ is the maximal finite normal subgroup of $\Gamma_{\wtde{v}}$.

Now we want to prove that $\Gamma/F$ is the orbifold fundamental group of a cusp decomposable orbifold as in the statement of the Theorem. We already described the pieces and now we describe the gluings. Let $e$ be an edge of the quotient graph of $T$ by the action of $\Gamma$. Choose a preimage $\wtde{e}$ of $e$ in $T$. Let $\Gamma_{\wtde{e}}$ the stabilizer of $\wtde{e}$, and let $\wtde{v'}$ and $\wtde{v''}$ be the endpoints of $\wtde{e}$. Denote by $v'$ and $v''$ their (possibly coincident) projections to the quotient graph. Consider the boundary component of the piece corresponding to $v'$, respectively $v''$, that is covered by the horosphere in $\mathcal{N}_{\wtde{v'}}$, respectively $\mathcal{N}_{\wtde{v''}}$, % corresponding via the natural bijection at item (2) to the wall of $C_{\wtde{v'}}$, respectively $C_{\wtde{v''}}$,
associated to the edge $\wtde{e}$. Each of these horospheres is isometric to $\G \tonde{M}$ with some left-invariant Riemannian metric. The two boundary components are quotients of the relative horospheres by a properly discontinuous cocompact isometric action of $\Gamma_{\wtde{e}}/F$. We choose an affine homeomorphism between the two horospheres that conjugates the actions of $\Gamma_{\wtde{e}}/F$ on them as in Lemma \ref{isoFromAff}, and glue the pieces corresponding to $v'$ and $v''$ along the projection of this affine homeomorphism to the boundary components.

% The boundary orbifolds corresponding to edges of the quotient graph are isometric to quotients of the simply connected nilpotent Lie group $\G \tonde{Y}$ by the isometric action of edge stabilizers.  whose orbifold fundamental groups are edge stabilizers, obtaining a cusp-decomposable orbifold $O$. FINIRE Its orbifold fundamental group $\pi_{1,orb} \tonde{O}$ is the fundamental group of the graph of groups having orbifold fundamental groups of piece orbifolds as vertex groups and suitable groups of isometries of $\G \tonde{M}$ as edge groups. %Furthermore, by construction we have the following group extension
% \[ 1 \to F \to \Gamma \to \pi_{1,orb} \tonde{O} \to 1 \]
% with $F$ finite. This completes the proof.
\end{proof}

One can easily adapt the arguments of this work to prove the following

\begin{teo} \label{corMain}
The orbifold fundamental groups of cusp-decomposable orbifolds form a quasi-isometrically rigid class, i.e. every finitely generated group quasi-isometric to an orbifold fundamental group of a cusp-decomposable orbifold is actually virtually isomorphic to the fundamental group of a, maybe different, cusp-decomposable orbifold.
\end{teo}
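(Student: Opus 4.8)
The plan is to re-run the development of Sections~2 and 3 with orbifold pieces in place of manifold pieces, checking that every structural ingredient used along the way survives the passage, and then to replay the proof of Theorem~\ref{main} essentially verbatim, observing that that proof already produces a cusp-decomposable \emph{orbifold} (never using that its chambers cover manifold pieces rather than orbifold pieces). First I would fix the model space. A cusp-decomposable orbifold $\mathcal{O}$ is a good orbifold, since it is assembled from complete non-compact finite volume negatively curved locally symmetric orbifolds, which are good, glued along affine homeomorphisms; hence its orbifold universal cover $\wtde{\mathcal{O}}$ is a \emph{manifold}, it carries a piecewise Riemannian metric built exactly as at the end of Section~2.1, and by the Milnor-\v{S}varc Lemma $\pi_{1,orb} \tonde{\mathcal{O}}$ is quasi-isometric to $\wtde{\mathcal{O}}$. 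The preimages in $\wtde{\mathcal{O}}$ of the orbifold pieces decompose into chambers and walls precisely as in Section~2.2: since the orbifold universal cover of an orbifold piece $Y'/\Lambda$ is the neutered space on which $\Lambda$ acts, each chamber is again an honest neutered space in $Y$, each wall is bi-Lipschitz to $\G \times$ an interval, and each thin wall is a horosphere.

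With this in hand, I would observe that Lemmas~\ref{finIsoClasses} and \ref{DWDTW}, Corollary~\ref{locFinWalls}, Lemma~\ref{biLipWalls}, Lemma~\ref{connComplement}, and the relative hyperbolicity statement all hold with their original proofs. The acylindricity and full quasi-convexity arguments use only that a parabolic isometry of a neutered space fixes exactly one horosphere, which is true regardless of freeness of the action; the vertex groups are relatively hyperbolic by \cite[Theorem 5.1]{FarbRHG} because the orbifold fundamental group of a locally symmetric piece still acts as a convergence group on $\partial Y$; and the edge groups are almost-crystallographic, to which \cite{Dekimpe} (and hence Lemma~\ref{isoFromAff}) already applies. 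Consequently $\pi_{1,orb} \tonde{\mathcal{O}}$ is hyperbolic relative to its cusp subgroups and every asymptotic cone of $\wtde{\mathcal{O}}$ is tree-graded with respect to its asymptotic walls, which are bi-Lipschitz to a nilpotent Lie group with a Carnot-Caratheodory metric.

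Next I would reprove the ``walls go near walls, chambers go near chambers'' package --- Lemmas~\ref{AsWallsInAsWalls}, \ref{wallsGoNearWalls}, \ref{wallsOfSameChamber}, \ref{wallsInSameChamberRemainInSameChamber}, Theorem~\ref{chambersQuasiPreserved} and Corollary~\ref{treeIso} --- word for word, since their proofs use only the tree-graded structure of asymptotic cones, the topological and Hausdorff dimension invariants of Lemma~\ref{DistNilpGroups}, invariance of domain, the path-connectedness statement of Lemma~\ref{connComplement}, and the bi-Lipschitz embeddings of walls and chambers, none of which see the orbifold structure. Then, given a finitely generated $\Gamma$ quasi-isometric to $\pi_{1,orb} \tonde{\mathcal{O}}$, hence to $\wtde{\mathcal{O}}$, I would run the proof of Theorem~\ref{main} unchanged: Lemma~\ref{QItoQIaction} gives a cobounded quasi-action of $\Gamma$ on $\wtde{\mathcal{O}}$, which by Corollary~\ref{treeIso} induces an action on the Bass-Serre tree $T$; passing to an index-two subgroup removes edge inversions; there are finitely many orbits of vertices and edges; vertex stabilizers quasi-act on chambers, and by Schwartz \cite[\S 10.4]{SchwRank1} --- whose statement is already for locally symmetric \emph{orbifolds} --- each vertex stabilizer modulo its maximal finite normal subgroup acts properly discontinuously, cocompactly and isometrically on a neutered space $\mathcal{N}_{\wtde{v}}$; these finite normal subgroups coincide along edges, yielding a single finite normal $F$ in $\Gamma$; and $\Gamma/F$ is reconstructed as the orbifold fundamental group of the cusp-decomposable orbifold obtained by gluing the quotients $\mathcal{N}_{\wtde{v}}/\tonde{\Gamma_{\wtde{v}}/F}$ along affine homeomorphisms supplied by Lemma~\ref{isoFromAff}. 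Since the output of this procedure is a cusp-decomposable orbifold, $\Gamma$ is virtually isomorphic to the orbifold fundamental group of a cusp-decomposable orbifold.

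I expect the only genuinely new point requiring care to be the first one: verifying that $\wtde{\mathcal{O}}$ is a manifold and that the thin-thick and chamber-wall decomposition of $\mathcal{O}$ lifts to a decomposition of $\wtde{\mathcal{O}}$ into honest neutered spaces and walls, so that the metric machinery of Section~2 applies literally and not merely in spirit; once this is settled, everything else is a transcription --- with $\pi_1$ replaced by $\pi_{1,orb}$ and ``locally symmetric manifold piece'' by ``locally symmetric orbifold piece'' --- of arguments already present in the paper. It is also worth noting that, since fundamental groups of cusp-decomposable manifolds are among the orbifold fundamental groups of cusp-decomposable orbifolds, Theorem~\ref{corMain} contains Theorem~\ref{main} as the special case in which $\mathcal{O}$ is a manifold.
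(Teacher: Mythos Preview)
Your proposal is correct and is precisely the approach the paper intends: the paper's own ``proof'' of Theorem~\ref{corMain} is the single sentence ``One can easily adapt the arguments of this work to prove the following,'' and what you have written is a careful expansion of that sentence, checking that each structural ingredient (goodness of the orbifold, neutered-space chambers, relative hyperbolicity via Dahmani, Schwartz's rigidity, Dekimpe's results on almost-crystallographic groups) already covers the orbifold case. Your observation that the output of Theorem~\ref{main} is already a cusp-decomposable orbifold, so that the final step requires no modification, is exactly the point.
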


% The work by Schwartz \cite{SchwRank1} we relied on proves a stronger result for pieces: a group quasi-isometric to the orbifold fundamental group of a complete non-compact finite volume negatively curved locally symmetric orbifold is actually virtually isomorphic to it. So a natural question is the following:
% 
% \begin{oq}
% Is a group quasi-isometric to the fundamental group of a cusp-decomposable manifold, or the orbifold fundamental group of a cusp-decomposable orbifold, actually virtually isomorphic to it?
% \end{oq}
% 
% Another two questions are related to how much the class of orbifold fundamental groups of cusp-decomposable orbifolds is larger than the class of fundamental groups of cusp-decomposable manifolds.
% 
% \begin{oq}
% Is every orbifold fundamental group of a cusp-decomposable orbifold virtually isomorphic, or at least quasi-isometric, to the fundamental group of a cusp-decomposable manifold?
% \end{oq}
% 
% An affirmative answer would follow from the affirmative answer to the following question, more geometric in nature.
% 
% \begin{oq}
% Does a cusp-decomposable orbifold necessarily have a finite cover which is a cusp-decomposable manifold?
% \end{oq}

\addcontentsline{toc}{section}{Bibliography}
\bibliographystyle{alpha}
\bibliography{Dottorato}

\newcommand{\etalchar}[1]{$^{#1}$}
\begin{thebibliography}{BDM09}

\bibitem[BDM09]{BDM}
Jason Behrstock, Cornelia Dru{\c{t}}u, and Lee Mosher.
\newblock Thick metric spaces, relative hyperbolicity, and quasi-isometric
  rigidity.
\newblock {\em Mathematische Annalen}, 344(3):543--595, 2009.

\bibitem[BGS85]{BGS}
Werner Ballmann, Mikhael Gromov, and Viktor Schroeder.
\newblock {\em Manifolds of Nonpositive Curvature}.
\newblock Birkhaeuser, Boston, 1985.

\bibitem[BH99]{BrHa}
Martin~Robert Bridson and Andr\'e Haefliger.
\newblock {\em Metric Spaces of Non-Positive Curvature}, volume 319 of {\em
  Grundlehren der Mathematischen Wissenschaften}.
\newblock Springer-Verlag, 1999.

\bibitem[Dah03]{DahmaniCombConvGrps}
Fran{\c{c}}ois Dahmani.
\newblock Combination of convergence groups.
\newblock {\em Geometry \& Topology}, 7(2):933--963, 2003.

\bibitem[Dek16]{Dekimpe}
Karel Dekimpe.
\newblock A users' guide to infra-nilmanifolds and almost-{B}ieberbach groups.
\newblock {\em arXiv preprint arXiv:1603.07654}, 2016.

\bibitem[DK15]{DKlecGGT}
Cornelia Drutu and M~Kapovich.
\newblock Lectures on geometric group theory.
\newblock {\em URL https://www. math. ucdavis. edu/\~{}
  kapovich/EPR/kapovich\_drutu. pdf.(as of 29/04/2015) Book in preparation},
  2015.

\bibitem[DS{\etalchar{+}}05]{DrutuSapir}
Cornelia Dru{\c{t}}u, Mark Sapir, et~al.
\newblock Tree-graded spaces and asymptotic cones of groups.
\newblock {\em Topology}, 44(5):959--1058, 2005.

\bibitem[Far98]{FarbRHG}
Benson Farb.
\newblock Relatively hyperbolic groups.
\newblock {\em Geometric and functional analysis}, 8(5):810--840, 1998.

\bibitem[FLS15]{FLSgraph}
Roberto Frigerio, Jean-Fran{\c{c}}ois Lafont, and Alessandro Sisto.
\newblock Rigidity of high dimensional graph manifolds.
\newblock {\em Astérisque}, 372, 2015.

\bibitem[Fri16]{FrigSurvey}
Roberto Frigerio.
\newblock Quasi-isometric rigidity of piecewise geometric manifolds.
\newblock {\em arXiv preprint arXiv:1602.02603v2}, 2016.

\bibitem[Gro81]{GroNilp}
Michael Gromov.
\newblock Groups of polynomial growth and expanding maps (with an appendix by
  {Jacques Tits}).
\newblock {\em Publications Math{\'e}matiques de l'Institut des Hautes
  {\'E}tudes Scientifiques}, 53:53--78, 1981.

\bibitem[KL97]{KaLeeb3manQI}
Michael Kapovich and Bernhard Leeb.
\newblock Quasi-isometries preserve the geometric decomposition of {Haken}
  manifolds.
\newblock {\em Inventiones mathematicae}, 128(2):393--416, 1997.

\bibitem[M{\etalchar{+}}85]{MitchellCC}
John Mitchell et~al.
\newblock On {Carnot-Carath{\'e}odory} metrics.
\newblock {\em Journal of Differential Geometry}, 21(1):35--45, 1985.

\bibitem[Pan83]{PansuCroissance}
Pierre Pansu.
\newblock Croissance des boules et des g{\'e}od{\'e}siques ferm{\'e}es dans les
  nilvari{\'e}t{\'e}s.
\newblock {\em Ergodic Theory and Dynamical Systems}, 3(03):415--445, 1983.

\bibitem[Pha12]{NPcusp}
T.~T\^{a}m~Nguy\^{e}n Phan.
\newblock Smooth (non)rigidity of cusp-decomposable manifolds.
\newblock {\em Commentarii Mathematici Helvetici}, 87(4):789--804, 2012.

\bibitem[Sch95]{SchwRank1}
Richard~Evan Schwartz.
\newblock The quasi-isometry classification of rank one lattices.
\newblock {\em Publications Math{\'e}matiques de l'Institut des Hautes
  {\'E}tudes Scientifiques}, 82(1):133--168, 1995.

\end{thebibliography}

\end{document}